\def\version{26/12/2022 -- version 3
\hfill\href{https://arxiv.org/abs/2212.00437}{arXiv:2212.00437}
}

\documentclass[11pt,a4paper]{amsart}

\usepackage{fullpage}

\usepackage{amssymb,amscd,amsxtra}
\usepackage{autobreak}

\usepackage[alphabetic,lite]{amsrefs}

\usepackage[autobold]{mathfixs}

\usepackage[scr,scaled=1.1]{rsfso}

\usepackage{leftidx}

\usepackage[clockwise]{rotating}

\usepackage[hypertexnames=false]{hyperref}

\usepackage{xkvltxp}
\usepackage[nomargin,inline]{fixme}
\fxusetheme{color}
\definecolor{fxnote}{rgb}{0.0000,0.6000,0.0000}

\usepackage{mathtools}
\usepackage{relsize}

\usepackage{tikz}
\usetikzlibrary{arrows,calc}
\usetikzlibrary{commutative-diagrams}
\usetikzlibrary{cd}
\usepackage{tikz-network}

\usepackage[all,poly,web,color,matrix,arrow]{xy}
\newdir{ >}{{}*!/-8pt/@{>}}

\usepackage{braket}

\usepackage{pigpen}

\renewcommand{\thefootnote}{\fnsymbol{footnote}}
\long\def\symbolfootnote[#1]#2{\begingroup%
\def\thefootnote{\fnsymbol{footnote}}\footnote[#1]{#2}\endgroup}

\newtheorem{thm}{Theorem}[section]
\newtheorem{lem}[thm]{Lemma}
\newtheorem{prop}[thm]{Proposition}
\newtheorem{cor}[thm]{Corollary}

\theoremstyle{definition}
\newtheorem{rem}[thm]{Remark}
\newtheorem*{rem*}{Remark}
\newtheorem{defn}[thm]{Definition}

\newtheorem{assump}[thm]{Assumption}

\newtheorem*{notation}{Notation}

\numberwithin{equation}{section}

\hyphenation{hom-o-l-ogy co-hom-o-l-ogy Hoch-sch-ild}

\def\ds{\displaystyle}
\def\:{\colon}
\def\.{\cdot}
\def\o{\circ}
\def\<{\left\langle}
\def\>{\right\rangle}
\def\({\left(}
\def\){\right)}

\def\epsilon{\varepsilon}
\def\phi{\varphi}

\def\subset{\subseteq}

\def\leq{\leqslant}
\def\geq{\geqslant}
\def\lla{\longleftarrow}

\def\Lra{\Longrightarrow}
\def\IFF{\Longleftrightarrow}

\def\bar#1{\overline{#1}}

\def\tilde#1{\widetilde{#1}}
\def\iso{\cong}

\DeclareMathOperator{\Char}{char}

\DeclareMathOperator{\Id}{Id}

\DeclareMathOperator{\rank}{rank}
\DeclareMathOperator{\coker}{coker}

\def\k{\Bbbk}

\def\Z{\mathbb{Z}}

\DeclareMathOperator{\Coext}{Coext}

\DeclareMathOperator{\End}{End}
\DeclareMathOperator{\Ext}{Ext}

\DeclareMathOperator{\Cohom}{Cohom}
\DeclareMathOperator{\Hom}{Hom}

\DeclareMathOperator{\Map}{Map}

\DeclareMathOperator*{\colim}{colim}

\def\Mod{\mathbf{Mod}}
\def\Stmod{\mathbf{Stmod}}

\DeclareMathOperator{\ann}{ann}
\def\annl{\ann^{\mathrm{l}}}

\def\dlQ{\mathrm{Q}}

\def\QS0{\dlQ S^0}
\def\QSo0{\dlQ_0S^0}

\DeclareMathOperator{\rad}{rad}

\def\op{\mathrm{op}}
\def\sint{{\smallint}}

\DeclareMathOperator{\coind}{coind}
\DeclareMathOperator{\ind}{ind}

\DeclareMathOperator{\cohdim}{coh-dim}
\DeclareMathOperator{\cohrk}{coh-rank}
\def\lint{\sint^{\mathrm{l}}}
\def\rint{\sint^{\mathrm{r}}}
\def\Comod{\mathbf{Comod}}

\def\fd{f.d}
\def\fg{f.g}
\def\fp{f.p}
\def\fr{f.r}
\def\pfr{p.f.r}
\def\lf{l.f}


\title
[Locally Frobenius algebras and Hopf algebras]
{Locally Frobenius algebras and Hopf algebras}
\author{Andrew Baker}
\date{this version \version}
\address{
School of Mathematics \& Statistics,
University of Glasgow, Glasgow G12~8QQ, Scotland.}
\email{andrew.j.baker@glasgow.ac.uk}
\urladdr{http://www.maths.gla.ac.uk/$\sim$ajb}
\thanks{
I would like to thank the following: The
Max-Planck-Institut f\"ur Mathematik in
Bonn for supporting my visit during April
and May 2022; Scott Balchin, Tobias Barthel,
Ken Brown, Bob Bruner, John Rognes and
Chuck Weibel for sharing their mathematical
knowledge and insights. This project really
took off during the first Covid-19 lock-down
in the Spring of 2020, the social isolation
in that strange period at least proved
conducive to mathematical research}
\keywords{Frobenius algebra, Hopf algebra, stable module category}
\subjclass[2020]{Primary 16T05; Secondary 16S99, 57T05}

\begin{document}

\begin{abstract}
We develop a theory of \emph{locally Frobenius
algebras} which are colimits of certain directed
systems of Frobenius algebras. A major goal is
to obtain analogues of the work of Moore \&
Peterson and Margolis on \emph{nearly Frobenius
algebras} and \emph{$P$-algebras} which was
applied to graded Hopf algebras such as the
Steenrod algebra for a prime.

Such locally Frobenius algebras are coherent
and in studying their modules we are naturally
led to focus on coherent and finite dimensional
modules. Indeed, the category of coherent modules
over locally Frobenius algebra~$A$ is abelian
with enough projectives and injectives since~$A$
is injective relative to the coherent modules;
however it only has finite limits and colimits.
The finite dimensional modules also form an
abelian category but finite dimensional modules
are never coherent. The minimal ideals of a
locally Frobenius algebra are precisely the
ones which are isomorphic to coherent simple
modules; in particular it does not contain
a copy of any finite dimensional simple module
so it is not a Kasch algebra. We discuss possible
versions of stable module categories for such
algebras. We also discuss possible monoidal
structures on module categories of a locally
Frobenius Hopf algebra: for example tensor
products of coherent modules turn out to be
pseudo-coherent.

Examples of locally Frobenius Hopf algebras
include group algebras of locally finite
groups, already intensively studied in the
literature.
\end{abstract}

\maketitle

\tableofcontents

\section*{Introduction}

The aim of the paper is to develop a non-graded
version of the theories of nearly \emph{Frobenius
algebras} and \emph{$P$-algebras} introduced
half a century ago by Moore \& Peterson, and
Margolis~\cites{JCM&FPP:NearlyFrobAlg,HRM:Book,AB:Palgebras};
these were motivated by topological applications
involving the Steenrod algebra for a prime number.

In these graded connected versions, the properties
obtained for the algebras and their modules are
reminiscent of properties of Poincar\'e duality
algebras (the graded equivalent of Frobenius
algebras) and we are able to prove similar results.
However there are some difficulties which are
overcome in the graded context by concentrating
on bounded below modules, and it is not clear
how to obtain analogous results in our setting.

Our most complete results involve coherent
modules over \emph{locally Frobenius algebras}
which are themselves coherent rings, although
some results on finite dimensional modules
are also obtained.

One motivation for setting up this theory is
to introduce stable module categories for such
algebras and we discuss options for doing this,
exploiting the fact that a locally Frobenius
algebra is self injective at least relative to
coherent modules and in some cases also relative
to all finite dimensional modules.

Instead of indexing a family of subalgebras
on natural numbers as in the graded theory,
we use a family of augmented Frobenius
subalgebras indexed on a directed set; this
allows us to include examples such as group
algebras of locally finite groups in our
theory. When the indexing is countable this
has implications for vanishing of derived
functors of limits taken over the indexing
set but otherwise we do not make use of its
cardinality.

We give some general examples of locally
Frobenius algebras, but we leave detailed
exploration of examples to future work. The
special case of group algebras of locally
finite groups and its literature was drawn
to my attention by my Glasgow colleague
Ken Brown.

\bigskip
\noindent
\textbf{Notation \& conventions:} Throughout,
$\k$ will denote a field of characteristic~$p\geq0$.
All rings, algebras, modules and their homomorphisms
will be unital.

A \emph{directed set} $(\Lambda,\preccurlyeq)$
will mean a filtered partially ordered set, i.e.,
every pair (or finite set) of elements of $\Lambda$
has an upper bound. A system of objects and morphisms
in a category indexed on such a $(\Lambda,\preccurlyeq)$
will be referred to as a
\emph{$(\Lambda,\preccurlyeq)$-filtered system}
or \emph{$\Lambda$-filtered system}.

\section{Recollections on rings and modules}
\label{sec:Rings-Modules}

We will require results on projective, injective
and flat modules over Noetherian and coherent
rings which are thoroughly covered in
Lam~\cite{TYL:LectModules&Rings}.

The following definitions are standard except
for projectively finitely related.
\begin{defn}\label{defn:fg-fr-fp}
Let $R$ be a ring and $M$ a left/right
$R$-module.
\begin{itemize}
\item
$M$ is \emph{finitely generated} (\fg.)
if there is an exact sequence
\[
F\to M\to 0
\]
with $F$ finitely generated and free (or
equivalently \fg. projective).
\item
$M$ is \emph{finitely related} (\fr.) if
there is a short exact sequence
\[
0\to K\to F\to M\to 0
\]
with $K$ finitely generated and~$F$ free.
\item
$M$ is \emph{projectively finitely related}
(\pfr.) if there is a short exact sequence
\[
0\to K\to P\to M\to 0
\]
with $K$ finitely generated and $P$ projective.
\item
$M$ is \emph{finitely presented} (\fp.)
if there is a short exact sequence
\[
0\to K\to F\to M\to 0
\]
with $K$ \fg. and $F$ \fg. and free (or
equivalently \fg. projective).
\end{itemize}
\end{defn}

The notion of \pfr. is of course redundant
for local rings since every projective module
is free by a celebrated result of
Kaplansky~\cite{IK:ProjMod}.

The next result is a basic tool used in arguments
about such conditions, see for example
Lam~\cite{TYL:LectModules&Rings}*{lemma~(5.1)}.
\begin{lem}[Schanuel's Lemma]\label{lem:Schanuel}
Let $M$ be an $R$-module. Suppose that there
are short exact sequences
\[
0\to K\to P\to M\to 0,\quad 0\to L\to Q\to M\to 0
\]
where~$P$ is projective. Then there is a short
exact sequence of the form
\[
\xymatrix{
0\ar[r] & K\ar[r] & L\oplus P\ar[r] & Q\ar[r] & 0
}
\]
and moreover, if $Q$ is also projective,
\[
P\oplus L \iso K\oplus Q.
\]
\end{lem}
\begin{cor}\label{cor:Schanuel}
Suppose that~$M$ is \fp. and that there is
a short exact sequence
\[
0\to L\to Q\to M\to 0
\]
where $Q$ is \fg., then so is~$L$.
\end{cor}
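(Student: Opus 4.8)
The plan is to reduce everything to a single application of Schanuel's Lemma (Lemma~\ref{lem:Schanuel}), using the hypothesis that $M$ is \fp. to manufacture a second resolution of $M$ whose middle term is finitely generated \emph{and} projective.

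First I would invoke finite presentation of $M$: by Definition~\ref{defn:fg-fr-fp} there is a short exact sequence
\[
0\to K\to F\to M\to 0
\]
with $F$ finitely generated and free (in particular projective) and $K$ finitely generated. Together with the given sequence $0\to L\to Q\to M\to 0$ this puts us in exactly the situation of Schanuel's Lemma, with $F$ playing the role of the distinguished projective module.

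Applying Lemma~\ref{lem:Schanuel} then yields a short exact sequence
\[
0\to K\to L\oplus F\to Q\to 0.
\]
The point of choosing $F$ (rather than $Q$) as the projective term is that this places $L$ inside the middle of the sequence: since $K$ and $Q$ are both finitely generated, the middle term $L\oplus F$ is finitely generated, being an extension of a finitely generated module by a finitely generated one. Finally, $L$ is a direct summand, hence a quotient, of $L\oplus F$, so $L$ is itself finitely generated, as required.

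The argument is essentially formal once the two sequences are aligned; the only point requiring a little care is the orientation of Schanuel's Lemma, namely ensuring that the projective module $F$ (and not the merely finitely generated $Q$) is fed in as the distinguished projective term, so that $L$ emerges as a summand of the finitely generated middle object rather than being trapped as a kernel. No genuine obstacle arises beyond this bookkeeping.
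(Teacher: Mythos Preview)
Your argument is correct and is exactly the intended one: the paper states this result as an immediate corollary of Lemma~\ref{lem:Schanuel} without giving a proof, and your application of that lemma with $P=F$ projective (yielding $0\to K\to L\oplus F\to Q\to 0$, whence $L\oplus F$ and then $L$ are \fg.) is precisely the routine deduction left to the reader.
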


Using Schanuel's Lemma, it is easy to see
that $\text{(\fg. \& \fr.)}\IFF\text{\fp.}$,
so we use these descriptions interchangeably
as well as using \fg. projective modules in
place of \fg. free modules.

The first two parts of the following result
are taken from
\cite{TYL:LectModules&Rings}*{chapter~2\S4}.
\begin{thm}\label{thm:Lam}
Let $R$ be a ring and $M$ a left/right
$R$-module. \\
\emph{(a)} $M$ is \fr. if and only if
$M \iso M_0 \oplus F$ where~$M_0$ is
\fp. and~$F$ is free. \\
\emph{(b)} If $M$ is \fr. then it is
flat if and only if it is projective. \\
\emph{(c)} $M$ is \pfr. if and only if
$M\oplus F' \iso M_0 \oplus F''$ where~$M_0$
is \fp. and~$F',F''$ are free.
\end{thm}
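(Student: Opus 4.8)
The plan is to establish (a) first, since its decomposition drives the other two parts. The reverse implication of (a) is routine: given $M\iso M_0\oplus F$ with $M_0$ \fp. and $F$ free, I would splice a finite presentation $0\to K_0\to G_0\to M_0\to 0$ (with $K_0$ \fg. and $G_0$ \fg. free) with the identity sequence on $F$ to obtain $0\to K_0\to G_0\oplus F\to M\to 0$, so $M$ is \fr. For the forward implication I would begin with a presentation $0\to K\to F\to M\to 0$ having $K$ \fg. and $F$ free. Because $K$ is \fg., its finitely many generators involve only finitely many basis elements of $F$, so $K$ lies in a \fg. free direct summand $F_0$, giving $F=F_0\oplus F_1$ with $F_1$ free and $K\subseteq F_0$. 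Then $M=F/K\iso (F_0/K)\oplus F_1$, where $F_0/K$ is \fp. and $F_1$ is free, as required.

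Part (b) follows quickly from (a): projective modules are always flat, and for the converse I would write $M\iso M_0\oplus F$ as in (a). Since a direct summand of a flat module is flat, $M_0$ is flat; being both \fp. and flat it is projective, and hence $M\iso M_0\oplus F$ is projective.

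For the new part (c) I would isolate the key equivalence that $M$ is \pfr. if and only if $M\oplus Q$ is \fr. for some projective $Q$. Forwards, from $0\to K\to P\to M\to 0$ with $K$ \fg. and $P$ projective, choose $Q$ with $P\oplus Q$ free and add the identity on $Q$ to obtain $0\to K\to P\oplus Q\to M\oplus Q\to 0$, exhibiting $M\oplus Q$ as \fr. Backwards, given $0\to K\to F\to M\oplus Q\to 0$ with $F$ free and $K$ \fg., I would pull this sequence back along the summand inclusion $M\hookrightarrow M\oplus Q$ to produce $0\to K\to P\to M\to 0$; since $F/P\iso Q$ is projective, the sequence $0\to P\to F\to Q\to 0$ splits, so $P$ is a summand of $F$ and hence projective, making $M$ \pfr.

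Combining this equivalence with (a), $M$ is \pfr. precisely when $M\oplus Q\iso M_0\oplus F$ for some projective $Q$, free $F$, and \fp. $M_0$. To reach the stated form, with free modules on both sides, I would absorb $Q$ by an Eilenberg swindle: picking $Q'$ with $G=Q\oplus Q'$ free and setting $G^{(\infty)}=\bigoplus_{n\in\N}G$, one has $Q\oplus G^{(\infty)}\iso G^{(\infty)}$, so adding $G^{(\infty)}$ to both sides turns the summand $Q$ on the left into the free module $G^{(\infty)}$ and leaves the free module $F\oplus G^{(\infty)}$ on the right, giving $M\oplus F'\iso M_0\oplus F''$ with $F',F''$ free. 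The reverse implication of (c) is then immediate: if $M\oplus F'\iso M_0\oplus F''$ with $F',F''$ free and $M_0$ \fp., then $M_0\oplus F''$ is \fr. by (a), hence so is $M\oplus F'$, and the equivalence above with $Q=F'$ yields that $M$ is \pfr. The main obstacle is the bookkeeping in (c): verifying that the pulled-back sequence genuinely splits, and converting a possibly infinitely generated projective summand into a free one without any finiteness hypotheses, which the swindle handles cleanly.
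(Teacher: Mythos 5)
Your argument is correct, and for part (c) --- the only part the paper actually proves rather than cites --- it is essentially the paper's argument: an Eilenberg swindle converts the projective summand into a free one so that part (a) applies, with your version additionally writing out the converse implication (via the pullback/splitting argument) that the paper leaves implicit. For (a) and (b) you supply the standard proofs that the paper delegates to Lam; the only point worth flagging is that your proof of (b) silently rests on the nontrivial fact that a finitely presented flat module is projective, which is precisely the content of the theorem of Lam being cited, so that step is a reduction rather than a proof from scratch.
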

\begin{proof}
(a) See~\cite{TYL:LectModules&Rings}*{theorem~2.4.26(c)}. \\
(b) See~\cite{TYL:LectModules&Rings}*{theorem~2.4.30}. \\
(c) If $M$ is \pfr. there is a short exact sequence
\[
0\to K\to P\to M\to 0
\]
with $K$ finitely generated and $P$ projective.
Choose a projective module so that $F=Q\oplus P$
is free and write $\ds F^\infty = \bigoplus_{i\in\mathbb{N}}F$.
By the Eilenberg swindle,
\[
P\oplus F^\infty \iso F^\infty
\]
so there is an exact sequence
\[
0\to K\to F^\infty\to M\oplus F^\infty\to 0.
\]
Therefore $M\oplus F^\infty$ is \fr. and
the result follows from~(a).
\end{proof}

Part (c) can be rephrased as saying that
saying that being \pfr. is equivalent to
being \emph{stably \fp.}, or \emph{stably
coherent} when the ring itself is coherent.

We also recall the characterisation of
flat modules provided by the theorem of
Lazard and
Govorov~\cite{TYL:LectModules&Rings}*{theorem~4.34}.
\begin{thm}[Lazard-Govorov theorem]
\label{thm:Lazard-Govorov}
Let $R$ be a ring and $M$ an $R$-module.
Then~$M$ is flat if and only if it is
a filtered colimit of \fg. free modules.
\end{thm}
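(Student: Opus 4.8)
The plan is to establish the two implications separately, disposing of the easy direction in a line and concentrating effort on the construction needed for the converse. For the implication that a filtered colimit of \fg. free modules is flat, I would argue as follows: every \fg. free module is flat, the functor $N\otimes_R(-)$ commutes with all colimits, and filtered colimits of $R$-modules are exact. Hence if $M\iso\colim_\alpha M_\alpha$ with each $M_\alpha$ flat, then $M\otimes_R(-)\iso\colim_\alpha(M_\alpha\otimes_R(-))$ is a filtered colimit of exact functors and therefore exact, so $M$ is flat.

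For the substantial implication, that a flat module $M$ is such a colimit, I would introduce the indexing category $\mathcal{I}$ whose objects are pairs $(F,f)$ with $F$ a \fg. free module and $f\colon F\to M$ an $R$-homomorphism, and whose morphisms $(F,f)\to(F',f')$ are the $R$-linear maps $g\colon F\to F'$ with $f'g=f$. Sending $(F,f)$ to $F$ defines a functor on $\mathcal{I}$ whose colimit carries a canonical map $\theta\colon\colim_{\mathcal{I}}F\to M$, and the whole proof reduces to showing that $\mathcal{I}$ is filtered and that $\theta$ is an isomorphism.

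The crux, and the only place flatness enters, is checking that $\mathcal{I}$ is filtered. Nonemptiness and upper bounds for pairs are formal, since $(F_1\oplus F_2,\,f_1\oplus f_2)$ with its two inclusions dominates $(F_1,f_1)$ and $(F_2,f_2)$. The real content is the coequalizing condition: given parallel morphisms $g,h\colon(F_1,f_1)\to(F_2,f_2)$, I must find $k\colon(F_2,f_2)\to(F_3,f_3)$ with $kg=kh$. Writing $d=g-h$, the equalities $f_2g=f_1=f_2h$ give $f_2d=0$; if $x_1,\dots,x_m\in M$ are the images under $f_2$ of a basis of $F_2$ and $(d_{ij})$ is the matrix of $d$, this says $\sum_i d_{ij}x_i=0$ for each $j$, a finite system of relations in $M$. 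Here I would invoke the \emph{equational criterion for flatness} (available in Lam's book, and in essence equivalent to the theorem being proved): flatness of $M$ yields elements $y_1,\dots,y_p\in M$ and scalars $(b_{ik})$ with $x_i=\sum_k b_{ik}y_k$ and $\sum_i d_{ij}b_{ik}=0$ for all $j,k$. Taking $F_3=R^p$, letting $f_3$ send the basis to the $y_k$, and letting $k$ have matrix $(b_{ik})$, the first family of equations gives $f_3k=f_2$ and the second gives $kd=0$, hence $kg=kh$. I expect this step to be the main obstacle, between pinning down the correct form of the equational criterion and keeping the matrix bookkeeping straight.

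With $\mathcal{I}$ shown to be filtered, it remains to see that $\theta$ is an isomorphism. Surjectivity is immediate, as any $x\in M$ is the image of the generator under some $(R,f)$ with $f(1)=x$. For injectivity I would use the coequalizing condition once more: a class in $\colim_{\mathcal{I}}F$ killed by $\theta$ is represented by some $x\in F$ with $f(x)=0$, and applying the coequalizing condition to the two morphisms $(R,0)\to(F,f)$ sending $1$ to $x$ and to $0$ respectively yields a later object in which $x$ maps to $0$; thus the class already vanishes in the colimit. This identifies $M$ with the filtered colimit $\colim_{\mathcal{I}}F$ of \fg. free modules and completes the argument.
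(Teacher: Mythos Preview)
Your argument is correct and follows the standard route to Lazard's theorem: build the over-category of finitely generated free modules mapping to $M$, use the equational (matrix) criterion for flatness to verify the coequalizing condition, and then check that the canonical map from the colimit to $M$ is bijective. One small remark: your parenthetical that the equational criterion is ``in essence equivalent to the theorem being proved'' might suggest circularity, but there is none, since the equational criterion is established directly from the definition of flatness (as in Lam) prior to and independently of Lazard's theorem.

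The paper itself does not give a proof of this result at all; it merely records the statement and cites \cite{TYL:LectModules&Rings}*{theorem~4.34}. So there is nothing to compare: you have supplied a complete proof where the paper only invokes the literature, and the proof you give is essentially the one found in that reference.
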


\subsection*{Coherence for rings and modules}
We recall the notion of (pseudo-)coherence
since we will make heavy use of it. For basic
properties of (pseudo-)coherent modules see
Bourbaki~\cite{Bourbaki:HomAlg}*{X.\S 3, ex.~10}.
\begin{defn}\label{defn:Coherence}
Let $R$ be a ring.
\begin{itemize}
\item
An $R$-module $M$ is \emph{pseudo-coherent}
if every \fg. submodule is \fp..
\item
A \fg. pseudo-coherent module is called
\emph{coherent}.
\item
$R$ is \emph{left/right coherent} if it
is coherent as a left/right $R$-module.
\item
$R$ is \emph{coherent} if it is coherent
as a left and right $R$-module.
\end{itemize}
\end{defn}
\begin{rem}\label{rem:Coherence}
Over a coherent ring, a module is \fp. if
and only if it is coherent, and every \pfr.
module is pseudo-coherent. Moreover, its
coherent modules form an abelian category
(see~Theorem~\ref{thm:fp-AbCat}).

The notion of coherence has an obvious meaning
for algebras over a field, and we will use
it without further comment.
\end{rem}

Of course every Noetherian ring is coherent,
so Frobenius algebras are coherent. Here is
a well known result that can be used to
produce many more examples of coherent rings,
this version appears in
Bourbaki~\cite{Bourbaki:HomAlg}*{X.\S 3, ex.~11e},
and it can be used to show that an infinitely
generated polynomial ring over a coherent
commutative ring is coherent.
\begin{prop}\label{prop:Coherent}
Let $(\Lambda,\preccurlyeq)$ be a directed
set and let $R(\lambda)$ $(\lambda\in\Lambda)$
be a $\Lambda$-directed system of rings
and homomorphisms
$\phi_\alpha^\beta\:R(\alpha)\to R(\beta)$
for $\alpha\preccurlyeq\beta$. Assume that
\begin{itemize}
\item
whenever $\alpha\preccurlyeq\beta$,
$R(\beta)$ is flat as a right/left
$R(\alpha)$-module;
\item
each $R(\lambda)$ is coherent.
\end{itemize}

Then the ring $\ds R=\colim_{(\Lambda,\preccurlyeq)}R(\lambda)$
is left/right coherent and for every
$\lambda\in\Lambda$, $R$ is a flat
right/left $R(\lambda)$-module.
\end{prop}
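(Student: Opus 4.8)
The plan is to establish the flatness assertion first and then feed it into the coherence argument, proving the ``left coherent / right flat'' halves of the statement (the other halves being entirely symmetric).

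For flatness, fix $\lambda\in\Lambda$ and regard $R$ as a right $R(\lambda)$-module. Since $(\Lambda,\preccurlyeq)$ is directed, the subset $\{\mu:\lambda\preccurlyeq\mu\}$ is cofinal in $\Lambda$, so $R=\colim_{\mu\succcurlyeq\lambda}R(\mu)$ already as a right $R(\lambda)$-module. Each $R(\mu)$ in this cofinal system is flat over $R(\lambda)$ by the first hypothesis, and a filtered colimit of flat modules is flat; hence $R$ is flat as a right $R(\lambda)$-module.

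For coherence I would use the characterisation coming from Definition~\ref{defn:Coherence}: $R$ is left coherent precisely when every \fg. left ideal is \fp., and for this it suffices to show that the kernel of every $R$-linear map $\theta\colon R^n\to R$ is \fg. (such a kernel sits in $0\to\ker\theta\to R^n\to\im\theta\to 0$, which then exhibits the \fg. ideal $\im\theta$ as \fp.). Such a $\theta$ is given by finitely many elements $r_1,\dots,r_n\in R$; since $R$ is the filtered colimit of the $R(\mu)$, these finitely many elements all lift to a single stage $R(\lambda)$, yielding $\tilde r_1,\dots,\tilde r_n\in R(\lambda)$ and a map $\theta_\lambda\colon R(\lambda)^n\to R(\lambda)$ which becomes $\theta$ after applying $R\otimes_{R(\lambda)}(-)$ and the canonical identifications $R\otimes_{R(\lambda)}R(\lambda)^n\iso R^n$ and $R\otimes_{R(\lambda)}R(\lambda)\iso R$. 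Now apply the flat functor $R\otimes_{R(\lambda)}(-)$ to the exact sequence $0\to\ker\theta_\lambda\to R(\lambda)^n\xrightarrow{\theta_\lambda}R(\lambda)$: flatness preserves its exactness, identifying $\ker\theta$ with $R\otimes_{R(\lambda)}\ker\theta_\lambda$. Coherence of $R(\lambda)$ makes $\ker\theta_\lambda$ \fg., and the images of a finite generating set under $R\otimes_{R(\lambda)}(-)$ generate $R\otimes_{R(\lambda)}\ker\theta_\lambda\iso\ker\theta$ over $R$. Hence $\ker\theta$ is \fg., so $R$ is left coherent.

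The substantive points, and where I would take most care, are the descent of the finitely many $r_i$ to a common stage $\lambda$ (valid exactly because the colimit is filtered) and the bookkeeping of sides: it is right-flatness of $R$ over $R(\lambda)$ that is needed in order to tensor the left-module sequence above and conclude left coherence. Everything else is a routine application of flatness together with the definition of coherence, and the right-hand statements follow by the symmetric argument.
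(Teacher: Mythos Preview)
Your argument is correct. The paper itself does not give a proof of this proposition: it is stated with a reference to Bourbaki~\cite{Bourbaki:HomAlg}*{X.\S3, ex.~11e} and left at that. What you have written is essentially the standard argument one finds there: reduce flatness of $R$ over $R(\lambda)$ to a cofinal filtered colimit of flat modules, then descend a finite presentation problem to some $R(\lambda)$ and base-change back using that flatness. Your remarks on the bookkeeping of sides (right-flatness feeding into left coherence) and on the filtered hypothesis being exactly what allows descent of finitely many elements to a common stage are both accurate and are the only places one could slip up.

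One small point worth making explicit for the reader: when you invoke ``coherence of $R(\lambda)$ makes $\ker\theta_\lambda$ \fg.'', you are implicitly using Corollary~\ref{cor:Schanuel} (or its immediate consequence): the image $\im\theta_\lambda$ is a \fg. left ideal of the coherent ring $R(\lambda)$, hence \fp., and then the kernel in $0\to\ker\theta_\lambda\to R(\lambda)^n\to\im\theta_\lambda\to0$ is \fg. because the middle term is. This is routine, but since the paper states Schanuel's Lemma precisely for such purposes it would be natural to cite it.
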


Noetherianness and coherence of rings are
characterised by homological properties.
\begin{thm}\label{thm:Flat-Inj}
Let $R$ be a ring. \\
\emph{(a)} $R$ is left/right Noetherian
if and only if all coproducts and directed
colimits of left/right injectives are
injective.  \\
\emph{(b)} $R$ is left/right coherent if and
only if all products and directed limits of
flat left/right modules are flat.
\end{thm}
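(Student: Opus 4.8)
The plan is to treat the two parts as formal duals: coproducts versus products, directed colimits versus directed limits, and injectivity versus flatness (linked through character modules $M^+=\Hom_{\Z}(M,\Q/\Z)$, for which a right module $F$ is flat precisely when $F^+$ is an injective left module). Despite this parallel, I would prove each by its own elementary criterion: Baer's criterion for~(a), and the $\Tor_1$-criterion for flatness together with Chase's comparison map for~(b). The product cases are the classical Bass--Papp and Chase theorems; the genuine work lies in the colimit/limit enhancements.

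For~(a), assume first that $R$ is left Noetherian, so every left ideal $I$ is \fg., hence \fp.. Given a $\Lambda$-filtered system $\{E_\lambda\}$ of injectives with $E=\colim_\lambda E_\lambda$ and a map $f\colon I\to E$, I would use that $\Hom(I,-)$ commutes with filtered colimits when $I$ is \fp. to factor $f$ through some $E_\lambda$; injectivity of $E_\lambda$ extends it over $R$, and composing with the structure map extends $f$, so Baer's criterion gives injectivity of $E$. Coproducts are the special case in which a \fg. submodule of $\bigoplus_i E_i$ meets only finitely many summands and finite sums of injectives are injective. For the converse it suffices to treat coproducts, since these are particular directed colimits: if $R$ were not Noetherian, choose a strictly ascending chain $I_1\subsetneq I_2\subsetneq\cdots$ with union $I$, embed each $R/I_n$ in an injective hull $E_n$, and define $f\colon I\to\bigoplus_n E_n$ by $f(x)=(x+I_n)_n$, which is well defined because each $x$ lies in $I_n$ for large $n$. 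Any extension $\tilde f\colon R\to\bigoplus_n E_n$ has $\tilde f(1)$ supported on finitely many summands, forcing $f(x)=x\tilde f(1)$ to vanish beyond a fixed stage and contradicting the strict inclusions.

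For~(b) the engine is the canonical comparison map
\[
\nu_N\colon \Big(\prod_j F_j\Big)\otimes_R N \to \prod_j\big(F_j\otimes_R N\big),
\]
which is surjective for \fg. $N$ and bijective for \fp. $N$. Since a right module $F$ is flat if and only if $\Tor_1^R(F,R/I)=0$, equivalently the map $F\otimes_R I\to F$ is injective, for every \fg. left ideal $I$, I would argue as follows. If $R$ is left coherent then such $I$ is \fp., so $\nu_I$ is an isomorphism identifying $(\prod_j F_j)\otimes_R I$ with $\prod_j(F_j\otimes_R I)$; flatness of each $F_j$ makes $\prod_j(F_j\otimes_R I)\to\prod_j F_j$ injective, whence $\prod_j F_j$ is flat. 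Conversely, flatness of all products of flats includes flatness of $R^X=\prod_X R$, and Chase's argument then forces every \fg. left ideal to be \fp. by testing finite generation of its relation module through $\nu$, so $R$ is coherent.

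It remains to pass from products to directed limits of flats, and this is where I expect the main obstacle. I would present $\varprojlim_j F_j$ through the standard four-term exact sequence
\[
0\to\varprojlim_j F_j\to\prod_j F_j\xrightarrow{\;\Psi\;}\prod_{\alpha\preccurlyeq\beta}F_\alpha\to\varprojlim{}^{1}F_j\to 0
\]
realising the limit and its first derived limit as the kernel and cokernel of the map $\Psi$ built from the transition homomorphisms. When $\varprojlim{}^{1}F_j=0$—for instance in Mittag--Leffler situations, and in particular the countable case flagged in the introduction—this collapses to a short exact sequence $0\to\varprojlim_j F_j\to\prod_j F_j\to\prod_{\alpha\preccurlyeq\beta}F_\alpha\to 0$ whose two right-hand terms are flat by the product case; since flat modules are closed under the kernel in an extension with flat quotient (immediate from the long exact $\Tor$ sequence, as $\Tor_2$ of a flat module vanishes), the limit is flat. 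The difficulty is precisely the control of $\varprojlim{}^{1}$: for a general directed system the derived limit need not vanish, and a kernel of a map between flats need not be flat, so the unconditional statement requires either a purity/Mittag--Leffler reduction or the character-module reformulation that converts the limit of flats into the injectivity assertion handled in~(a). Verifying this vanishing, or circumventing it, is the crux of the argument.
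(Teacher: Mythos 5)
The paper does not actually prove this statement: part~(a) is the Bass--Papp theorem and part~(b) is Chase's theorem, and both are dispatched by citation to Lam. Your proposal therefore supplies genuine content that the paper omits, and most of it is the standard argument behind those citations. Your treatment of~(a) is complete and correct: finite generation plus Noetherianness gives finite presentation of ideals, $\Hom_R(I,-)$ out of a finitely presented module commutes with filtered colimits, and Baer's criterion finishes the forward direction; the ascending-chain construction for the converse is the classical one, and reducing coproducts to directed colimits of finite subsums is fine. For the product half of~(b), the forward direction via the comparison map $\nu_I$ for finitely presented ideals is correct; the converse is only named rather than proved, but that is the genuinely classical part of Chase's theorem and naming it is defensible. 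Note that the colimit clause of~(a) is the one the paper actually uses later (in the proof of Proposition~\ref{prop:Proj-Flat-Inj}), so it is good that this is the part you prove in full.

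The genuine gap is the one you flag yourself: the ``directed limits of flat modules'' clause of~(b). Your argument covers only the case where the map $\Psi\colon\prod_j F_j\to\prod_{\alpha\preccurlyeq\beta}F_\alpha$ is surjective, and even the reduction to ``$\varprojlim^1=0$'' is slightly delicate, since over a general directed poset the cokernel of the two-term complex need not coincide with the first derived limit (it only contains it as a subquotient of the full Roos complex computation); what your kernel-of-flats argument really needs is surjectivity of $\Psi$. For an arbitrary inverse system of flats over a coherent ring neither condition is available, and since $\varprojlim_j F_j$ is merely a submodule of the flat module $\prod_j F_j$, flatness does not follow from the product case alone. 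The character-module dualisation you mention also does not rescue this, because $\Hom_{\Z}(-,\Q/\Z)$ converts colimits to limits but not limits to colimits. You should be aware that the cited source (Lam's Theorem~4.47) covers only products, and that the paper itself never invokes the inverse-limit clause of~(b) (its later uses, e.g.\ in Proposition~\ref{prop:PalgebrasProj-Flat-Inj} and the remark following Lemma~\ref{lem:flat->relinj}, concern products only). So your honesty about the crux is well placed, but as written the proposal does not establish part~(b) in the generality stated; to close it you would need either a purity argument showing $\varprojlim_j F_j\subseteq\prod_j F_j$ is pure, or an additional Mittag--Leffler/countability hypothesis, or a restriction of the statement to the product case that the paper actually uses.
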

\begin{proof}
(a) See~\cite{TYL:LectModules&Rings}*{theorem~1.3.46}. \\
(b) This is a result of Chase,
see~\cite{TYL:LectModules&Rings}*{theorem~1.4.47}.
\end{proof}

We will frequently make use of faithful flatness,
so for the convenience of the reader we state
some of the main properties.
\begin{prop}\label{prop:FF}
Let $R$ be a ring and $P$ a flat right $R$-module. \\
\emph{(a)} $P$ is faithfully flat if it satisfies
any and hence all of the following equivalent
conditions.
\begin{itemize}
\item
A sequence of left $R$-modules
\[
0\to L\to M\to N\to0
\]
is short exact if and only if the induced
sequence
\[
0\to P\otimes_RL\to P\otimes_RM\to P\otimes_RN\to0
\]
is short exact.
\item
For a left $R$-module $M$, $P\otimes_R M=0$
if and only if $M=0$.
\item
A homomorphism of left $R$-modules $\phi\:M\to N$
is zero if and only if the induced homomorphism
$\Id_P\otimes\phi\:P\otimes_R M\to P\otimes_R N$
is zero.
\end{itemize}
\emph{(b)} If $P$ is faithfully flat then $P\otimes_R(-)$
reflects monomorphisms, epimorphisms and isomorphisms. \\
\emph{(c)} Let $R\to S$ be a ring homomorphism
so that~$S$ is a faithfully flat right $R$-module,
and let~$M$ be a left $R$-module. If $S\otimes_RM$
is a simple $S$-module, then~$M$ is simple.
\end{prop}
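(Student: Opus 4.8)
The plan is to handle the three parts in turn, using throughout that flatness of $P$ makes $P\otimes_R(-)$ exact, so that it commutes with the formation of kernels, images, cokernels and hence homology; the extra input of faithful flatness will enter only through the vanishing criterion that a tensor product is zero exactly when the module is.

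For (a) I would show the three bullet conditions equivalent by a short cycle of implications. The link between the second and third is immediate from flatness: if $P\otimes_R M=0$ then $\Id_P\otimes\id_M=\id_{P\otimes_R M}=0$, while conversely $\Id_P\otimes\phi=0$ for $\phi\colon M\to N$ gives $P\otimes_R\im\phi=\im(\Id_P\otimes\phi)=0$, whence $\im\phi=0$ under the second condition. To bring in the first condition, note that one of its two directions is automatic from flatness; applying it to the degenerate sequence $0\to M\to 0\to 0\to 0$, which is exact precisely when $M=0$, recovers the second condition. Conversely, given the second condition and a sequence whose tensor is exact, I would compute the homology of the original sequence at each spot and tensor by $P$; flatness identifies this with the (vanishing) homology of the tensored sequence, so the second condition forces the original homology to vanish and the sequence to be exact.

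Parts (b) and (c) are then formal. For (b), if $\Id_P\otimes\phi$ is a monomorphism then $P\otimes_R\ker\phi=\ker(\Id_P\otimes\phi)=0$, so $\ker\phi=0$ by (a); the epimorphism statement is identical with $\coker\phi$ replacing $\ker\phi$, and isomorphisms follow by combining the two. For (c), flatness of $S$ turns any submodule $N\subseteq M$ into a short exact sequence $0\to S\otimes_R N\to S\otimes_R M\to S\otimes_R(M/N)\to 0$ of $S$-modules, so the image of $S\otimes_R N$ is an $S$-submodule of the simple module $S\otimes_R M$ and is therefore either $0$ or everything; faithful flatness then yields $N=0$ in the first case and $M/N=0$ in the second, while $S\otimes_R M\neq 0$ gives $M\neq 0$. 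Hence $M$ is simple. There is no serious obstacle here: the only points needing attention are the bookkeeping in (a) of which implication is automatic from flatness and which genuinely requires the vanishing criterion, together with the observation in (c) that $S\otimes_R N\to S\otimes_R M$ is a map of $S$-modules, so that simplicity applies to its image.
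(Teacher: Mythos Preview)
Your proposal is correct. For part~(c) your argument is essentially identical to the paper's: both tensor a short exact sequence $0\to L\to M\to N\to 0$ up to $S$, observe that simplicity of $S\otimes_R M$ forces one of $S\otimes_R L$ or $S\otimes_R N$ to vanish, and then invoke faithful flatness to conclude that $L=0$ or $N=0$. For parts~(a) and~(b) the paper simply cites Lam~\cite{TYL:LectModules&Rings}*{theorem~4.70} and says ``immediate from~(a)'', whereas you supply the standard cycle of implications and the kernel/cokernel argument explicitly; so your write-up is more self-contained but not a different approach.
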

\begin{proof}
(a) See~\cite{TYL:LectModules&Rings}*{theorem~4.70}.  \\
(b) This is immediate from (a). \\
(c) By flatness, a short exact sequence of $R$-modules
\[
0\to L\to M\to N\to 0
\]
on tensoring with~$S$ yields a short exact sequence
of $S$-modules
\[
0\to S\otimes_RL\to S\otimes_RM\to S\otimes_RN\to0.
\]
If $S\otimes_RM$ is simple then one of~$S\otimes_RL$
or~$S\otimes_RN$ must be trivial, so by faithful
flatness of~$S$, one of~$L$ or~$N$ must be trivial.
Hence~$M$ is simple.
\end{proof}

%
%
%

\section{Frobenius algebras and Frobenius
extensions}\label{sec:FroAlg&Extns}

Recall that a Frobenius $\k$-algebra~$R$
is self-injective, i.e., injective as a
left/right $R$-module. The following is
a fundamental consequence.

Recall that there are induction and
coinduction functors
\[
\ind_{\k}^{R}\:\Mod_{\k}\to\Mod_{R},
\quad
\coind_{\k}^{R}:\Mod_{\k}\to\Mod_{R}
\]
where
\[
\ind_{\k}^{R}(-)= R\otimes_\k(-),
\quad
\coind_{\k}^{R}(-) = \Hom_\k(R,-).
\]
Notice that for a $\k$-vector space~$W$,
the $R$-module $\ind_{\k}^{R}W$ is
projective (and in fact free) while
$\coind_{\k}^{R}W$ is injective.

For future use we recall that since we
are working over a field $\k$, every
$R$-module~$M$ admits an embedding
into an injective,
\begin{equation}\label{eq:Inj-embedding}
M\to \coind_{\k}^{R}M;
\quad
m\longmapsto (r\mapsto rm).
\end{equation}

The next result is a standard reformulation
of what it means to be a Frobenius algebra.
\begin{thm}\label{thm:Frob-Adjunction}
If $R$ is a Frobenius\/ $\k$-algebra, then
the functors\/ $\ind_{\k}^{R}$ and\/
$\coind_{\k}^{R}$ are naturally isomorphic.
Hence for a\/ $\k$-vector space~$W$,\/
$\ind_{\k}^{R}W\iso\coind_{\k}^{R}W$
is both projective and injective.
\end{thm}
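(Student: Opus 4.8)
The plan is to exhibit the natural isomorphism directly and then read off the ``projective and injective'' clause from the facts already recorded just before the statement. Recall that a Frobenius $\k$-algebra is finite dimensional and, by one of the standard equivalent definitions, carries a \emph{Frobenius form}: a linear functional $\lambda\:R\to\k$ whose associated bilinear pairing $\<x,y\>=\lambda(xy)$ is non-degenerate; equivalently there is an isomorphism $R\iso\Hom_\k(R,\k)$ of (say) left $R$-modules. I would take this as the input and work with left modules throughout, the right-hand version being identical after transposing conventions.

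First I would note that, because $R$ is finite dimensional, there is a natural isomorphism of left $R$-modules $\Hom_\k(R,\k)\otimes_\k W\iso\Hom_\k(R,W)=\coind_\k^{R}W$, sending $\psi\otimes w$ to the map $s\mapsto\psi(s)\,w$; here the left $R$-action on the source sits on the $\Hom_\k(R,\k)$ factor, and it matches the coinduced action $(r\.f)(s)=f(sr)$ on the target. Next, from the Frobenius form I would define $\Phi\:R\to\Hom_\k(R,\k)$ by $\Phi(x)(s)=\lambda(sx)$. A short check shows that $\Phi$ is left $R$-linear for the action $(r\.\psi)(s)=\psi(sr)$, since $\Phi(rx)(s)=\lambda(srx)=\Phi(x)(sr)$, and non-degeneracy of $\lambda$ together with $\dim R<\infty$ forces $\Phi$ to be bijective. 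Hence $\Phi$ is an isomorphism of left $R$-modules. Tensoring with $W$ and composing the two displayed maps yields the desired natural isomorphism $\ind_\k^{R}W=R\otimes_\k W\iso\coind_\k^{R}W$, explicitly $x\otimes w\mapsto(s\mapsto\lambda(sx)\,w)$; naturality in $W$ is clear because every map touches only the $W$-factor.

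Finally, the second assertion is immediate: it was already observed above that $\ind_\k^{R}W$ is free, hence projective, and that $\coind_\k^{R}W$ is injective (being the coinduction of the injective $\k$-module $W$). The isomorphism $\ind_\k^{R}W\iso\coind_\k^{R}W$ therefore identifies a single module that is simultaneously projective and injective.

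The only delicate point, and the step I would be most careful about, is the bookkeeping of left/right conventions: one must choose the side of the Frobenius pairing so that $\Phi$ is equivariant for the coinduced left action $(r\.\psi)(s)=\psi(sr)$ rather than for its opposite, and correspondingly place the $R$-action on the first tensor factor of $\Hom_\k(R,\k)\otimes_\k W$. Everything else is routine, with finite dimensionality feeding both the $\Hom$--tensor identity and the implication from non-degeneracy to bijectivity of $\Phi$.
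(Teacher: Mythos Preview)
Your argument is correct: the map $\Phi(x)(s)=\lambda(sx)$ is left $R$-linear for the coinduced action, and combining it with the finite-dimensional $\Hom$--tensor identification gives the natural isomorphism; the projective/injective claim then follows exactly as you say from the remarks preceding the theorem. The paper itself does not supply a proof---it introduces the result as ``a standard reformulation of what it means to be a Frobenius algebra''---so your write-up is simply filling in that standard argument, and your caution about matching the side of the pairing to the coinduced action is precisely the point one needs to get right.
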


\begin{cor}\label{cor:Frob-Inj=Proj}
An $R$-module is injective if and only
if it is projective.
\end{cor}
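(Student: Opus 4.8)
The plan is to deduce both implications symmetrically from Theorem~\ref{thm:Frob-Adjunction}, whose content is that for a Frobenius algebra the coinduced modules $\coind_\k^R W = \Hom_\k(R,W)$, which are always injective, coincide up to natural isomorphism with the induced modules $\ind_\k^R W = R\otimes_\k W$, which are free and hence projective. The whole argument then rests on the twin facts that a direct summand of a projective module is projective and a direct summand of an injective module is injective.

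First I would prove that every projective $R$-module $M$ is injective. Such an $M$ is a direct summand of a free module $F$, and any free module is of the form $\ind_\k^R W$ for a suitable $\k$-vector space $W$ (take $W$ spanned by a free basis). By Theorem~\ref{thm:Frob-Adjunction}, $F = \ind_\k^R W \iso \coind_\k^R W$ is injective, and a direct summand of an injective module is injective; hence $M$ is injective.

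For the converse, I would show that every injective $M$ is projective. Using the canonical embedding~\eqref{eq:Inj-embedding}, $M$ sits inside the injective module $\coind_\k^R M$. Since $M$ is itself injective, this monomorphism splits (the identity on $M$ extends along it), so $M$ is a direct summand of $\coind_\k^R M$. But Theorem~\ref{thm:Frob-Adjunction} identifies $\coind_\k^R M \iso \ind_\k^R M$, which is free and therefore projective; as a direct summand of a projective module is projective, $M$ is projective.

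I do not anticipate a serious obstacle, since Theorem~\ref{thm:Frob-Adjunction} has already done the heavy lifting by supplying the natural isomorphism $\ind_\k^R \iso \coind_\k^R$. The only points requiring a little care are noting that an arbitrary free module really is an induced module $\ind_\k^R W$, so that the theorem applies without any finiteness hypothesis on the rank, and invoking the splitting of the embedding~\eqref{eq:Inj-embedding} in the injective-implies-projective direction.
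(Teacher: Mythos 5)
Your proof is correct and follows essentially the same route as the paper: both directions reduce to exhibiting the module as a direct summand of an induced module $\ind_\k^R W$, which Theorem~\ref{thm:Frob-Adjunction} identifies with the injective coinduced module $\coind_\k^R W$; your injective-implies-projective direction is word-for-word the paper's argument, and your projective-implies-injective direction (summand of a free module, realised as $\ind_\k^R W$) is a trivial variant of the paper's lifting of $\Id_P$ through $\ind_\k^R P\to P$.
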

\begin{proof}
Let $J$ be an injective $R$-module. As
a special case of~\eqref{eq:Inj-embedding}
there is a monomorphism of $R$-modules
$J\to\coind_{\k}^{R}J$ and by injectivity
there is a commutative diagram with exact
row
\[
\xymatrix{
0\ar[r] & J\ar[d]_{\Id_J}\ar[r] & \coind_{\k}^{R}J\ar[dl] \\
& J &
}
\]
so~$J$ is a retract of a projective module,
hence projective. A similar argument shows
that for a projective~$P$, there is a
commutative diagram
\[
\xymatrix{
\ind_{\k}^{R}P\ar[r] & P\ar[r] & 0 \\
& P\ar[u]_{\Id_P}\ar[ul] &
}
\]
which shows that $P$ is injective.
\end{proof}

A useful consequence of this result is that
every monomorphism of $R$-modules~$R\to M$
splits.

The next result summarises the fundamental
properties of modules over a Frobenius
algebra which motivate much of our work
on modules over locally Frobenius algebras.
\begin{prop}\label{prop:Proj-Flat-Inj}
Let $R$ be a Frobenius\/ $\k$-algebra
and~$M$ an $R$-module. Then the
following are equivalent:
\begin{itemize}
\item
$M$ is injective;
\item
$M$ is projective;
\item
$M$ is flat.
\end{itemize}
\end{prop}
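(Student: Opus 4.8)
The plan is to establish the three-way equivalence by combining the results already assembled in the excerpt. The equivalence ``injective $\Longleftrightarrow$ projective'' is precisely Corollary~\ref{cor:Frob-Inj=Proj}, so that direction requires no further work. The genuine content therefore lies in tying ``flat'' into this pair, and for this I would prove the two implications ``projective $\Rightarrow$ flat'' and ``flat $\Rightarrow$ projective'' separately, since the first is elementary while the second is where the Frobenius hypothesis must be leveraged.

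First I would record that projective modules are always flat over any ring; this is completely standard and needs no appeal to the Frobenius structure. That closes the easy half of the cycle, giving ``projective $\Rightarrow$ flat'' for free. The remaining and more substantive step is ``flat $\Rightarrow$ projective.'' Here I would invoke the Lazard--Govorov theorem (Theorem~\ref{thm:Lazard-Govorov}): a flat module $M$ is a filtered colimit of finitely generated free modules. The idea is that each finitely generated free module is injective, since over a Frobenius algebra $R$ itself is injective (Theorem~\ref{thm:Frob-Adjunction}, or directly the self-injectivity in the definition) and finite direct sums of injectives are injective.

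The main obstacle will be passing from the injectivity of the terms in the directed system to injectivity, and hence projectivity, of the colimit~$M$. A filtered colimit of injectives need not be injective in general, so one cannot simply commute ``injective'' past the colimit. The cleanest resolution is to observe that a Frobenius algebra is finite dimensional over~$\k$, hence Noetherian (even Artinian), so Theorem~\ref{thm:Flat-Inj}(a) applies: over a Noetherian ring every directed colimit of injectives is injective. Thus the flat module~$M$, being a filtered colimit of the finitely generated free (hence injective) modules supplied by Lazard--Govorov, is itself injective; and injective implies projective by Corollary~\ref{cor:Frob-Inj=Proj}. This completes the cycle
\[
\text{injective}\ \Rightarrow\ \text{projective}\ \Rightarrow\ \text{flat}\ \Rightarrow\ \text{injective},
\]
and the three conditions are equivalent.

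Alternatively, if one wishes to avoid the colimit argument entirely, I would note that a flat module is in particular finitely related whenever it is finitely generated, and then Theorem~\ref{thm:Lam}(b) states directly that a finitely related flat module is projective; but since the proposition concerns arbitrary~$M$ rather than only finitely generated ones, the Noetherian route through Theorem~\ref{thm:Flat-Inj}(a) is the more robust one and I expect that to be the intended argument.
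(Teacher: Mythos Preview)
Your proposal is correct and matches the paper's own proof essentially line for line: the equivalence of injective and projective is quoted from Corollary~\ref{cor:Frob-Inj=Proj}, projective $\Rightarrow$ flat is standard, and for flat $\Rightarrow$ injective the paper likewise combines the Lazard--Govorov theorem with the Noetherianity of~$R$ and Theorem~\ref{thm:Flat-Inj}(a) to conclude that the filtered colimit of injectives is injective. Your identification of the ``main obstacle'' and its resolution via Noetherianity is exactly the argument the paper gives.
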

\begin{proof}
The first two are equivalent by
Corollary~\ref{cor:Frob-Inj=Proj}, while
projectivity implies flatness.

Since~$R$ is self-injective and Noetherian,
by using the Lazard-Govorov
Theorem~\ref{thm:Lazard-Govorov},
Corollary~\ref{cor:Frob-Inj=Proj} and
Theorem~\ref{thm:Flat-Inj}(a) we find
that flatness implies injectivity.
\end{proof}

Now recall that if $S$ is a simple $R$-module
then Schur's Lemma implies that then~$\End_R(S)$
is a division algebra central over~$\k$, so $S$ 
can be viewed as a vector space over~$\End_R(S)$
and
\[
\dim_\k S = \dim_\k\End_R(S)\,\dim_{\End_R(S)}S.
\]
\begin{prop}\label{prop:Frob-embeddingInj}
Let $R$ be a Frobenius $\k$-algebra.  \\
\emph{(a)} Every $R$-module embeds into
a free module. In particular, every \fg.
$R$-module embeds into a \fg. free module. \\
\emph{(b)} Let $S$ be a simple left $R$-module.
Then $S$ embeds as a submodule of $R$ with
multiplicity equal to\/ $\dim_{\End_R(S)}S$.
\end{prop}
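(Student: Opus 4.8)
The plan is to deduce everything from the natural embedding \eqref{eq:Inj-embedding} together with the Frobenius isomorphism $\coind_\k^R\iso\ind_\k^R$ of Theorem~\ref{thm:Frob-Adjunction}. For part~(a) I would start with the monomorphism $M\to\coind_\k^R M$ of \eqref{eq:Inj-embedding}; by Theorem~\ref{thm:Frob-Adjunction} there is an isomorphism $\coind_\k^R M\iso\ind_\k^R M=R\otimes_\k M$, which is free, so composing yields an embedding of $M$ into a free module. For the finitely generated case, recall that a Frobenius algebra is finite dimensional over~$\k$, hence a \fg.\ $R$-module~$M$ is finite dimensional over~$\k$; writing $n=\dim_\k M$ we get $R\otimes_\k M\iso R^{n}$, a \fg.\ free module, and the same embedding does the job.

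For part~(b) the idea is to identify $\Hom_R(S,R)$ explicitly. Specialising Theorem~\ref{thm:Frob-Adjunction} to $W=\k$ gives $R=\ind_\k^R\k\iso\coind_\k^R\k=\Hom_\k(R,\k)$ as left $R$-modules. Using the coinduction adjunction $\Hom_R(S,\coind_\k^R\k)\iso\Hom_\k(S,\k)$, I then obtain
\[
\Hom_R(S,R)\iso\Hom_R(S,\coind_\k^R\k)\iso\Hom_\k(S,\k)=S^{*},
\]
so that $\dim_\k\Hom_R(S,R)=\dim_\k S$. Since $S$ is simple, every nonzero homomorphism $S\to R$ is a monomorphism, and its image necessarily lies in the socle; thus these homomorphisms exactly realise the embeddings of $S$ into $R$.

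To extract the multiplicity, write $D=\End_R(S)$, a division algebra by Schur's Lemma, and let $m$ denote the multiplicity of $S$ in $\soc R$. Then $\Hom_R(S,R)=\Hom_R(S,\soc R)\iso\Hom_R(S,S^{m})\iso D^{m}$, whence $\dim_\k\Hom_R(S,R)=m\,\dim_\k D$. Comparing this with $\dim_\k\Hom_R(S,R)=\dim_\k S$ and invoking the identity $\dim_\k S=\dim_\k\End_R(S)\,\dim_{\End_R(S)}S$ recalled just before the statement, I conclude $m=\dim_D S=\dim_{\End_R(S)}S$, as required.

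The adjunction manipulations are routine; the one point needing care is the bookkeeping of left/right module structures and the resulting $D$-module structure on $\Hom_R(S,R)$. Working with $\k$-dimensions throughout, rather than $D$-dimensions, sidesteps the delicate question of whether $\dim_D S^{*}=\dim_D S$ and reduces the whole multiplicity computation to the single scalar identity $\dim_\k S=\dim_\k\End_R(S)\,\dim_{\End_R(S)}S$.
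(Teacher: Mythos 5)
Your proposal is correct and follows essentially the same route as the paper: part (a) is the composite $M\to\coind_\k^RM\iso\ind_\k^RM$, and part (b) rests on the same adjunction chain $\Hom_R(S,R)\iso\Hom_R(S,\coind_\k^R\k)\iso\Hom_\k(S,\k)$. The only difference is cosmetic bookkeeping: the paper tracks the right $\End_R(S)$-module structure through the isomorphisms and reads off the multiplicity as $\dim_{\End_R(S)^{\op}}\Hom_R(S,R)$ directly, whereas you justify that identification via the socle decomposition and then divide $\k$-dimensions, which is a perfectly sound (and slightly more self-contained) way to finish.
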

\begin{proof}
We focus on left modules but the case of 
right modules is similar. \\
(a) For an $R$-module $M$, there is an
injective homomorphism as in~\eqref{eq:Inj-embedding},
\[
M\to\coind_{\k}^{R}M\iso\ind_{\k}^{R}M
\]
and the induced module is a free $R$-module.
When $M$ is \ \fg. $R$-module it is a \fd.
$\k$-vector space so the induced module is
a \fg. free $R$-module.
(b) Every non-trivial homomorphism $S\to R$
is injective. By Schur's Lemma its endomorphism 
algebra is a division algebra which is central 
over~$\k$. The multiplicity of~$S$ in $R$ is 
equal to $\dim_{\End_R(S)^\op}\Hom_R(S,R)$,
where $\End_R(S)$ acts on the right of
$\Hom_R(S,R)$ by precomposition.

The coinduced module
$\coind_{\k}^{R}\k=\Hom_\k(R,\k)$ is a
left $R$-module through the right action
on the domain, and then there are
isomorphisms of right $\End_R(S)$-vector
spaces
\begin{align*}
\Hom_R(S,R)
&\iso \Hom_R(S,\coind_{\k}^{R}\k) \\
&\iso \Hom_\k(R\otimes_RS,\k) \\
&\iso \Hom_\k(S,\k),
\end{align*}
hence $\Hom_R(S,R)$ is non-trivial and
the multiplicity of~$S$ is
\[
\dim_{\End_R(S)^\op}\Hom_R(S,R) = \dim_{\End_R(S)}S.
\qedhere
\]
\end{proof}

An important property of finite dimensional
Hopf algebras is that they have non-zero 
\emph{integrals}. Although in general Frobenius 
algebras do not, Frobenius algebras augmented 
over the ground field do. If $\epsilon\:R\to\k$ 
is the augmentation, its subspaces of left and 
right \emph{integrals with respect to~$\epsilon$}
are defined by
\[
\lint_R = \{ h\in R : \forall y\in R,\;yh=\epsilon(y)h \},
\quad
\rint_R = \{ h\in R : \forall y\in R,\;hy=\epsilon(y)h \}.
\]
Then there is an isomorphism $\lint_R\iso\Hom_R(\k,R)$
and a similar identification of~$\rint_R$ with 
right module homomorphisms.
\begin{prop}\label{prop:FrobAlg-Integrals}
Let $R$ be a Frobenius $\k$-algebra which is
augmented over\/~$\k$. Then viewing~$R$ and\/
$\k$ as left or right $R$-modules we have\/
$\dim_\k\Hom_R(\k,R)=1$ and therefore 
\[
\dim_\k\lint_R=1=\dim_\k\rint_R.
\]
\end{prop}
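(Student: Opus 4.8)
The plan is to recognise the ground field $\k$, made into an $R$-module via the augmentation $\epsilon$, as a simple module and then invoke Proposition~\ref{prop:Frob-embeddingInj}(b), which already contains all the substantive content. First I would note that $\epsilon\:R\to\k$ endows $\k$ with the structure of a left (and likewise right) $R$-module, with $r\.c=\epsilon(r)c$; since this module is one-dimensional over $\k$ it is automatically simple. Applying Proposition~\ref{prop:Frob-embeddingInj}(b) with $S=\k$, the multiplicity with which $\k$ embeds as a submodule of $R$ equals $\dim_{\End_R(\k)}\k$.

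Next I would compute $\End_R(\k)$. Any $R$-linear endomorphism of $\k$ is in particular $\k$-linear, hence multiplication by some scalar, and every scalar multiplication trivially commutes with the action $r\.c=\epsilon(r)c$; therefore $\End_R(\k)=\k$ and $\dim_{\End_R(\k)}\k=1$. Feeding this into the identification used in the proof of Proposition~\ref{prop:Frob-embeddingInj}(b), namely that the multiplicity equals $\dim_{\End_R(\k)^\op}\Hom_R(\k,R)=\dim_\k\Hom_R(\k,R)$, I conclude that $\dim_\k\Hom_R(\k,R)=1$.

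Finally I would invoke the isomorphism $\lint_R\iso\Hom_R(\k,R)$ recorded just above the statement, given by $\phi\mapsto\phi(1)$: an $R$-linear $\phi\:\k\to R$ is determined by $h=\phi(1)$, and $R$-linearity forces $rh=\phi(r\.1)=\epsilon(r)h$, which is precisely the defining condition for $h\in\lint_R$. Hence $\dim_\k\lint_R=\dim_\k\Hom_R(\k,R)=1$, and the symmetric argument using right modules gives $\dim_\k\rint_R=1$. I expect no genuine obstacle here: everything delicate has been front-loaded into Proposition~\ref{prop:Frob-embeddingInj}(b), and the only facts left to check are that $\k$ is simple and that $\End_R(\k)=\k$, both of which follow at once from one-dimensionality.
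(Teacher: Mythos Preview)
Your proposal is correct and follows exactly the approach the paper takes: the paper's proof is the single line ``This follows from Proposition~\ref{prop:Frob-embeddingInj}(b)'', and you have simply unpacked that application by noting that $\k$ is simple with $\End_R(\k)=\k$, so the multiplicity formula gives $\dim_\k\Hom_R(\k,R)=1$.
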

\begin{proof}
This follows from Proposition~\ref{prop:Frob-embeddingInj}(b).
\end{proof}

If $\rint_R=\lint_R$ then the augmented Frobenius
algebra is \emph{unimodular}; see
Farnsteiner~\cite{RF:FrobExtnHopfAlgs} for more on
this.

We will be interested in Frobenius algebras which
are \emph{symmetric}, i.e., they have a Frobenius
form $\lambda\:A\to\k$ which induces a symmetric
bilinear form. It follows that the associated
Nakayama automorphism is the identity and so
by Farnsteiner~\cite{RF:FrobExtnHopfAlgs}*{lemma~1.1}
it is unimodular.

For a Hopf algebra the counit is the natural
choice of augmentation and we will always choose
it. If the Hopf algebra is commutative or
cocommutative then its antipode is a self-inverse
automorphism making it \emph{involutive}, and
since the identity function is inner,
by~\cite{RF:FrobExtnHopfAlgs}*{proposition~2.3}
it is a symmetric Frobenius algebra if and
only if it is unimodular.

For finite dimensional Hopf algebras there
is an analogue of Maschke's Theorem.
\begin{thm}\label{thm:Maschke-HA}
Let $H$ be a finite dimensional Hopf algebra
and~$\epsilon$ its counit. Then~$H$ is
semisimple if and only if
\[
\epsilon\lint_H\neq\{0\}\neq\epsilon\rint_H.
\]
\end{thm}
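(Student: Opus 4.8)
The plan is to reduce semisimplicity to the projectivity of the one-dimensional trivial module $\k=H/\ker\epsilon$ and then to detect that projectivity via integrals. Precisely, I would establish the chain of equivalences
\[
H\text{ is semisimple}\iff \k\text{ is projective as a left }H\text{-module}\iff \epsilon\lint_H\neq\{0\},
\]
and then run the identical argument on right modules to obtain the condition on $\rint_H$. Since semisimplicity of a ring is left--right symmetric, each of the two nonvanishing conditions is separately equivalent to $H$ being semisimple, so they are equivalent to one another, which is the form asserted. The forward implication, $H$ semisimple implies $\k$ projective, is immediate, because over a semisimple algebra every module is projective.

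The substantive direction is that $\k$ projective implies $H$ semisimple, and this is where the full Hopf structure (not merely the augmentation) is needed; I expect this to be the main obstacle. The key input is the \emph{tensor identity}: for any left $H$-module $V$ the diagonal module $H\otimes_\k V$ is free. I would prove this by exhibiting the $H$-linear isomorphism
\[
H\otimes_\k V\longrightarrow H\otimes_\k V,\qquad a\otimes v\longmapsto \textstyle\sum a_{(1)}\otimes S(a_{(2)})v,
\]
which carries the diagonal action on the source to the action on the first tensor factor only; verifying $H$-linearity uses the coassociativity and antipode axioms, and the inverse is $a\otimes v\mapsto\sum a_{(1)}\otimes a_{(2)}v$. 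Since the first-factor action is visibly free, this gives $H\otimes_\k V\iso H^{\dim_\k V}$. Granting that $\k$ is a direct summand of some $H^n$ and that $\k\otimes_\k M\iso M$ for every $M$ (a consequence of the counit axiom applied to the diagonal action), it follows that $M$ is a direct summand of $H^n\otimes_\k M\iso(H\otimes_\k M)^n$, a free module. Hence every module is projective and $H$ is semisimple.

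Finally I would characterise projectivity of $\k$ by integrals. The counit $\epsilon\:H\to\k$ is a surjection of left $H$-modules, so $\k$ is projective exactly when $\epsilon$ admits an $H$-linear section $s\:\k\to H$. Under the identification $\Hom_H(\k,H)\iso\lint_H$ recalled above, such a section corresponds to $\Lambda=s(1)\in\lint_H$ with $\epsilon(\Lambda)=1$, since $H$-linearity of $s$ forces $h\Lambda=\epsilon(h)\Lambda$; conversely any left integral with $\epsilon(\Lambda)\neq0$ yields a section after rescaling by $\epsilon(\Lambda)^{-1}$. Thus $\k$ is projective if and only if $\epsilon\lint_H\neq\{0\}$, completing the chain, and the right-handed version is identical. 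When one additionally invokes that a finite dimensional Hopf algebra is Frobenius, so that $\dim_\k\lint_H=1$ by Proposition~\ref{prop:FrobAlg-Integrals}, the criterion simply says that the unique (up to scalar) integral is not annihilated by~$\epsilon$.
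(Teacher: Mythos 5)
Your proof is correct, but it follows a different (equally standard) route from the one the paper invokes. The paper simply cites Lorenz and sketches the classical Maschke-style argument: normalise a left or right integral $\Lambda$ with $\epsilon(\Lambda)=1$ to get an idempotent, and use the coproduct of $\Lambda$ to average an arbitrary $\k$-linear projection onto a submodule into an $H$-linear one, so that every submodule is a direct summand. You instead reduce semisimplicity to projectivity of the trivial module and prove the hard direction via the tensor identity $H\otimes_\k V\iso H^{\dim_\k V}$ (whose verification, which you indicate correctly, is where coassociativity and the antipode enter), concluding that every module is a summand of a free module; the translation between sections of $\epsilon$ and integrals with $\epsilon(\Lambda)\neq0$ is also handled correctly, and your appeal to the left--right symmetry of semisimplicity legitimately upgrades the single-sided criterion to the two-sided statement in the theorem (indeed you prove the slightly stronger fact that either nonvanishing condition alone is equivalent to semisimplicity). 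The two arguments are cousins -- the same computation with $\psi(\Lambda)$ and the antipode underlies both -- but yours packages the Hopf-theoretic input once and for all in the freeness of diagonal modules, which is arguably cleaner and makes the role of the antipode more transparent, whereas the averaging argument cited by the paper is shorter if one is content to quote the splitting of submodules directly. Either is a complete and acceptable proof.
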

\begin{proof}
See~\cite{ML:TourRepThy}*{page~553} for example.
The proof of semisimplicity uses a non-zero
idempotent $e\in\lint_H$ or $e\in\rint_H$ and
the coproduct applied to it.
\end{proof}

\begin{rem}\label{rem:ForbAlg-Macchke}
This result does not apply to all augmented
Frobenius algebras. As an example, consider
$R=\k\times R_0$ with the augmentation being
projection onto the first factor and~$R_0$ a
local Frobenius algebra that is not semisimple.
Let $\lambda_0$ be a Frobenius form on~$R_0$;
then the form on~$R$ given by
$\lambda(x,y)=x+\lambda_0(y)$ is Frobenius
and $(1,0)\in\lint_R$ but~$R$ is not semisimple.
\end{rem}

\bigskip
Now we turn to \emph{Frobenius extensions},
introduced by Kasch~\cite{FK:ProjFrobExtns}.
For a concise account which highlights aspects
relevant to our work, see Lorenz~\cite{ML:TourRepThy},
especially the exercises for sections~2.2
and~12.4. Other useful sources are
Farnsteiner~\cite{RF:FrobExtnHopfAlgs} and
Fischman et al \cite{DF&SM&H-JS:FrobExtns}.
We adopt the notation~$A:B$ rather than~$A/B$
to indicate an extension of algebras.

Suppose given a homomorphism $B\to A$ of
$\k$-algebras so we can view~$A$ as a
$B$-bimodule and consider the extension
of $\k$-algebras~$A:B$.
\begin{defn}\label{defn:FrobExtn}
The extension $A:B$ is a \emph{Frobenius
extension} if there is a $B$-bimodule
homomorphism~$\Lambda\:A\to B$ and 
elements $x_i,y_i\in A$ $(1\leq i\leq n)$ 
such that for all~$a\in A$,
\[
\sum_{1\leq i\leq n}y_i\Lambda(x_ia) 
= a = 
\sum_{1\leq i\leq n}\Lambda(ay_i)x_i.
\]
\end{defn}

\begin{prop}\label{prop:FrobExtn}
Suppose that $A:B$ is a Frobenius extension 
with associated $B$-bimodule homomorphism\/
~$\Lambda\:A\to B$ and elements
$x_i,y_i\in A$ $(1\leq i\leq n)$. Then
$A$ is projective as a left and right
$B$-module. Furthermore, there is an
isomorphism of functors
\[
\coind^A_B\xrightarrow{\;\iso\;}\ind^A_B;
\]
which is defined on each $B$-module~$M$
by
\[
\coind^A_BM=\Hom_B(A,M)\mapsto\coind^A_BM;
\quad
f\mapsto \sum_{1\leq i\leq n}y_if(x_i).
\]
Conversely, if~$A$ is projective as a
left and right $B$-module and\/
$\coind^A_B\iso\ind^A_B$, then~$A:B$
is a Frobenius extension.
\end{prop}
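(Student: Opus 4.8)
The plan is to read the two displayed identities of Definition~\ref{defn:FrobExtn} as dual-basis expansions for the first assertion, to write down an explicit inverse for the second, and to handle the converse abstractly by exploiting that the hypothesis makes $\ind^A_B$ simultaneously a left and a right adjoint of restriction.

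First I would dispose of projectivity. Because $\Lambda$ is a $B$-bimodule map, each assignment $a\mapsto\Lambda(x_ia)$ is a homomorphism of right $B$-modules $A\to B$, and the relation $\sum_i y_i\Lambda(x_ia)=a$ says exactly that the $y_i$, paired with these functionals, form a finite dual basis for $A$ as a right $B$-module; the dual basis lemma then gives that $A$ is finitely generated projective as a right $B$-module. Symmetrically, $a\mapsto\Lambda(ay_i)$ is left $B$-linear and $\sum_i\Lambda(ay_i)x_i=a$ exhibits the $x_i$ as a dual basis for $A$ as a left $B$-module, yielding left projectivity.

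For the isomorphism of functors I would produce the map together with its inverse. For a $B$-module~$M$ set
\[
\nu_M\:\Hom_B(A,M)\to A\otimes_BM,\qquad f\mapsto\sum_i y_i\otimes f(x_i),
\]
with proposed inverse
\[
\mu_M\:A\otimes_BM\to\Hom_B(A,M),\qquad a\otimes m\mapsto\bigl(a'\mapsto\Lambda(a'a)\,m\bigr).
\]
That $\mu_M$ is well defined over $\otimes_B$ and takes $B$-linear values uses only that $\Lambda$ is a $B$-bimodule map, and $A$-linearity together with naturality in~$M$ are then formal. The point is that the two composites collapse onto the defining identities: $\nu_M\circ\mu_M=\id$ reduces, after sliding the scalars $\Lambda(x_ia)\in B$ across the tensor symbol, to $\sum_i y_i\Lambda(x_ia)=a$, while $\mu_M\circ\nu_M=\id$ reduces, using the left $B$-linearity of~$f$, to $\sum_i\Lambda(a'y_i)x_i=a'$. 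Hence $\nu$ is a natural isomorphism $\coind^A_B\iso\ind^A_B$ realizing the map in the statement.

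For the converse I would argue abstractly. Restriction $\rest\:\Mod_A\to\Mod_B$ always admits $\ind^A_B$ as a left adjoint and $\coind^A_B$ as a right adjoint, so an isomorphism $\coind^A_B\iso\ind^A_B$ makes $\ind^A_B$ an ambidextrous adjoint of $\rest$: transporting $\rest\dashv\coind^A_B$ along the isomorphism yields a unit $u\:\id_{\Mod_A}\to\ind^A_B\rest$ and a counit $c\:\rest\ind^A_B\to\id_{\Mod_B}$ for $\rest\dashv\ind^A_B$. Evaluating $c$ at $B$ gives a left $B$-linear map $\Lambda:=c_B\:A\to B$, and naturality of $c$ along the right-multiplications $B\to B$ promotes $\Lambda$ to a $B$-bimodule homomorphism. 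Evaluating $u$ at $A$ gives a single element $\omega=u_A(1)=\sum_i x_i\otimes y_i\in A\otimes_BA$, necessarily a finite sum, and naturality of $u$ along the right-multiplications $A\to A$ shows $\omega$ is $A$-central: $\sum_i ax_i\otimes y_i=\sum_i x_i\otimes y_i a$ for every~$a$. It then remains to convert the triangle identities into the two Frobenius relations. Naturality of $c$ forces $c_M(p\otimes m)=\Lambda(p)m$, so $c_{\rest A}\circ\rest(u_A)=\id$ unwinds to $\sum_i\Lambda(ax_i)y_i=a$, whereas $\ind^A_B(c_B)\circ u_{\ind^A_BB}=\id$ gives the normalisation $\sum_i x_i\Lambda(y_i)=1$; feeding the centrality relation through $p\otimes q\mapsto p\Lambda(q)$ and using this normalisation produces the partner relation $\sum_i x_i\Lambda(y_ia)=a$. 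Swapping the labels $x_i$ and $y_i$ turns these into the identities of Definition~\ref{defn:FrobExtn}, so $A:B$ is a Frobenius extension. I expect the genuine obstacle to be bookkeeping rather than ideas: one must fix the left/right conventions so that $c_B$ is really two-sided, so that $\omega$ is really central, and so that each triangle identity evaluates to the advertised formula, everything hinging on keeping straight which $B$-action on~$A$ enters $\coind^A_B$ versus $\ind^A_B$.
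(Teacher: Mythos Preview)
The paper does not actually supply a proof of this proposition: it is stated as a standard characterisation of Frobenius extensions, with the surrounding text pointing to Lorenz, Farnsteiner, and Fischman--Montgomery--Schneider for details. So there is no ``paper's own proof'' to compare against.

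Your argument is correct and would serve as a self-contained proof. The dual-basis reading of the two identities in Definition~\ref{defn:FrobExtn} is exactly the right way to get finite projectivity on both sides, and your explicit pair $(\nu_M,\mu_M)$ does invert as you claim once the scalars are slid across the tensor. For the converse, the ambidextrous-adjoint extraction is the clean abstract route; the bookkeeping you flag (that $c_B$ really is a bimodule map via naturality along right multiplications, and that $u_A(1)$ is central for the same reason) goes through with the conventions the paper uses for $\coind^A_B$ and $\ind^A_B$. One small observation: your converse never invokes the projectivity hypothesis, and indeed it is redundant --- the Frobenius identities you extract already furnish dual bases, so finite projectivity follows. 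This is consistent with the standard statement that $\coind^A_B\iso\ind^A_B$ alone characterises Frobenius extensions.
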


In practice we will be considering the
situation in following result.
\begin{prop}\label{prop:FrobExt-Unimod}
Let $\epsilon\:A\to\k$ be an augmented symmetric
Frobenius algebra and let $B\subseteq A$ be
a subalgebra augmented by the restriction
of $\epsilon$ and which is also a symmetric
Frobenius algebra. Then $A:B$ is a Frobenius
extension.
\end{prop}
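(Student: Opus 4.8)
The plan is to verify Definition~\ref{defn:FrobExtn} directly, by producing a $B$-bimodule homomorphism $\Lambda\:A\to B$ together with elements $x_i,y_i\in A$ forming a Frobenius system; once these are in hand, Proposition~\ref{prop:FrobExtn} moreover records that $A$ is automatically projective as a left and right $B$-module and that $\coind^A_B\iso\ind^A_B$. Everything is built from the two symmetric Frobenius forms $\lambda_A\:A\to\k$ and $\lambda_B\:B\to\k$, and symmetry is essential: because $\lambda_A(ab)=\lambda_A(ba)$ (and similarly for $B$) the form is cyclically invariant, so the canonical maps $\nu_A\:A\to\Hom_\k(A,\k)$, $a\mapsto\lambda_A(a\,\cdot\,)$, and $\nu_B\:B\to\Hom_\k(B,\k)$ are isomorphisms of \emph{bimodules} rather than merely of one-sided modules. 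I would then define $\Lambda$ as the composite
\[
A\xrightarrow{\;\nu_A\;}\Hom_\k(A,\k)\xrightarrow{\;\mathrm{res}\;}\Hom_\k(B,\k)\xrightarrow{\;\nu_B^{-1}\;}B,
\]
where $\mathrm{res}$ is restriction along $B\subseteq A$. As a composite of $B$-bimodule maps it is a $B$-bimodule homomorphism, and it is characterised by $\lambda_B(\Lambda(a)b)=\lambda_A(ab)$ for all $b\in B$; in particular $\lambda_B\o\Lambda=\lambda_A$. This construction is purely formal.

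Granting that $A$ is \fg. projective as a right $B$-module, I would next show that the pairing furnished by $\Lambda$ is nondegenerate over $B$. Consider $\Psi\:A\to\Hom_B(A_B,B_B)$, $\Psi(a)(x)=\Lambda(ax)$, which takes values in right $B$-linear maps since $\Lambda$ is right $B$-linear. It is injective, for if $\Lambda(ax)=0$ for all $x$ then $\lambda_A(ax)=\lambda_B(\Lambda(ax))=0$ for all $x$, so $a=0$ by nondegeneracy of $\lambda_A$. It is then an isomorphism by a dimension count: for any \fg. projective $B$-module $P$, the Frobenius isomorphism $B\iso\Hom_\k(B,\k)$ gives $\Hom_B(P,B)\iso\Hom_\k(P,\k)$ and hence $\dim_\k\Hom_B(P,B)=\dim_\k P$, which forces the injection $\Psi$ to be onto when $P=A$. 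Finally, writing $A$ as a retract of $B^n$ as right $B$-modules produces right $B$-linear maps $\sigma_i\:A\to B$ and elements $y_i\in A$ with $a=\sum_i y_i\sigma_i(a)$; using $\Psi$ to write $\sigma_i=\Psi(x_i)$, that is $\sigma_i(a)=\Lambda(x_ia)$, yields $\sum_i y_i\Lambda(x_ia)=a$, the first identity of Definition~\ref{defn:FrobExtn}. The second identity $\sum_i\Lambda(ay_i)x_i=a$ follows by running the same argument with the left $B$-module structure and invoking symmetry of the forms, so Definition~\ref{defn:FrobExtn} is verified.

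The one genuinely substantial step, and the one I expect to be the main obstacle, is establishing that $A$ is \fg. projective as a one-sided $B$-module; everything above is formal once this is known. By Proposition~\ref{prop:Proj-Flat-Inj} applied to the Frobenius algebra~$B$ this is equivalent to flatness, and to injectivity, of $A$ as a $B$-module, but none of these is automatic from self-injectivity of $A$ alone, so this is precisely where the hypotheses on the pair $(A,B)$ must enter in an essential way. In the motivating situations---in particular when $A\supseteq B$ is an inclusion of (Hopf) algebras---$A$ is in fact \emph{free} as a one-sided $B$-module, which both makes the dimension count transparent and supplies the required retract directly; accordingly I would organise the argument so that projectivity of $A$ over $B$ is the hypothesis driving the result, with the symmetric Frobenius data of $A$ and $B$ feeding into the construction of $\Lambda$ and of the dual basis.
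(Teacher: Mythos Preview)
The paper's own proof is a one-line citation: since both algebras are symmetric their Nakayama automorphisms are the identity, and Farnsteiner's theorem~1.3 then applies directly. You have instead tried to rebuild that argument from the two Frobenius forms, and your construction of $\Lambda$ as $\nu_B^{-1}\circ\mathrm{res}\circ\nu_A$ together with the injectivity of $\Psi$ is exactly right. One correction: your dimension count does \emph{not} require $A$ to be projective over~$B$. For any right $B$-module~$P$, the bimodule isomorphism $B\cong\Hom_\k(B,\k)$ and tensor--hom adjunction give
\[
\Hom_B(P_B,B_B)\;\cong\;\Hom_B\bigl(P_B,\Hom_\k({}_BB,\k)\bigr)\;\cong\;\Hom_\k(P\otimes_BB,\k)\;=\;\Hom_\k(P,\k),
\]
so $\dim_\k\Hom_B(A_B,B)=\dim_\k A$ unconditionally and $\Psi$ is bijective without any projectivity assumption.

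The genuine obstruction sits one step later: producing $y_i\in A$ and $\sigma_i\in\Hom_B(A_B,B)$ with $a=\sum_i y_i\sigma_i(a)$ is exactly the dual basis lemma, hence \emph{equivalent} to $A_B$ being finitely generated projective, and bijectivity of $\Psi$ alone does not deliver this. (For instance, with $B=\k[x]/(x^2)$ embedded in $A=\k[x]/(x^2)\times\k$ via $b\mapsto(b,\epsilon(b))$, both are augmented symmetric Frobenius and $\Psi$ is bijective, yet $A\cong B\oplus\k$ over~$B$ is not projective since $B$ is local and $2\nmid 3$.) So you have correctly located the substantive step; this is precisely what the paper offloads to Farnsteiner. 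Your proposal to take projectivity of~$A$ over~$B$ as the driving hypothesis is in fact well aligned with how the result is used downstream: in Definition~\ref{defn:LocFrobAlg}(b) the paper explicitly assumes each $A(\lambda'')$ is \emph{free} over $A(\lambda')$ and remarks that only the Frobenius-extension conclusion, not freeness, is meant to come from this proposition.
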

\begin{proof}
Since the Nakayama automorphisms of~$A$ and~$B$
are the identity functions, the result is an
immediate consequence
of~\cite{RF:FrobExtnHopfAlgs}*{theorem~1.3}.
\end{proof}

%

\section{Locally Frobenius algebras}\label{sec:LocFrobAlg}

Let $(\Lambda,\preccurlyeq)$ be an infinite
partially ordered set which has the following
properties, in particular it is a \emph{directed
set}:
\begin{itemize}
\item
it is filtered: for any $\lambda',\lambda''\in\Lambda$
there is a common upper bound $\lambda\in\Lambda$,
so $\lambda'\preccurlyeq\lambda$ and
$\lambda''\preccurlyeq\lambda$;
\item
there is a unique initial element $\lambda_0$;
\item
for every $\lambda\in\Lambda$ there is
a $\lambda'''\in\Lambda$ with $\lambda\precneqq\lambda'''$.
\end{itemize}
We will denote such a directed set by
$(\Lambda,\preccurlyeq,\lambda_0)$ but often
just refer to it as~$\Lambda$.

If $\Lambda'\subseteq\Lambda$ is an infinite
filtered subset containing~$\lambda_0$, then
will refer to $(\Lambda',\preccurlyeq,\lambda_0)$
as a \emph{subdirected set} and write
$(\Lambda',\preccurlyeq,\lambda_0)\subseteq(\Lambda,\preccurlyeq,\lambda_0)$.

\begin{defn}\label{defn:LocFrobAlg}
A (necessarily infinite dimensional)
$\k$-algebra~$A$ is \emph{locally
Frobenius of shape\/~$\Lambda$} if it
satisfies the following conditions.  \\
(a) There is a $\Lambda$-directed system
of symmetric Frobenius algebras $A(\lambda)$
($\lambda\in\Lambda$) augmented over~$\k$
and proper inclusion homomorphisms
$\iota_{\lambda'}^{\lambda''}\:
A(\lambda')\hookrightarrow A(\lambda'')$
with $A(\lambda_0)=\k$ and
$\ds A=\bigcup_{\lambda\in\Lambda}A(\lambda)$. \\
(b) Each inclusion $\iota_{\lambda'}^{\lambda''}$
makes $A(\lambda''):A(\lambda')$ a free
Frobenius extension (i.e., $A(\lambda'')$
is free as a left and right $A(\lambda')$-module).
\end{defn}

Of course the Frobenius condition in (b)
is a consequence of Proposition~\ref{prop:FrobExt-Unimod},
while the freeness is an additional assumption.

\begin{defn}\label{defn:LocFrobSubAlg}
Let $A$ be a locally Frobenius algebra
of shape~$\Lambda$ and let
$(\Lambda',\preccurlyeq,\lambda_0)\subseteq(\Lambda,\preccurlyeq,\lambda_0)$
be a subdirected set. A subalgebra~$B\subseteq A$
is a \emph{locally Frobenius subalgebra of shape
$\Lambda'$} if
$\ds B=\bigcup_{\lambda\in\Lambda'}B(\lambda)$ is
locally Frobenius algebra of shape $\Lambda'$ and
for every $\lambda\in\Lambda'$, $B(\lambda)\subseteq A(\lambda)$.
\end{defn}

\begin{notation}
We will denote the kernel of the augmentation
$A(\lambda)\to\k$ by $A(\lambda)^+$; clearly
$A$ is also augmented so we similarly denote
its augmentation ideal by~$A^+$. All of these
are completely prime maximal ideals (recall
that an ideal $P$ in a ring is \emph{completely
prime} if $xy\in P$ implies $x\in P$ or $y\in P$);
in general this is stronger the notion of prime
(an ideal $Q$ in a ring $R$ is \emph{prime} if
$xRy\subseteq P$ implies $x\in P$ or $y\in P$).
All nilpotent elements of~$A$ are contained
in~$A^+$, and if~$e\in A$ is an idempotent
then exactly one of~$e$ or~$1-e$ is in~$A^+$.
\end{notation}

The augmentation condition in (a) may seem
unnecessary but we do require it for some
technical results and it is automatically
satisfied in the Hopf algebra case because
the counit is an augmentation. The Frobenius
extension condition in (b) might be weakened,
and even in the Hopf algebra case it is not
otherwise guaranteed except in special
circumstances such as when the~$A(\lambda)$
are all local.

We make a trivial observation.
\begin{lem}\label{lem:finitesubset}
Let $A$ be a locally Frobenius algebra.
Then for every finite subset $Z\subseteq A$
there is a $\lambda\in\Lambda$ such that
$Z\subseteq A(\lambda)$. Hence every
finite dimensional subspace~$V\subseteq A$
is contained in some~$A(\lambda)$.
\end{lem}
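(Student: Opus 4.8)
The plan is to exploit directly the two defining features of a locally Frobenius algebra recorded in Definition~\ref{defn:LocFrobAlg}: that $A$ is realised as the union of its subalgebras $A(\lambda)$, and that the indexing set $\Lambda$ is filtered. First I would take a finite subset $Z=\{z_1,\dots,z_n\}\subseteq A$. Since $\ds A=\bigcup_{\lambda\in\Lambda}A(\lambda)$, each element $z_i$ lies in some $A(\lambda_i)$ with $\lambda_i\in\Lambda$.

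Next, because $(\Lambda,\preccurlyeq,\lambda_0)$ is filtered, the finite collection $\{\lambda_1,\dots,\lambda_n\}$ admits a common upper bound $\lambda\in\Lambda$, so $\lambda_i\preccurlyeq\lambda$ for every $i$. Under the inclusion homomorphisms $\iota_{\lambda_i}^{\lambda}$, which are genuine inclusions of subalgebras of $A$ since $A$ is the union of the $A(\lambda)$, we have $A(\lambda_i)\subseteq A(\lambda)$, and hence $z_i\in A(\lambda)$ for each $i$. Therefore $Z\subseteq A(\lambda)$, as required.

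For the final assertion, I would pass from a finite dimensional subspace $V\subseteq A$ to a finite spanning set (for instance a basis) $\{v_1,\dots,v_m\}$ and apply the first part to $Z=\{v_1,\dots,v_m\}$, obtaining a $\lambda$ with $Z\subseteq A(\lambda)$. As $A(\lambda)$ is in particular a $\k$-subspace of $A$, it contains the span of $Z$, which is all of $V$; thus $V\subseteq A(\lambda)$.

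There is genuinely no obstacle here, and indeed the author flags this as a trivial observation. The only step that does any work is invoking the filtered property of $\Lambda$, which is precisely what guarantees a single common upper bound for the finitely many indices produced in the first step; everything else is routine bookkeeping with unions of subalgebras and with spans of finite sets.
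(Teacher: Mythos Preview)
Your proof is correct and follows exactly the same approach as the paper: pick an index $\lambda_z$ for each $z\in Z$, use the filtering condition to find a common upper bound $\lambda$, and conclude $Z\subseteq A(\lambda)$; your treatment of the finite dimensional subspace case via a spanning set is the evident elaboration the paper leaves implicit.
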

\begin{proof}
Each element $z\in Z$ is contained in
some $A(\lambda_z)$ and by the filtering
condition there is an upper bound $\lambda$
of the $\lambda_z$, so that
$Z\subseteq A(\lambda)$.
\end{proof}

Of course a \fd. subalgebra of $A$ is
\fd. subspace so it is a subalgebra
of some~$A(\lambda)$.

The next result provides an important
class of examples of locally Frobenius
algebras which we will refer to as
\emph{locally Frobenius Hopf algebras}.

\begin{prop}\label{prop:LOcFrobHopfAlg}
Let $H$ be a Hopf algebra over\/ $\k$.
Then~$H$ is a locally Frobenius algebra
of shape~$\Lambda$ if the following
conditions are satisfied.    \\
\emph{(a)} There is a $\Lambda$-directed
system of \fd. subHopf algebras $H(\lambda)\subseteq H$
$(\lambda\in\Lambda)$ and inclusion homomorphisms
$\iota_{\lambda'}^{\lambda''}\:
H(\lambda')\to H(\lambda'')$ with $H(\lambda_0)=\k$
and
$\ds H=\bigcup_{\lambda\in\Lambda}H(\lambda)$.  \\
\emph{(b)} Each $H(\lambda)$ is involutive
and unimodular.
\end{prop}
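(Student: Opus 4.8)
The plan is to verify, for the given directed system $\{H(\lambda)\}_{\lambda\in\Lambda}$, the two defining conditions of Definition~\ref{defn:LocFrobAlg}, using throughout the counit $\epsilon$ of $H$ (restricted to each $H(\lambda)$) as the augmentation over~$\k$.

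First I would check condition~(a). Each $H(\lambda)$ is a finite dimensional Hopf algebra, hence a Frobenius algebra by the theorem of Larson and Sweedler, and the restricted counit makes it augmented over~$\k$. By hypothesis~(b) each $H(\lambda)$ is involutive and unimodular, so the characterisation of Farnsteiner~\cite{RF:FrobExtnHopfAlgs}*{proposition~2.3} recalled above upgrades this to the statement that each $H(\lambda)$ is a \emph{symmetric} Frobenius algebra. The remaining ingredients of condition~(a)---namely $H(\lambda_0)=\k$ and $H=\bigcup_\lambda H(\lambda)$---are exactly the hypotheses, and the comparison maps are genuine inclusions of subHopf algebras; one discards indices giving no strict increase (or replaces $\Lambda$ by the poset of distinct subalgebras) so that the inclusions $\iota_{\lambda'}^{\lambda''}$ are proper.

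Next I would verify condition~(b), that each inclusion $H(\lambda')\hookrightarrow H(\lambda'')$ (for $\lambda'\preccurlyeq\lambda''$) is a \emph{free} Frobenius extension. The Frobenius extension property follows at once from Proposition~\ref{prop:FrobExt-Unimod}: we have precisely an inclusion of symmetric Frobenius algebras in which the smaller, $H(\lambda')$, is augmented by the restriction of the augmentation on the larger, $H(\lambda'')$. For the freeness I would invoke the Nichols--Zoeller freeness theorem, which states that a finite dimensional Hopf algebra is free as a left and as a right module over any Hopf subalgebra; applied to the subHopf algebra $H(\lambda')\subseteq H(\lambda'')$ this gives that $H(\lambda'')$ is free over $H(\lambda')$ on both sides, as required.

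The step carrying the real weight is the Nichols--Zoeller theorem: Proposition~\ref{prop:FrobExt-Unimod} by itself yields only a Frobenius extension, hence two-sided projectivity (see also Proposition~\ref{prop:FrobExtn}), whereas Definition~\ref{defn:LocFrobAlg}(b) demands genuine freeness, and it is exactly the Hopf-algebraic input of Nichols--Zoeller that bridges this gap. The only bookkeeping subtlety is the properness of the inclusions noted above, which is harmless since it does not alter the colimit~$H$.
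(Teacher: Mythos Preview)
Your proof is correct and follows essentially the same route as the paper's own argument: Larson--Sweedler for the Frobenius property, Farnsteiner's criterion (involutive plus unimodular) for symmetry, and Nichols--Zoeller for freeness of each extension. Your explicit invocation of Proposition~\ref{prop:FrobExt-Unimod} for the Frobenius extension part and your remark on properness of inclusions are minor elaborations the paper leaves implicit or relegates to the comment following Definition~\ref{defn:LocFrobAlg}.
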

\begin{proof}
The Larson-Sweedler theorem~\cite{LarsonSweedlerThm}
implies that each $H(\lambda)$ is Frobenius,
while~\cite{RF:FrobExtnHopfAlgs}*{proposition~2.3}
implies that it is symmetric since the square of
the antipode is the identity which is also a Nakayama
automorphism. Finally, the
Nichols-Zoeller theorem~\cite{WDN&MBZ:HAfreeness}.
implies that each pair $H(\lambda''):H(\lambda')$
is free.
\end{proof}

Of course if each Hopf algebra $H(\lambda)$ is
commutative or cocommutative then it is involutive.

It is not necessarily true that an infinite
dimensional Hopf algebra~$H$ is free as a
left/right module over a finite dimensional
subHopf algebra~$K$, although some results
on when this holds are known; for the case
where~$K$ is semisimple see~\cite{WDN&MBR:HAfreenessinfdim},
while for the case where~$K$ is a normal
subalgebra see~\cite{H-JS:RemQGps}. However,
for our purposes this is not required.

Later we will discuss some examples of locally
Frobenius Hopf algebras such as group algebras
of locally finite groups.

The next definition builds on
Definition~\ref{defn:LocFrobSubAlg}.
\begin{defn}\label{defn:LocFrobSubHopfAlg}
Let $H$ be a locally Frobenius Hopf algebra of
shape $\Lambda$ and let
$(\Lambda',\preccurlyeq,\lambda_0)\subseteq(\Lambda,\preccurlyeq,\lambda_0)$
be a subdirected set. A subHopf algebra~$K\subseteq H$
is a \emph{locally Frobenius subHopf algebra
of shape $\Lambda'$} if $\ds K=\bigcup_{\lambda\in\Lambda'}K(\lambda)$ is
locally Frobenius Hopf algebra of shape $\Lambda'$
and for every $\lambda\in\Lambda'$,
$K(\lambda)\subseteq H(\lambda)$.
\end{defn}

We end this section with a result suggested
by Richardson~\cite{JSR:GpRngsnonzerosocle}*{lemma~2.2}.
This shows that the augmentation ideal of a
locally Frobenius subalgebra generates a
`large' submodule.
\begin{prop}\label{prop:JSR-lemma2.2}
Let $A$ be a locally Frobenius algebra of shape
$\Lambda$ and $B$ a locally Frobenius subalgebra
of shape $\Lambda'\subseteq\Lambda$. Then the
submodule $AB^+\subseteq A$ is essential.
\end{prop}
\begin{proof}
This proof is a straightfoward adaption of that
in Richardson~\cite{JSR:GpRngsnonzerosocle}.

Suppose that $Az\cap AB^+=0$ for some non-zero
$z\in A$. Choose $\alpha\in\Lambda$ such that
$z\in A(\alpha)$. As $\Lambda'$ is infinite
we can choose a $\beta\in\Lambda'$ for which
\[
\dim_\k A(\beta) >
\frac{\dim_\k A(\beta)}{\dim_\k A(\beta)z}.
\]
Now choose $\gamma\in\Lambda'$ so that
\[
A(\alpha)\subseteq A(\gamma)\supseteq A(\beta).
\]
and then
\[
A(\gamma)z\cap A(\gamma)A(\beta)^+
\subseteq Az\cap A(\gamma)B^+ = 0.
\]
Now recall that $A(\gamma)$ is a free module
over each of $A(\alpha)$ and $A(\beta)$, so
\[
A(\gamma)z \iso A(\gamma)\otimes_{A(\alpha)}A(\alpha)z,
\quad
A(\gamma)A(\beta)^+ \iso A(\gamma)\otimes_{A(\beta)}A(\beta)^+,
\]
giving
\begin{align*}
\dim_\k A(\gamma)z &=
\frac{\dim_\k A(\gamma)}{\dim_\k A(\alpha)}\dim_\k A(\alpha)z, \\
\dim_\k A(\gamma)A(\beta)^+ &=
\frac{\dim_\k A(\gamma)}{\dim_\k A(\beta)}\bigl(\dim_\k A(\beta)-1\bigr).
\end{align*}
Using these we obtain
\begin{align*}
\dim_\k A(\gamma) &\geq
\dim_\k\bigl(A(\gamma)z\oplus A(\gamma)A(\beta)^+\bigr) \\
&= \dim_\k A(\gamma)z + \dim_\k A(\gamma)A(\beta)^+ \\
&=
\frac{\dim_\k A(\gamma)\dim_\k A(\alpha)z}{\dim_\k A(\alpha)}
+
\frac{\dim_\k A(\gamma)\bigl(\dim_\k A(\beta)-1\bigr)}{\dim_\k A(\beta)} \\
&>
\frac{\dim_\k A(\gamma)}{\dim_\k A(\beta)}
+
\frac{\dim_\k A(\gamma)\bigl(\dim_\k A(\beta)-1\bigr)}{\dim_\k A(\beta)} \\
&= \dim_\k A(\gamma),
\end{align*}
which is impossible. Therefore $AB^+\subseteq A$
must be an essential submodule.
\end{proof}

\section{Modules over a locally Frobenius
algebra}\label{sec:LocFrob-CohMods}

Now we will describe some basic properties
of modules over locally Frobenius algebras.
Throughout this section we will suppose that~$A$
is a locally Frobenius of shape~$\Lambda$.

First we state a result for coherent $A$-modules;
since~$A$ is coherent these are exactly the \fp.
$A$-modules.
\begin{thm}\label{thm:fp-AbCat}
The category\/ $\Mod^{\mathrm{coh}}_A$ of coherent
$A$-modules and their homomorphisms is an abelian
category with all finite limits and colimits as
well as enough projectives and injectives.
\end{thm}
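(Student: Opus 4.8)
The plan is to reduce everything to three inputs coming from the filtered presentation $A=\bigcup_{\lambda}A(\lambda)$: that $A$ is coherent, that $A$ is faithfully flat over each $A(\lambda)$, and that each $A(\lambda)$ is self-injective. Coherence of $A$ and flatness of $A$ over each $A(\lambda)$ are immediate from Proposition~\ref{prop:Coherent}, since each $A(\lambda)$ is finite dimensional, hence Noetherian and coherent, and the transition maps are free and in particular flat. For faithful flatness I would use the criteria of Proposition~\ref{prop:FF}(a): it is enough to show $A\otimes_{A(\lambda)}(A(\lambda)/I)=A/AI$ is non-zero for every proper left ideal $I\subsetneq A(\lambda)$, and since $AI=\bigcup_{\mu\succcurlyeq\lambda}A(\mu)I$ while each $A(\mu)$ is free of positive rank, hence faithfully flat, over $A(\lambda)$, one has $A(\mu)I\subsetneq A(\mu)$ for all $\mu$, so $1\notin AI$. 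Finally, by Remark~\ref{rem:Coherence} the coherent $A$-modules are precisely the \fp. ones.

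For the abelian structure I would check the usual closure properties. Finite direct sums of coherent modules are coherent, and for a homomorphism $f\colon M\to N$ of coherent modules $\im f$ is a \fg. submodule of the coherent module $N$ and so is itself coherent (hence \fp.); Corollary~\ref{cor:Schanuel} then makes $\ker f$ \fg., hence coherent, while $\coker f$ is coherent as the quotient of $N$ by a \fg. submodule. Thus $\Mod^{\mathrm{coh}}_A$ is a full subcategory of $\Mod_A$ closed under kernels, cokernels and finite biproducts, with the inclusion exact; this is exactly the assertion (the Bourbaki reference in Remark~\ref{rem:Coherence}) that it is abelian. All finite limits and colimits then exist, since in an abelian category they are built from finite biproducts together with kernels and cokernels (equalizers as kernels of differences, coequalizers as cokernels), and these constructions do not leave the coherent modules.

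Enough projectives is straightforward: the \fg. free modules $A^{n}$ are coherent and projective in $\Mod_A$, hence projective in the exact subcategory, and every coherent module, being \fg., is a quotient of some $A^{n}$. For enough injectives I would establish an analogue of Proposition~\ref{prop:Frob-embeddingInj}(a) in two halves. First, every coherent $M$ embeds in a \fg. free module: spreading out a finite presentation gives $M\cong A\otimes_{A(\lambda)}M(\lambda)$ for a suitable finite dimensional $A(\lambda)$-module $M(\lambda)$, Proposition~\ref{prop:Frob-embeddingInj}(a) embeds $M(\lambda)$ in some $A(\lambda)^{r}$, and applying the flat functor $A\otimes_{A(\lambda)}(-)$ yields a monomorphism $M\hookrightarrow A^{r}$. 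Second, each $A^{r}$ is injective in $\Mod^{\mathrm{coh}}_A$.

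The second claim is where I expect the real work to lie. Given a monomorphism $\iota\colon M\hookrightarrow N$ of coherent modules and a homomorphism $f\colon M\to A^{r}$, I would spread the whole configuration out over a single $A(\lambda)$, writing $\iota=A\otimes_{A(\lambda)}\iota_\lambda$ and $f=A\otimes_{A(\lambda)}f_\lambda$. The delicate point is that $\iota_\lambda$ need not be injective a priori; however flatness gives $A\otimes_{A(\lambda)}\ker\iota_\lambda=\ker\iota=0$, and faithful flatness then forces $\ker\iota_\lambda=0$. Since $A(\lambda)$ is a Frobenius algebra, $A(\lambda)^{r}$ is self-injective by Corollary~\ref{cor:Frob-Inj=Proj}, so $f_\lambda$ extends along the monomorphism $\iota_\lambda$ to some $\tilde f_\lambda\colon N(\lambda)\to A(\lambda)^{r}$; base-changing, $A\otimes_{A(\lambda)}\tilde f_\lambda$ extends $f$ along $\iota$. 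Together with the embedding result this gives enough injectives. The genuinely delicate steps are thus the descent of a monomorphism to finite level, where faithful flatness over $A(\lambda)$ is essential, and the bookkeeping involved in spreading out the finite data, rather than any new idea.
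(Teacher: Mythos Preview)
Your proposal is correct and follows essentially the same route as the paper, which defers the abelian structure to the Bourbaki exercise and handles enough injectives via Lemma~\ref{lem:Ainj-fp}, Proposition~\ref{prop:fr-embedding}(a) and Lemma~\ref{lem:Proj&Inj} by spreading coherent data out to some $A(\lambda)$ and using the Frobenius property there. The only cosmetic difference is that in the extension step the paper keeps the target as $A$ and invokes injectivity of $A$ as an $A(\lambda)$-module (flat over a Frobenius algebra, Proposition~\ref{prop:set-up}(a)) together with the induction--restriction adjunction, whereas you descend the target to $A(\lambda)^{r}$ and use self-injectivity of $A(\lambda)$ directly; both arguments are valid and equivalent in spirit.
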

\begin{proof}
Clearly this category has finite products and
coproducts, so it is sufficient to check that
it has kernels, images and cokernels; this is
an exercise in
Bourbaki~\cite{Bourbaki:HomAlg}*{ex.~\S3.10(b)},
see also Cohen~\cite{JMC:Coherent}*{section~1}.
For projectives and injectives see
Lemma~\ref{lem:Proj&Inj} below.
\end{proof}

\begin{prop}\label{prop:set-up}{\ } \\
\emph{(a)} For each $\lambda\in\Lambda$,~$A$
is injective, projective and flat as a left
or right $A(\lambda)$-module. \\
\emph{(b)} The $\k$-algebra $A$ is coherent. \\
\emph{(c)} Suppose that~$M$ is a coherent $A$-module.
Then for some~$\lambda$ there is an $A(\lambda)$-module~$M'$
with a finite presentation
\[
0 \lla M' \lla A(\lambda)^k \lla A(\lambda)^\ell
\]
inducing a finite presentation
\[
0 \lla A\otimes_{A(\lambda)}M' \lla A^k\lla A^\ell
\]
where $A\otimes_{A(\lambda)}M'\iso M$.  \\
\emph{(d)}
Let $\phi\:M\to N$ be a homomorphisms of coherent
$A$-modules. Then there is a $\lambda\in\Lambda$,
and a homomorphism of finitely generated
$A(\lambda)$-modules $\phi''\:M''\to N''$ fitting
into a commutative diagram of $A$-module
homomorphisms.
\[
\xymatrix{
A\otimes_{A(\lambda)}M''\ar[rr]^{\Id\otimes\phi''}\ar@{<->}[d]_\iso
&& A\otimes_{A(\lambda)}N''\ar@{<->}[d]^\iso \\
M\ar[rr]^\phi && N
}
\]
\end{prop}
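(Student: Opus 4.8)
The plan is to treat (a) and (b) as structural facts about the filtered colimit $A=\bigcup_\lambda A(\lambda)$, and then to deduce (c) and (d) as a descent argument for finitely presented modules, the well-definedness in (d) being the delicate point.

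For (a), I would first observe that $\{\mu\in\Lambda:\lambda\preccurlyeq\mu\}$ is cofinal in $\Lambda$, so that $A=\colim_{\mu\succcurlyeq\lambda}A(\mu)$ as a left or right $A(\lambda)$-module. Each $A(\mu)$ in this system is free over $A(\lambda)$ by the Frobenius extension condition of Definition~\ref{defn:LocFrobAlg}(b), hence flat, and a filtered colimit of flat modules is flat; thus $A$ is flat as a left and as a right $A(\lambda)$-module. Since $A(\lambda)$ is a Frobenius algebra, Proposition~\ref{prop:Proj-Flat-Inj} upgrades flatness to projectivity and injectivity at once, proving (a). Part (b) is then immediate from Proposition~\ref{prop:Coherent}: the $A(\lambda)$ form a directed system of coherent rings (each being Frobenius, hence Noetherian, hence coherent) in which $A(\beta)$ is flat over $A(\alpha)$ whenever $\alpha\preccurlyeq\beta$, so their colimit $A$ is coherent on both sides.

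For (c), since $A$ is coherent by (b) the coherent module $M$ is \fp., so it has a finite presentation $A^\ell\to A^k\to M\to 0$; write $\psi\:A^\ell\to A^k$ for the first map, given by a $k\times\ell$ matrix over $A$. Its finitely many entries lie in a single $A(\lambda)$ by Lemma~\ref{lem:finitesubset}, so $\psi$ is the base change of a map $\psi'\:A(\lambda)^\ell\to A(\lambda)^k$. Putting $M'=\coker\psi'$ yields the finite presentation of the $A(\lambda)$-module $M'$ in the statement, and applying the functor $A\otimes_{A(\lambda)}(-)$, which is exact by the flatness from (a), gives the induced presentation with $A\otimes_{A(\lambda)}M'\iso\coker\psi=M$.

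For (d), I would use (c) to descend both modules to a common level, choosing $\lambda$ so large that $M\iso A\otimes_{A(\lambda)}M_0$ and $N\iso A\otimes_{A(\lambda)}N_0$ for finitely presented $A(\lambda)$-modules $M_0,N_0$ (take an upper bound of the two levels from (c) and base-change). If $e_1,\dots,e_k$ generate $M_0$, then the $1\otimes e_i$ generate $M$, and each $\phi(1\otimes e_i)$ lies in $N=\colim_{\mu\succcurlyeq\lambda}A(\mu)\otimes_{A(\lambda)}N_0$; hence for $\mu$ large all of these finitely many elements already lie in $N''=A(\mu)\otimes_{A(\lambda)}N_0$. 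Setting $M''=A(\mu)\otimes_{A(\lambda)}M_0$, I would try to define $\phi''\:M''\to N''$ by sending $1\otimes e_i$ to $\phi(1\otimes e_i)$. The hard part, and the main obstacle, is well-definedness: a relation $\sum_i r_i(1\otimes e_i)=0$ in $M''$ gives $\sum_i r_i\phi(1\otimes e_i)=0$ in $N$ by $A$-linearity of $\phi$, and to conclude the same identity already in $N''$ I need the natural map $N''=A(\mu)\otimes_{A(\lambda)}N_0\to A\otimes_{A(\lambda)}N_0=N$ to be injective. This holds because $A(\mu)$, being free over the Frobenius algebra $A(\lambda)$, is injective as an $A(\lambda)$-module by Proposition~\ref{prop:Proj-Flat-Inj}; hence the inclusion $A(\mu)\hookrightarrow A$ splits over $A(\lambda)$, and tensoring this split monomorphism with $N_0$ preserves injectivity. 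Once $\phi''$ is well defined it is an $A(\mu)$-homomorphism of finitely generated modules, and base-changing along $A\otimes_{A(\mu)}(-)$ recovers $\phi$ on the generators, hence everywhere, yielding the required commutative square (with $\mu$ in the role of $\lambda$).
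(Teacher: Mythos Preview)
Your proof is correct and follows essentially the same route as the paper's: flatness of $A$ over each $A(\lambda)$ via the filtered colimit of free modules, coherence via Proposition~\ref{prop:Coherent}, descent of a finite presentation by pushing the matrix entries into some $A(\lambda)$, and descent of $\phi$ by landing the images of generators in some $A(\mu)\otimes_{A(\lambda)}N_0$. The only notable difference is in (d): the paper simply restricts $\phi$ to the finite-dimensional image of $M'$ in $M$ and then base-changes via the induction--restriction adjunction, whereas you work with generators and relations and make explicit the injectivity of $A(\mu)\otimes_{A(\lambda)}N_0\hookrightarrow A\otimes_{A(\lambda)}N_0$ (via the split monomorphism $A(\mu)\hookrightarrow A$ of $A(\lambda)$-modules), a point the paper uses but leaves implicit.
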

\begin{proof}
We will give brief indications of the proofs.  \\
(a) By Proposition~\ref{prop:Coherent}
$A$ is a flat $A(\lambda)$-module, hence
by Proposition~\ref{prop:Proj-Flat-Inj}
it is also injective and projective. \\
(b) Since each $A(\lambda)$ is Noetherian
over~$\k$, Proposition~\ref{prop:Coherent}
implies that~$A$ is left and right coherent.  \\
(c) If $M$ is an \fp. $A$-module, there
is an exact sequence of $A$-modules
\[
0 \lla M \xleftarrow{\rho_0} A^m \xleftarrow{\rho_1}A^n
\]
for some $m,n$. The image of $\rho_1$ must
be contained in $A(\lambda)^m\subseteq A^m$
for some $\lambda\in\Lambda$, so we obtain
an exact sequence of $A(\lambda)$-modules
\[
0 \lla M' \xleftarrow{\rho'_0}A(\lambda)^m
\xleftarrow{\rho'_1} A(\lambda)^n
\]
and on tensoring with $A$, by (a) this yields
an exact sequence of $A$-modules
\[
0 \lla A\otimes_{A(\lambda)}M'
\xleftarrow{\Id\otimes\rho'_0} A\otimes_{A(\lambda)}A(\lambda)^m
\xleftarrow{\Id\otimes\rho'_1} A\otimes_{A(\lambda)}A(\lambda)^n
\]
which is equivalent to the original one.     \\
(d) Using (c) we can represent $M$ and $N$
as induced up from \fg. $A(\lambda_1)$-modules
$M',N'$ for some~$\lambda_1$. The image of the
restriction of~$\phi$ to~$M'$ in $A\otimes_{A(\lambda_1)}N'$
lies in some $A(\lambda_2)\otimes_{A(\lambda_1)}N'$
where $\lambda_1\preccurlyeq\lambda_2$. Base
changing gives a homomorphism
\[
\phi''\:M''=A(\lambda_2)\otimes_{A(\lambda_1)}M'
\to A(\lambda_2)\otimes_{A(\lambda_1)}N'=N''
\]
with the required properties.
\end{proof}

The faithful flatness condition of Proposition~\ref{prop:A-Faithfullyflat}
gives another useful property.
\begin{cor}\label{cor:set-up-SES}
Every short exact sequence of coherent $A$-modules
\begin{equation}\label{eq:set-up-SES}
0\to L\xrightarrow{\phi} M\xrightarrow{\theta}N\to0
\end{equation}
is induced up from a short exact sequence of \fg.
$A(\alpha)$-modules
\[
0\to L'\xrightarrow{\phi'} M'\xrightarrow{\theta'}N'\to0
\]
for some $\alpha\in\Lambda$, i.e., there is
a commutative diagram of $A$-modules of the
following form.
\[
\xymatrix{
0\ar[r] & L\ar[r]^\phi\ar@{<->}[d]_\iso
& M\ar[r]^\theta\ar@{<->}[d]_\iso
& N\ar[r]\ar@{<->}[d]_\iso
& 0 \\
0\ar[r] & A\otimes_{A(\alpha)}L'\ar[r]^{\Id\otimes\phi'}
& A\otimes_{A(\alpha)}M'\ar[r]^{\Id\otimes\theta'}
& A\otimes_{A(\alpha)}N'\ar[r]
& 0
}
\]
In particular, an isomorphism $\phi\:L\to M$
is induced up from an isomorphism of \fg.
$A(\alpha)$-modules for some $\alpha\in\Lambda$.
\end{cor}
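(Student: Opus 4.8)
The plan is to descend the whole sequence~\eqref{eq:set-up-SES} to a single finite level~$\alpha$ and then exploit the faithful flatness of~$A$ over~$A(\alpha)$ provided by Proposition~\ref{prop:A-Faithfullyflat} to transfer exactness in both directions. First I would produce, for a suitably large common index~$\alpha$, finitely generated $A(\alpha)$-modules $L',M',N'$ together with $A(\alpha)$-homomorphisms $\phi'\:L'\to M'$ and $\theta'\:M'\to N'$ whose induction up to~$A$ reproduces the maps $\phi$ and $\theta$ of~\eqref{eq:set-up-SES} up to the displayed isomorphisms. This is essentially the content of Proposition~\ref{prop:set-up}(d) applied to each of the two maps, the only wrinkle being that the two applications a priori present the middle term~$M$ from possibly different levels, so one must fuse them.

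Concretely I would first invoke Proposition~\ref{prop:set-up}(c) to realise all three of $L,M,N$ as induced up from finitely generated modules $L_1,M_1,N_1$ over a single $A(\lambda_1)$, which is legitimate since $\Lambda$ is filtered and one may take an upper bound of the three indices produced by~(c). Writing $1\otimes M_1\subseteq A\otimes_{A(\lambda_1)}M_1\iso M$ for the canonical copy of the generating module, the restriction of~$\phi$ to the (finite) generating set of~$L_1$ has image in some $A(\lambda_2)\otimes_{A(\lambda_1)}M_1$, and likewise $\theta$ restricted to the generators of~$M_1$ has image in some $A(\lambda_3)\otimes_{A(\lambda_1)}N_1$; taking $\alpha\succcurlyeq\lambda_2,\lambda_3$ and base changing everything along $A(\lambda_1)\hookrightarrow A(\alpha)$ yields the desired $\phi'$ and $\theta'$ between finitely generated $A(\alpha)$-modules, with induction recovering $\phi,\theta$. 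This is the step I expect to be the only genuinely fiddly point, since it requires keeping the representation of the middle term~$M$ consistent across both maps; everything afterwards is formal.

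It then remains to check that $0\to L'\xrightarrow{\phi'}M'\xrightarrow{\theta'}N'\to0$ is itself short exact. Since $A$ is faithfully flat over $A(\alpha)$, Proposition~\ref{prop:FF}(b) lets me reflect the injectivity of~$\phi=\Id_A\otimes\phi'$ and the surjectivity of~$\theta=\Id_A\otimes\theta'$ down to $\phi'$ and $\theta'$, while the zero-map criterion of Proposition~\ref{prop:FF}(a) applied to $\theta\phi=\Id_A\otimes(\theta'\phi')=0$ gives $\theta'\phi'=0$, so that we at least have a complex. For exactness in the middle, flatness of~$A$ means $\Id_A\otimes(-)$ commutes with the homology of $L'\to M'\to N'$, so that $A\otimes_{A(\alpha)}(\ker\theta'/\im\phi')\iso\ker\theta/\im\phi=0$; the vanishing criterion in Proposition~\ref{prop:FF}(a) then forces $\ker\theta'/\im\phi'=0$. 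Because $A(\alpha)$ is Noetherian these finitely generated modules are automatically coherent, so no extra finiteness hypothesis is needed. Finally, the concluding claim that an isomorphism descends is the special case obtained by applying Proposition~\ref{prop:set-up}(d) to the single map~$\phi$ and reflecting the isomorphism through faithful flatness via Proposition~\ref{prop:FF}(b).
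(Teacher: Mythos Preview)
Your proof is correct and follows essentially the same approach as the paper: descend the sequence to a finite level~$\alpha$ using Proposition~\ref{prop:set-up}(c),(d), then use faithful flatness of~$A$ over~$A(\alpha)$ to deduce exactness of the descended sequence. The only difference is packaging: the paper invokes the first characterisation in Proposition~\ref{prop:FF}(a) (a sequence is short exact if and only if it becomes so after tensoring with a faithfully flat module) in a single stroke, whereas you unpack this into separate checks of injectivity, surjectivity, $\theta'\phi'=0$, and middle exactness via the other criteria in Proposition~\ref{prop:FF}; your explicit handling of the consistency of the middle term~$M$ across the two descents is a detail the paper leaves implicit.
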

\begin{proof}
Choose $\alpha\in\Lambda$ so that there are
homomorphisms of \fg. $A(\alpha)$-modules
\[
L'\xrightarrow{\phi'}M'\xrightarrow{\theta'}N'
\]
such that
\[
0\to
A\otimes_{A(\alpha)}L'\xrightarrow{\Id\otimes\phi'}A\otimes_{A(\alpha)}M'
\xrightarrow{\Id\otimes\theta'}A\otimes_{A(\alpha)}N'
\to0
\]
corresponds to the short exact sequence~\eqref{eq:set-up-SES}.
Then by Proposition~\ref{prop:FF}(a),
\[
0\to L'\xrightarrow{\phi'} M'\xrightarrow{\theta'}N'\to0
\]
is a short exact sequence of $A(\alpha)$-modules.

The statement about isomorphisms also follows using
faithful flatness of the $A(\alpha)$-module~$A$ and
Proposition~\ref{prop:FF}(b).
\end{proof}

Now we give some results on the dimension of
coherent $A$-modules.
\begin{lem}\label{lem:A/ALfindim}
Let $\lambda\in\Lambda$ and let $L\subseteq A(\lambda)$
be a left ideal. Then as left $A$-modules,
\[
A\otimes_{A(\lambda)}A(\lambda)/L \iso A/AL.
\]
and the $\k$-vector space $A/AL$ is infinite dimensional.
\end{lem}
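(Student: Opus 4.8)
The plan is to treat the two assertions separately: the isomorphism comes straight from flatness, while the infinite-dimensionality comes from a rank count over the approximating Frobenius subalgebras, combined with a splitting argument to push that count into the colimit.

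First I would establish the isomorphism. Apply the functor $A\otimes_{A(\lambda)}(-)$, which is exact because $A$ is flat as a right $A(\lambda)$-module by Proposition~\ref{prop:set-up}(a), to the short exact sequence of left $A(\lambda)$-modules
\[
0\to L\to A(\lambda)\to A(\lambda)/L\to 0.
\]
Under the canonical isomorphism $A\otimes_{A(\lambda)}A(\lambda)\iso A$ of left $A$-modules, $a\otimes b\mapsto ab$, the image of $A\otimes_{A(\lambda)}L$ is exactly the left ideal $AL$, and flatness makes $A\otimes_{A(\lambda)}L\to A$ injective, so $A\otimes_{A(\lambda)}L\iso AL$. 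Passing to cokernels gives the asserted $A\otimes_{A(\lambda)}A(\lambda)/L\iso A/AL$.

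For the dimension claim I may assume $L$ is proper, since otherwise $A(\lambda)/L=0$ and the quotient vanishes. Fix $\mu$ with $\lambda\preccurlyeq\mu$; then $A(\mu)$ is free of finite rank $r_\mu=\dim_\k A(\mu)/\dim_\k A(\lambda)$ as a right $A(\lambda)$-module, so
\[
\dim_\k\bigl(A(\mu)\otimes_{A(\lambda)}A(\lambda)/L\bigr)=r_\mu\,\dim_\k\bigl(A(\lambda)/L\bigr).
\]
Because every element of $\Lambda$ has a strictly larger one and all the structure inclusions are proper, I can build a chain $\lambda=\mu_0\precneqq\mu_1\precneqq\cdots$ along which $\dim_\k A(\mu_n)$, and hence $r_{\mu_n}$, tends to infinity.

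It then remains to transfer this unbounded growth to $A\otimes_{A(\lambda)}A(\lambda)/L$. Since $A(\lambda)$ is a Frobenius algebra, the right $A(\lambda)$-module $A(\mu)$, being free and hence projective, is also injective by Proposition~\ref{prop:Proj-Flat-Inj}; therefore each inclusion $A(\mu)\hookrightarrow A$ of right $A(\lambda)$-modules splits. Tensoring a split injection with $A(\lambda)/L$ remains injective, so for every $n$
\[
\dim_\k\bigl(A/AL\bigr)=\dim_\k\bigl(A\otimes_{A(\lambda)}A(\lambda)/L\bigr)\geq r_{\mu_n}\,\dim_\k\bigl(A(\lambda)/L\bigr),
\]
and the right-hand side is unbounded, giving the infinite-dimensionality. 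The one point demanding genuine care is the use of \emph{freeness} rather than mere projectivity in the rank count: a projective right $A(\lambda)$-module $C$ can satisfy $C\otimes_{A(\lambda)}A(\lambda)/L=0$ (for instance with $A(\lambda)=\k\times\k$ and $L$ one of the two factors), so it is essential that each $A(\mu)$ is genuinely free in order to guarantee the contribution $r_\mu\dim_\k(A(\lambda)/L)$.
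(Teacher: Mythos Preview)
Your proof is correct and follows the same strategy as the paper: flatness of $A$ over $A(\lambda)$ for the isomorphism, and the unbounded ranks $r_\mu=\dim_\k A(\mu)/\dim_\k A(\lambda)$ for infinite-dimensionality. Your splitting argument (using that the free, hence injective, right $A(\lambda)$-module $A(\mu)$ is a retract of $A$) makes explicit the passage from the finite-stage dimensions to the colimit, a step the paper compresses into the terse observation that $\rank_{A(\lambda)}A(\beta)$ is strictly increasing in~$\beta$.
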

\begin{proof}
By a standard criterion for flatness
of~\cite{TYL:LectModules&Rings}*{(4.12)}, multiplication
induces an isomorphism
\[
A\otimes_{A(\lambda)}L\xrightarrow{\iso}AL
\]
where $AL\subseteq A$ is the left ideal generated
by~$L$. Therefore there is a commutative diagram
of left $A$-modules
\[
\xymatrix{
0\ar[r] &  A\otimes_{A(\lambda)}L\ar[r]\ar@{<->}[d]_\iso
& A\otimes_{A(\lambda)}A(\lambda)\ar[r]\ar@{<->}[d]_\iso
& A\otimes_{A(\lambda)}A(\lambda)/L\ar[r]\ar[d] & 0  \\
0\ar[r] &  AL\ar[r] & A\ar[r] & A/AL\ar[r] & 0
}
\]
with exact rows. It follows that the right hand
vertical arrow is an isomorphism.

Whenever $\alpha\preccurlyeq\beta$, $A(\beta)$
is a finite rank free $A(\alpha)$-module, therefore
\[
\dim_\k A(\beta) = \dim_\k A(\alpha)\rank_{A(\alpha)} A(\beta)
\]
and $\rank_{A(\alpha)} A(\beta)\leq\rank_{A(\alpha)}A$
where is strictly increasing as a function of~$\beta$.
\end{proof}

\begin{prop}\label{prop:cohmod-infdim}
Let $M$ be a coherent left/right $A$-module. Then~$M$
is an infinite dimensional\/ $\k$-vector space.
%
%
\end{prop}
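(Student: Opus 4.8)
The plan is to represent $M$ as induced up from a finite-dimensional module over some $A(\lambda)$ and then to detect its infinitude by pushing forward a simple quotient, reducing everything to Lemma~\ref{lem:A/ALfindim}. Throughout I treat the left-module case, the right-module case being entirely symmetric via the right-handed analogues of Lemma~\ref{lem:A/ALfindim} and Proposition~\ref{prop:set-up}(c). (A nonzero module is intended in the statement, since the zero module is vacuously coherent but finite dimensional.) First I would invoke Proposition~\ref{prop:set-up}(c) to write $M\iso A\otimes_{A(\lambda)}M'$ for some $\lambda\in\Lambda$ and some \fg.\ $A(\lambda)$-module~$M'$. As $A(\lambda)$ is a Frobenius algebra it is \fd.\ over~$\k$, so $M'$ is a \fd.\ $\k$-vector space; and since $M\neq0$ we must have $M'\neq0$.

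Next, because $M'$ is a nonzero \fd.\ (in particular Artinian) $A(\lambda)$-module, it has a maximal proper submodule, which gives a surjection $M'\twoheadrightarrow S$ onto a simple module $S\iso A(\lambda)/L$ for some maximal left ideal $L\subseteq A(\lambda)$. Applying the right-exact functor $A\otimes_{A(\lambda)}(-)$ then yields a surjection of left $A$-modules
\[
M\iso A\otimes_{A(\lambda)}M'\twoheadrightarrow A\otimes_{A(\lambda)}A(\lambda)/L\iso A/AL,
\]
where the final isomorphism is supplied by Lemma~\ref{lem:A/ALfindim}.

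Finally, Lemma~\ref{lem:A/ALfindim} also asserts that $A/AL$ is infinite dimensional over~$\k$; since $M$ surjects onto this infinite-dimensional space, $M$ itself must be infinite dimensional, which completes the argument. I expect no genuine obstacle here: the only point requiring care is to check that the induced-up module~$M'$ is nonzero and admits a simple quotient of the form $A(\lambda)/L$, so that Lemma~\ref{lem:A/ALfindim} applies; the substantive input (flatness of~$A$ over $A(\lambda)$ from Proposition~\ref{prop:set-up}(a), together with the infinite-dimensionality of $A/AL$) has already been established.
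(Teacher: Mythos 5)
Your argument is correct. It rests on the same key input as the paper's proof, namely Lemma~\ref{lem:A/ALfindim}, and like the paper it works by exhibiting a surjection from $M$ onto a cyclic module of the form $A/AL$ with $L$ a proper left ideal of some $A(\lambda)$; but the reduction is carried out differently. The paper first quotients $M$ by the submodule generated by $m-1$ elements of a minimal generating set to obtain a non-trivial cyclic coherent module $A/L$ with $L$ a \fg. left ideal of $A$, and then observes that $L=AL(\lambda)$ for $L(\lambda)=A(\lambda)\cap L$ once the generators of $L$ are trapped in some $A(\lambda)$. You instead start from the induced-up description $M\iso A\otimes_{A(\lambda)}M'$ of Proposition~\ref{prop:set-up}(c), pass to a simple quotient $M'\twoheadrightarrow A(\lambda)/L$ of the \fd. module $M'$, and apply $A\otimes_{A(\lambda)}(-)$. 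Your route avoids both the minimal-generating-set argument and the verification that $AL(\lambda)=L$, at the cost of invoking the full strength of Proposition~\ref{prop:set-up}(c); the two are of essentially equal length and difficulty. The only point needing care in either version is that Lemma~\ref{lem:A/ALfindim} implicitly requires $L$ to be a \emph{proper} ideal of $A(\lambda)$ (otherwise $A/AL=0$); your $L$ is maximal, hence proper, so this is satisfied.
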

\begin{proof}
We will assume that $M$ is a left module,
the proof when it is a right module is
similar.

When $M$ is cyclic, $M\iso A/L$ for some
\fg. left ideal~$L$. A finite set of
generators of $L$ must lie in some~$A(\lambda)$
so the ideal $L(\lambda)=A(\lambda)\cap L$
satisfies $AL(\lambda) = L$ and by
Lemma~\ref{lem:A/ALfindim}, $M\iso AL(\lambda)$
is infinite dimensional.

Now for a general coherent module, suppose
that~$M$ has~$m$ generators $x_1,\dots,x_m$
where~$m$ is minimal. Then for the proper
submodule $M'\subset M$ generated by
$x_1,\dots,x_{m-1}$, $M/M'$ is a non-trivial
cyclic coherent module which is infinite
dimensional. Since there is an epimorphism
$M\to M/M'$, $M$ must be infinite dimensional.
%
%
\end{proof}

This gives an important fact about \fd. modules.
%
%
%
\begin{prop}\label{prop:fd-noncoherent}
A non-trivial \fd. $A$-module is not coherent.
\end{prop}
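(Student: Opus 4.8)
The plan is to deduce this immediately from Proposition~\ref{prop:cohmod-infdim}, arguing by contraposition. Suppose $M$ is a non-trivial \fd. $A$-module and, for contradiction, that it is coherent. Then Proposition~\ref{prop:cohmod-infdim} applies and forces $M$ to be an infinite dimensional $\k$-vector space, which directly contradicts the assumption that $\dim_\k M$ is finite. Hence no non-trivial \fd. module can be coherent.

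The only point that needs care is the role of the non-triviality hypothesis. The zero module is coherent (it is trivially \fg. and pseudo-coherent) yet finite dimensional, so some hypothesis excluding it is unavoidable; the statement of Proposition~\ref{prop:cohmod-infdim} is really about non-zero coherent modules, as its proof---which runs through cyclic quotients $A/L$ and a minimal generating set of size $m\geq 1$---tacitly assumes. Requiring $M\neq 0$ is exactly what lets us invoke that result.

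I do not expect any genuine obstacle here: the substantive work has already been carried out in establishing that every non-trivial coherent $A$-module is infinite dimensional, and the present statement is simply the contrapositive reading of that fact for finite dimensional modules.
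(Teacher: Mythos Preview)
Your proposal is correct and matches the paper's approach exactly: the paper gives no explicit proof, instead introducing the proposition with ``This gives an important fact about \fd. modules,'' indicating it is an immediate consequence of Proposition~\ref{prop:cohmod-infdim} by contraposition. Your observation about the non-triviality hypothesis excluding the zero module is a reasonable clarification.
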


For simple modules we have the following.
\begin{cor}\label{cor:fd-simple-noncoherent}
Let $S=A/L$ be a \fd. simple left/right $A$-module,
where~$L$ is a maximal left/right ideal of~$A$.
Then~$S$ is not coherent and~$L$ is not finitely
generated. In particular, the trivial $A$-module\/~$\k$
is not coherent and the augmentation ideal~$A^+$
is not \fg. as a left/right module.
\end{cor}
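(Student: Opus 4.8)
The plan is to obtain everything formally from Proposition~\ref{prop:fd-noncoherent} together with coherence of the ring~$A$. First, since $S=A/L$ is simple it is nonzero, and it is finite dimensional by hypothesis; thus $S$ is a non-trivial \fd.\ $A$-module, and Proposition~\ref{prop:fd-noncoherent} immediately gives that $S$ is not coherent.

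Next I would deduce that $L$ cannot be \fg.\ by playing non-coherence of~$S$ against coherence of~$A$. Regarding $A$ as a cyclic free module over itself, the quotient map gives a short exact sequence $0\to L\to A\to S\to 0$. If $L$ were \fg., a finite generating set would yield an exact sequence $A^n\to A\to S\to 0$, that is, a finite presentation of~$S$, so $S$ would be \fp. But $A$ is coherent by Proposition~\ref{prop:set-up}(b), and over a coherent ring being \fp.\ is equivalent to being coherent (Remark~\ref{rem:Coherence}); hence $S$ would be coherent, contradicting the first step. Therefore $L$ is not \fg. The argument for right modules is word-for-word the same.

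Finally, the statements about the trivial module are the special case $L=A^+$. The augmentation ideal~$A^+$ is a maximal (two-sided, hence left and right) ideal with $A/A^+\iso\k$, and $\k$ is one dimensional and simple; applying the above with $S=\k$ shows that $\k$ is not coherent and that $A^+$ is not \fg.\ as a left or right module. There is no real obstacle here: the content lies entirely in the preceding infinite-dimensionality results, and the only point requiring care is that the equivalence of \fp.\ and coherent modules is invoked precisely because $A$ itself is coherent.
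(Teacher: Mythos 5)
Your proof is correct and follows exactly the route the paper intends: the paper leaves this corollary without an explicit proof, treating it as immediate from Proposition~\ref{prop:fd-noncoherent}, and your two supplementary steps (that a \fg.\ kernel $L$ would make $S=A/L$ finitely presented, hence coherent since $A$ is a coherent ring) are precisely the standard details being suppressed. The specialisation to $S=\k$ and $L=A^+$ is also handled correctly.
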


Of course this shows that a locally Frobenius
algebra is never Noetherian. As we will see
later, it also implies that the trivial
module is not isomorphic to a submodule
of~$A$.
\begin{cor}\label{cor:A/fgideal-nilpotent}
Let $z\in A$ be nilpotent. Then $A/Az$
and $A/zA$ are both infinite dimensional.
\end{cor}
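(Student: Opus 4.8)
The plan is to deduce both statements from Lemma~\ref{lem:A/ALfindim}. I treat the left quotient $A/Az$ in detail; the right quotient $A/zA$ follows by the symmetric (right-module) version of that lemma.

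First I would pin $z$ down inside a single Frobenius layer: since $\{z\}$ is finite, Lemma~\ref{lem:finitesubset} yields a $\lambda\in\Lambda$ with $z\in A(\lambda)$. Put $L=A(\lambda)z$, the principal left ideal of $A(\lambda)$ generated by~$z$, so that $L\subseteq A(\lambda)$ is a left ideal to which Lemma~\ref{lem:A/ALfindim} can be applied. The key identification is $AL=Az$: because $1\in A(\lambda)$ we have $A\cdot A(\lambda)=A$, and hence $AL=A(A(\lambda)z)=(A\cdot A(\lambda))z=Az$. Lemma~\ref{lem:A/ALfindim} then gives $A/Az=A/AL\iso A\otimes_{A(\lambda)}A(\lambda)/L$ together with the assertion that this is infinite dimensional.

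The only place the nilpotency hypothesis enters is in guaranteeing that the quotient is proper, i.e.\ $L\neq A(\lambda)$ so that $A(\lambda)/L\neq0$; without this the conclusion of Lemma~\ref{lem:A/ALfindim} is vacuous (if $L=A(\lambda)$ then $A/AL=0$). Here I use that a nilpotent element lies in the augmentation ideal: from $z^n=0$ and $\k$ a field we get $\epsilon(z)^n=\epsilon(z^n)=0$, hence $\epsilon(z)=0$ and $z\in A(\lambda)^+$. Therefore $L=A(\lambda)z\subseteq A(\lambda)^+\subsetneq A(\lambda)$, so $A(\lambda)/L$ is a nonzero $A(\lambda)$-module and the infinite-dimensionality in Lemma~\ref{lem:A/ALfindim} genuinely applies, giving $\dim_\k A/Az=\infty$.

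I expect no serious obstacle: the argument is essentially a one-line reduction to Lemma~\ref{lem:A/ALfindim}. The only points requiring care—both minor—are the bookkeeping identity $AL=Az$ (using $1\in A(\lambda)$) and checking that the quotient is nontrivial, which is exactly what nilpotency supplies via $z\in A(\lambda)^+$; the passage to $A/zA$ then merely repeats the computation with right ideals and the right-module form of the lemma.
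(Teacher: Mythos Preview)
Your argument is correct and is essentially the paper's own proof, just unwound one layer. The paper's one-line proof (``Every nilpotent element of~$A$ is contained in the completely prime ideal~$A^+$'') is meant to be read as: $z\in A^+$ forces $Az\subsetneq A$, so $A/Az$ is a non-trivial cyclic (hence coherent) $A$-module, and Proposition~\ref{prop:cohmod-infdim} gives infinite dimensionality. Since the cyclic case of Proposition~\ref{prop:cohmod-infdim} is precisely Lemma~\ref{lem:A/ALfindim}, your direct appeal to that lemma amounts to the same argument with the intermediate packaging removed; your verification that $z\in A(\lambda)^+$ via $\epsilon(z)^n=0$ is exactly the content of the paper's remark that nilpotents lie in~$A^+$.
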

\begin{proof}
Every nilpotent element of~$A$ is contained
in the completely prime ideal~$A^+$.
\end{proof}

We end this section with another important
observation.
\begin{prop}\label{prop:A-Faithfullyflat}
For $\alpha,\beta\in\Lambda$,
\begin{itemize}
\item
if $\alpha\preccurlyeq\beta$ then $A(\beta)$
is a faithfully flat $A(\alpha)$-module;
\item
$A$ is a faithfully flat $A(\alpha)$-module.
\end{itemize}
\end{prop}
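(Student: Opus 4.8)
The plan is to deduce the second (colimit) statement from the first, which is nearly immediate, and to concentrate all the real work on the assertion that $A\m\neq A$ for every maximal left ideal $\m$ of $A(\alpha)$.

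First I would dispose of the first bullet. When $\alpha\preccurlyeq\beta$, Definition~\ref{defn:LocFrobAlg}(b) tells us that $A(\beta)$ is free---hence flat and nonzero, since $1\in A(\beta)$---as a left and as a right $A(\alpha)$-module. A nonzero free module is faithfully flat: if $F\iso A(\alpha)^{(I)}$ with $I\neq\emptyset$, then $F\otimes_{A(\alpha)}M\iso M^{(I)}$, which vanishes if and only if $M=0$, so the second criterion of Proposition~\ref{prop:FF}(a) applies. This gives faithful flatness of $A(\beta)$ over $A(\alpha)$ on both sides.

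For the second bullet, recall from Proposition~\ref{prop:set-up}(a) that $A$ is flat as a left and as a right $A(\alpha)$-module, so by Proposition~\ref{prop:FF}(a) it remains to check that $A\otimes_{A(\alpha)}M\neq0$ whenever $M\neq0$; I treat $A$ as a right $A(\alpha)$-module and $M$ as a left one, the other side being symmetric. Choosing $0\neq m\in M$, the cyclic submodule $A(\alpha)m\iso A(\alpha)/I$ with $I$ a proper left ideal embeds into $M$, and flatness of $A$ turns this into an embedding $A\otimes_{A(\alpha)}(A(\alpha)/I)\hookrightarrow A\otimes_{A(\alpha)}M$. By right exactness $A\otimes_{A(\alpha)}(A(\alpha)/I)\iso A/AI$, so it suffices to show $AI\neq A$.

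The crux is this last inequality, and here I would exploit the finiteness built into the directed union rather than any colimit formula. Enlarging $I$ to a maximal left ideal $\m\supseteq I$ of the finite dimensional algebra $A(\alpha)$, we have $AI\subseteq A\m$, so it is enough to see that $1\notin A\m$. If instead $1\in A\m$, then $1=\sum_j a_jm_j$ for finitely many $a_j\in A$ and $m_j\in\m$; by Lemma~\ref{lem:finitesubset} the $a_j$ lie in some $A(\gamma)$, and choosing $\beta$ an upper bound of $\gamma$ and $\alpha$ we get $1\in A(\beta)\m$, i.e.\ $A(\beta)\m=A(\beta)$. But the first bullet says $A(\beta)$ is faithfully flat over $A(\alpha)$, so $A(\beta)\otimes_{A(\alpha)}(A(\alpha)/\m)=A(\beta)/A(\beta)\m\neq0$, that is $A(\beta)\m\neq A(\beta)$---a contradiction. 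I expect this reduction to a finite stage to be the only subtle point: one cannot simply assert that the filtered colimit of the faithfully flat modules $A(\beta)$ is faithfully flat, since faithful flatness is not preserved by arbitrary filtered colimits. What rescues the argument is that any witness to $1\in A\m$ is a finite sum and is therefore already visible inside a single $A(\beta)$, where the freeness of the Frobenius extension applies.
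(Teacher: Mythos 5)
Your proof is correct, but it takes a genuinely different route from the paper's. The paper handles both bullets uniformly via self-injectivity: since the Frobenius algebra $A(\alpha)$ is self-injective, the inclusions $A(\alpha)\hookrightarrow A(\beta)$ and $A(\alpha)\hookrightarrow A$ split as $A(\alpha)$-module homomorphisms, so for any $M\neq0$ the unit $M\to A(\beta)\otimes_{A(\alpha)}M$ (respectively $M\to A\otimes_{A(\alpha)}M$) is a split monomorphism and the target is nonzero; faithfulness follows in one line for both cases. You instead get the first bullet from the bare freeness in Definition~\ref{defn:LocFrobAlg}(b), and for the second you reduce to cyclic modules, identify $A\otimes_{A(\alpha)}(A(\alpha)/I)$ with $A/AI$, and run a finite-witness argument: any relation $1=\sum_j a_jm_j$ exhibiting $A\m=A$ lives in some $A(\beta)$, where it contradicts faithful flatness of the finite stage. (The enlargement of $I$ to a maximal ideal $\m$ is harmless but unnecessary; the same argument works for any proper left ideal $I$ of $A(\alpha)$.) What your approach buys is independence from self-injectivity: it would prove faithful flatness of the union for any directed system of algebras in which each stage is free, or even just faithfully flat, over its predecessors, with no Frobenius hypothesis. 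What the paper's approach buys is brevity and the stronger conclusion that $M\to A\otimes_{A(\alpha)}M$ is split injective, not merely that the target is nonzero. Your closing remark about filtered colimits not preserving faithful flatness in general is accurate and correctly identifies why some such device, either your finite-witness reduction or the paper's splitting, is genuinely needed.
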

\begin{proof}
We know that $A(\beta)$ and $A$ are flat as
$A(\alpha)$-modules; we also know that the
inclusions $A(\alpha)\hookrightarrow A(\beta)$
and $A(\alpha)\hookrightarrow A$ split as
$A(\alpha)$-module homomorphisms since
$A(\alpha)$ is self-injective.

For an $A(\alpha)$-module $M$, the unit induces
a split homomorphism of $A(\alpha)$-modules
$M\to A(\beta)\otimes_{A(\alpha)}M$, showing
that $A(\beta)$ is a faithful $A(\alpha)$-module.

\[
\xymatrix{
M\ar[r]_(.3)\iso\ar@/^15pt/[rr]\ar@/_23pt/[drr]_\iso & A(\alpha)\otimes_{A(\alpha)}M\ar[r]
& A(\beta)\otimes_{A(\alpha)}M\ar@{-->}[d]   \\
&& A(\alpha)\otimes_{A(\alpha)}M
}
\]
A similar argument with $A$ in place of~$A(\beta)$
shows that~$A$ is also faithful.
\end{proof}

Combining Lemma~\ref{prop:FF}(c) with
Proposition~\ref{prop:A-Faithfullyflat} we
obtain
\begin{cor}\label{cor:A-reflectssimple}
Let $\alpha,\beta\in\Lambda$ and let $M$ be
a left $A(\alpha)$-module. Then
\begin{itemize}
\item
for $\alpha\preccurlyeq\beta$, if
$A(\beta)\otimes_{A(\alpha)}M$ is a simple
$A(\beta)$-module then $M$ is simple;
\item
if $A\otimes_{A(\alpha)}M$ is a simple
$A$-module then~$M$ is simple.
\end{itemize}
\end{cor}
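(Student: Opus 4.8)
The plan is to deduce both statements as immediate formal consequences of Proposition~\ref{prop:FF}(c), whose hypotheses are precisely what Proposition~\ref{prop:A-Faithfullyflat} supplies. Recall that Proposition~\ref{prop:FF}(c) asserts: given a ring homomorphism $R\to S$ making $S$ a faithfully flat right $R$-module, and given a left $R$-module $M$ for which $S\otimes_R M$ is a simple $S$-module, the module $M$ is itself simple. So the task reduces to identifying, in each of the two bullets, the relevant structure homomorphism and verifying the faithful flatness hypothesis on the correct side.

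For the first bullet I would take $R=A(\alpha)$ and $S=A(\beta)$, with the inclusion $\iota_\alpha^\beta\colon A(\alpha)\hookrightarrow A(\beta)$ as the structure homomorphism. Since $\alpha\preccurlyeq\beta$, the first item of Proposition~\ref{prop:A-Faithfullyflat} gives that $A(\beta)$ is a faithfully flat $A(\alpha)$-module; by the two-sided freeness built into Definition~\ref{defn:LocFrobAlg}(b) this holds as a \emph{right} $A(\alpha)$-module, which is exactly the side required to form $A(\beta)\otimes_{A(\alpha)}M$ and land in left $A(\beta)$-modules. Applying Proposition~\ref{prop:FF}(c) then yields the simplicity of $M$ directly.

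For the second bullet the argument is identical, replacing $A(\beta)$ by $A$ and invoking the second item of Proposition~\ref{prop:A-Faithfullyflat}, namely that $A$ is a faithfully flat $A(\alpha)$-module (flatness from Proposition~\ref{prop:set-up}(a), and faithfulness via the splitting of $A(\alpha)\hookrightarrow A$ established in the proof of Proposition~\ref{prop:A-Faithfullyflat}). The structure map is now the inclusion $A(\alpha)\hookrightarrow A$, and Proposition~\ref{prop:FF}(c) again gives simplicity of $M$ from simplicity of $A\otimes_{A(\alpha)}M$.

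I do not anticipate any genuine obstacle: the corollary is a purely formal combination of the two cited results. The only point deserving a moment's attention is the left/right bookkeeping—ensuring the faithful flatness is on the side that makes $A(\beta)\otimes_{A(\alpha)}(-)$ and $A\otimes_{A(\alpha)}(-)$ send left modules to left modules—and this is precisely what the two-sided freeness in the definition of a locally Frobenius algebra guarantees.
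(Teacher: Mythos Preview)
Your proposal is correct and matches the paper's approach exactly: the paper states this corollary as an immediate consequence of combining Proposition~\ref{prop:FF}(c) with Proposition~\ref{prop:A-Faithfullyflat}, without giving any further argument. Your left/right bookkeeping remark is a reasonable clarification but adds nothing the paper found necessary to spell out.
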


Here are some useful consequences of faithful
flatness of~$A$.
\begin{cor}\label{cor:SimpleCoherent}
Let $M$ be a simple coherent $A$-module. Then
there is a~$\lambda\in\Lambda$ for which~$M$
has the form $M\iso A\otimes_{A(\lambda)}M'$
for a simple~$A(\lambda)$-module~$M'$.
\end{cor}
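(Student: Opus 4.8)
The plan is to reduce the statement to the two earlier structural results on induced modules and faithful flatness, so that essentially no new work is required. First I would apply Proposition~\ref{prop:set-up}(c) to the coherent module~$M$: this produces some $\lambda\in\Lambda$ together with a finitely generated (indeed finitely presented) $A(\lambda)$-module~$M'$ and an isomorphism of $A$-modules $M\iso A\otimes_{A(\lambda)}M'$. It is worth emphasising that this step does \emph{not} yet guarantee simplicity of~$M'$; it only exhibits~$M$ as induced up from \emph{some} finitely generated module over a Frobenius subalgebra.

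Having fixed such a presentation, the second step is to recover simplicity of~$M'$ from that of~$M$. Here I would invoke faithful flatness of~$A$ as an $A(\lambda)$-module (Proposition~\ref{prop:A-Faithfullyflat}) in the packaged form of Corollary~\ref{cor:A-reflectssimple}: since $A\otimes_{A(\lambda)}M'\iso M$ is a simple $A$-module, that corollary yields at once that~$M'$ is simple as an $A(\lambda)$-module. For the conclusion to be meaningful one also needs $M'\neq0$, but this is automatic, since $M\neq0$ forces $A\otimes_{A(\lambda)}M'\neq0$, and hence $M'\neq0$ by faithful flatness.

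Thus the argument has no genuinely hard step: the real content is already carried by the descent of the induced-module presentation in Proposition~\ref{prop:set-up}(c) and by the fact, recorded in Corollary~\ref{cor:A-reflectssimple}, that faithful flatness reflects simplicity downward along $A\otimes_{A(\lambda)}(-)$. The only point calling for a moment's care is conceptual rather than technical: one must resist trying to \emph{choose}~$M'$ to be simple while building the presentation, and instead observe that simplicity of the induced module~$M$ forces simplicity of \emph{any} finitely generated~$M'$ that induces it. This is consistent with the finitely presented~$M'$ delivered by Proposition~\ref{prop:set-up}(c), since a simple $A(\lambda)$-module is in any case finite dimensional as $A(\lambda)$ is; the case of right modules is identical.
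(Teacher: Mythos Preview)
Your proposal is correct and follows exactly the same route as the paper: invoke Proposition~\ref{prop:set-up}(c) to write $M\iso A\otimes_{A(\lambda)}M'$ for some finitely generated $A(\lambda)$-module~$M'$, then apply Corollary~\ref{cor:A-reflectssimple} to deduce that~$M'$ is simple. The paper's proof is two sentences long and uses precisely these two ingredients; your additional remarks on nontriviality of~$M'$ and the conceptual caveat are sound but not needed for the argument.
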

\begin{proof}
By Proposition~\ref{prop:set-up} there is a
$\lambda\in\Lambda$ such that~$M$ is induced
up from an $A(\lambda)$-module~$M'$. By
Corollary~\ref{cor:A-reflectssimple},~$M'$
is simple.
\end{proof}
\begin{cor}\label{cor:A-reflectsepimono}
Let $\lambda\in\Lambda$, and let\/
$\phi\:M_\lambda\to N_\lambda$ be a homomorphism
of \fg. left $A(\lambda)$-modules. If\/
$\Id\otimes\phi\:A\otimes_{A(\lambda)}M_\lambda
\to A\otimes_{A(\lambda)}N_\lambda$
is a monomorphism/an epimorphism/an isomorphism,
then so is\/~$\phi$.
\end{cor}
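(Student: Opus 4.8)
The plan is to deduce this directly from the faithful flatness of $A$ over $A(\lambda)$, so that the corollary becomes a formal consequence of results already in hand. By Proposition~\ref{prop:A-Faithfullyflat}, $A$ is a faithfully flat right $A(\lambda)$-module, and hence the functor $A\otimes_{A(\lambda)}(-)$ from left $A(\lambda)$-modules to left $A$-modules is exact and reflects the vanishing of modules. The quickest route is then to invoke Proposition~\ref{prop:FF}(b), which asserts precisely that tensoring with a faithfully flat module reflects monomorphisms, epimorphisms and isomorphisms; applying it with $P=A$ and $R=A(\lambda)$ gives all three assertions at once.

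Should a more self-contained argument be preferred, I would argue separately for each case via kernels and cokernels. For the monomorphism statement, set $K=\ker\phi$, so that $0\to K\to M_\lambda\xrightarrow{\phi} N_\lambda$ is exact; flatness of $A$ then identifies $A\otimes_{A(\lambda)}K$ with $\ker(\Id\otimes\phi)$. If $\Id\otimes\phi$ is a monomorphism this kernel is trivial, and the faithfulness criterion in the second bullet of Proposition~\ref{prop:FF}(a) forces $K=0$, i.e.\ $\phi$ is a monomorphism. The epimorphism statement is dual: with $C=\coker\phi$ one has $A\otimes_{A(\lambda)}C\iso\coker(\Id\otimes\phi)$, so that $\Id\otimes\phi$ epic gives $A\otimes_{A(\lambda)}C=0$, whence $C=0$ and $\phi$ is epic. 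The isomorphism case is obtained by combining the two.

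I do not expect any real obstacle here, since all the substance is already packaged into the faithful flatness of $A$ proved in Proposition~\ref{prop:A-Faithfullyflat} together with the general reflection properties of Proposition~\ref{prop:FF}. I would also remark, for the record, that the finite-generation hypotheses on $M_\lambda$ and $N_\lambda$ are not actually used in either version of the argument; they are retained only because this is the setting in which the corollary will subsequently be applied.
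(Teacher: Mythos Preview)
Your proposal is correct and matches the paper's approach exactly: the paper's proof is simply ``See Proposition~\ref{prop:FF}(b)'', using the faithful flatness of~$A$ over~$A(\lambda)$ from Proposition~\ref{prop:A-Faithfullyflat}. Your additional kernel/cokernel argument and the remark that the finite-generation hypotheses are unnecessary are also correct, though the paper does not spell these out.
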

\begin{proof}
See Proposition~\ref{prop:FF}(b).
\end{proof}

The Frobenius extension condition for each
inclusion $A(\lambda)\subseteq A(\lambda')$
ensures that the induction and coinduction
functors
$\ind_{A(\lambda)}^{A(\lambda')}\:\Mod_{A(\lambda)}\to\Mod_{A(\lambda')}$
and
$\coind_{A(\lambda)}^{A(\lambda')}:\Mod_{A(\lambda)}\to\Mod_{A(\lambda')}$
are naturally isomorphic, where for a left
$A(\lambda)$-module~$M$,
\[
\ind_{A(\lambda)}^{A(\lambda')}M
= A(\lambda')\otimes_{A(\lambda)}M,
\quad
\coind_{A(\lambda)}^{A(\lambda')}M
= \Hom_{A(\lambda)}(A(\lambda'),M).
\]
Here we use the right multiplication of $A(\lambda')$
on itself to define the left $A(\lambda')$-module
structure on $\Hom_{A(\lambda)}(A(\lambda'),M)$.
Of course we can specialise to the case
$\lambda=\lambda_0$ and $A(\lambda_0)=\k$.

For an $A(\lambda)$-module~$M$, there is
injective composition
\[
M \xrightarrow{\iso} \Hom_\k(\k,M)
\to \Hom_\k(A(\lambda),M);
\quad
x\mapsto (a\mapsto ax)
\]
which is an $A(\lambda)$-module homomorphism
with
\[
\Hom_\k(A(\lambda),M)
\iso \coind_{\k}^{A(\lambda)}M
\iso\ind_{\k}^{A(\lambda)}M
\]
being both an injective $A(\lambda)$-module
and a free module $A(\lambda)$-module. Of
course if~$M$ is \fg. then so is
$\ind_{\k}^{A(\lambda)}M$.

\begin{prop}\label{prop:fp-injective}
Let $M$ be a coherent $A$-module. Then
there is an embedding of~$M$ into a \fg.
free $A$-module.
\end{prop}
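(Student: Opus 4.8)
The plan is to reduce the question to the Frobenius algebra $A(\lambda)$ and then induce up along the flat extension $A(\lambda)\hookrightarrow A$. First I would invoke Proposition~\ref{prop:set-up}(c) to produce a $\lambda\in\Lambda$ and a finitely generated $A(\lambda)$-module $M'$ with $M\iso A\otimes_{A(\lambda)}M'$. Since $A(\lambda)$ is a Frobenius $\k$-algebra and $M'$ is finitely generated, Proposition~\ref{prop:Frob-embeddingInj}(a) supplies, for some $n$, a monomorphism of $A(\lambda)$-modules $M'\hookrightarrow A(\lambda)^n$ into a \emph{finitely generated} free module.

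The second step is to apply the functor $A\otimes_{A(\lambda)}(-)$. By Proposition~\ref{prop:set-up}(a), $A$ is flat as a right $A(\lambda)$-module, so this functor is exact and in particular preserves the monomorphism above, yielding a monomorphism of $A$-modules
\[
M\iso A\otimes_{A(\lambda)}M'\hookrightarrow A\otimes_{A(\lambda)}A(\lambda)^n\iso A^n.
\]
As $A^n$ is a finitely generated free $A$-module, this is the desired embedding.

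The argument is essentially immediate once these two inputs are in hand, so there is no serious obstacle; the only point requiring care is that the embedding at the Frobenius level should land in a finitely generated free module rather than merely a free one. This is exactly the second assertion of Proposition~\ref{prop:Frob-embeddingInj}(a), which applies because $M'$ is finitely generated, hence finite dimensional over $\k$. Flatness of $A$ over $A(\lambda)$ then does the rest, the freeness of $A(\lambda)^n$ being used only to identify $A\otimes_{A(\lambda)}A(\lambda)^n$ with $A^n$.
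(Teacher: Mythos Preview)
Your proof is correct and follows essentially the same approach as the paper: reduce to a finitely generated module over some $A(\lambda)$ via Proposition~\ref{prop:set-up}(c), embed that into a finitely generated free $A(\lambda)$-module using Proposition~\ref{prop:Frob-embeddingInj}(a), and then induce up using flatness of~$A$ over~$A(\lambda)$ to preserve injectivity. The paper's argument is identical in structure, only phrased slightly more tersely.
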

\begin{proof}
We know that $M\iso\ind_{A(\lambda)}^AM'$
for some \fg. $A(\lambda)$-module with
$\lambda\in\Lambda$. There is also an
embedding of~$M'$ into a \fg. free
$A(\lambda)$-module~$F'$ say. Inducing
up and using flatness of~$A$ over
$A(\lambda)$, we obtain an injection
\[
M\xrightarrow{\iso}\ind_{A(\lambda)}^AM'
         \to\ind_{A(\lambda)}^AF'=F
\]
where $F$ is a \fg. free $A$-module.
\end{proof}

\begin{lem}\label{lem:Ainj-fp}
If $M$ is a coherent $A$-module, then
for $s>0$, $\Ext_A^s(M,A)=0$. Hence~$A$
is injective in the category of coherent
$A$-modules. More generally, this holds
if~$M$ is a coproduct of coherent
$A$-modules.
\end{lem}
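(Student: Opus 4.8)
The plan is to compute $\Ext_A^s(M,A)$ by a change-of-rings argument, reducing the problem over the infinite-dimensional algebra $A$ to an $\Ext$ computation over a single finite-dimensional Frobenius algebra $A(\lambda)$, and then to exploit the injectivity of $A$ as an $A(\lambda)$-module recorded in Proposition~\ref{prop:set-up}(a). First I would invoke Proposition~\ref{prop:set-up}(c) to write $M\iso A\otimes_{A(\lambda)}M'$ for some finitely generated $A(\lambda)$-module $M'$ and some $\lambda\in\Lambda$. Then I would observe that the induction functor $\ind_{A(\lambda)}^A=A\otimes_{A(\lambda)}(-)$ is left adjoint to the (always exact) restriction functor, hence preserves projectives; it is moreover exact because $A$ is flat over $A(\lambda)$ by Proposition~\ref{prop:set-up}(a). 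Applying $\ind_{A(\lambda)}^A$ to a projective resolution $P_\bullet\to M'$ over $A(\lambda)$ therefore produces a projective resolution $A\otimes_{A(\lambda)}P_\bullet\to M$ over $A$.

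The crux is then the adjunction isomorphism $\Hom_A(A\otimes_{A(\lambda)}P_\bullet,A)\iso\Hom_{A(\lambda)}(P_\bullet,A)$, where on the right $A$ is viewed as an $A(\lambda)$-module by restriction. Passing to cohomology gives $\Ext_A^s(M,A)\iso\Ext_{A(\lambda)}^s(M',A)$ for every $s$. Now Proposition~\ref{prop:set-up}(a) tells us that $A$ is injective as an $A(\lambda)$-module, so $\Ext_{A(\lambda)}^s(M',A)=0$ for $s>0$, and hence $\Ext_A^s(M,A)=0$ for $s>0$, as required.

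For the statement that $A$ is injective in the category of coherent $A$-modules, I would combine the vanishing of $\Ext_A^1(-,A)$ with the fact that coherent modules form an abelian category (Theorem~\ref{thm:fp-AbCat}): given a monomorphism $L\hookrightarrow N$ of coherent modules the cokernel $N/L$ is again coherent, so the long exact $\Ext$ sequence together with $\Ext_A^1(N/L,A)=0$ shows that every homomorphism $L\to A$ extends over $N$. The extension to an arbitrary coproduct $M=\bigoplus_i M_i$ of coherent modules is then immediate from the natural isomorphism $\Ext_A^s\bigl(\bigoplus_i M_i,A\bigr)\iso\prod_i\Ext_A^s(M_i,A)$, which reduces it to the coherent case already treated.

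I expect the only point requiring genuine care to be the change-of-rings step, namely verifying that induction is simultaneously exact and projective-preserving, so that it really does transport a projective resolution of $M'$ to a projective resolution of $M$; both of these properties rest squarely on the flatness and injectivity of $A$ over $A(\lambda)$ supplied by Proposition~\ref{prop:set-up}(a), and everything else is formal.
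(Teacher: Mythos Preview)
Your proposal is correct and follows essentially the same route as the paper: both invoke Proposition~\ref{prop:set-up}(c) to write $M\iso A\otimes_{A(\lambda)}M'$, then use the change-of-rings (Eckmann--Shapiro) isomorphism $\Ext_A^*(A\otimes_{A(\lambda)}M',A)\iso\Ext_{A(\lambda)}^*(M',A)$ together with the $A(\lambda)$-injectivity of~$A$ from Proposition~\ref{prop:set-up}(a). You simply spell out the justification of that isomorphism (induction is exact by flatness and preserves projectives as a left adjoint to exact restriction), whereas the paper states it directly and also offers a second, diagram-chasing argument via faithful flatness; your treatment of the coproduct case and of relative injectivity in $\Mod_A^{\mathrm{coh}}$ matches the paper's. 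One tiny quibble: in your closing paragraph you attribute projective-preservation to the injectivity of~$A$ over~$A(\lambda)$, but in fact it follows purely from induction being left adjoint to the exact restriction functor, as you correctly said earlier.
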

\begin{proof}
By Proposition~\ref{prop:set-up}(c), for
some $\lambda\in\Lambda$ there is an
$A(\lambda)$-module $M'$ such that
$A\otimes_{A(\lambda)}M'\iso M$. Then
\[
\Ext^*_A(M,A)
\iso \Ext^*_A(A\otimes_{A(\lambda)}M',A)
\iso \Ext^*_{A(\lambda)}(M',A).
\]
Now recall that by Proposition~\ref{prop:set-up}(a),
for $\lambda\in\Lambda$, $A$ is an injective
$A(\lambda)$-module, hence for~$s>0$,
$\Ext^s_{A(\lambda)}(M',A)=0$.

An alternative argument uses Proposition~\ref{prop:A-Faithfullyflat}.
A sequence of coherent $A$-modules
\[
0\to U\to V
\]
is induced up from a sequence of $A(\lambda)$-modules
for some $\Lambda\in\Lambda$,
\[
0\to A\otimes_{A(\lambda)}U'\to A\otimes_{A(\lambda)}V'
\]
which is exact if and only if the sequence
\[
0\to U'\to V'
\]
is exact. Now given the solid diagram of $A$-modules
\[
\xymatrix{
& A\otimes_{A(\lambda)}U'\ar@{<->}[d]^\iso & A\otimes_{A(\lambda)}V'\ar@{<->}[d]^\iso \\
0\ar[r] & U\ar[r]\ar[d] & V\ar@{-->}[ld] \\
& A &
}
\]
with exact row, by using the adjunction there
is a solid diagram of $A(\lambda)$-modules
\[
\xymatrix{
0\ar[r] & U'\ar[r]\ar[d] & V'\ar@{-->}[ld] \\
& A &
}
\]
with exact row. Since $A$ is a flat $A(\lambda)$-module,
it is injective and it follows that we can complete this
diagram with a dashed arrow. Again using the adjunction,
we can complete the original diagram, showing that~$A$
is relatively injective.

For a coproduct of coherent modules, it is
standard that $\Ext_A^s(-,N)$ sends coproducts
to products.
\end{proof}

\begin{cor}\label{cor:Ainj-fr}
If $M$ is a \fr. $A$-module, then for $s>0$,
$\Ext_A^s(M,A)=0$.
\end{cor}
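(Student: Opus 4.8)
The plan is to reduce the \fr.\ case to the coherent case already handled in Lemma~\ref{lem:Ainj-fp}, using the structure theorem for \fr.\ modules. First I would invoke Theorem~\ref{thm:Lam}(a) to write $M\iso M_0\oplus F$, where $M_0$ is \fp.\ and $F$ is free. Since $A$ is coherent by Proposition~\ref{prop:set-up}(b), Remark~\ref{rem:Coherence} tells us that \fp.\ and coherent coincide, so $M_0$ is a coherent $A$-module.

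Next I would deal with the free summand. The free module $F$ is a coproduct of copies of $A$, and $A$ itself, regarded as a module over itself, is coherent precisely because $A$ is a coherent ring (Proposition~\ref{prop:set-up}(b)). Consequently $M\iso M_0\oplus F$ exhibits $M$ as a coproduct of coherent $A$-modules: one copy of $M_0$ together with the copies of $A$ making up $F$.

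Finally I would apply the last sentence of Lemma~\ref{lem:Ainj-fp}, which asserts that $\Ext_A^s(-,A)$ vanishes for $s>0$ not only on coherent modules but on arbitrary coproducts of them (this uses that $\Ext_A^s(-,A)$ converts coproducts into products). Since $M$ is such a coproduct, we conclude $\Ext_A^s(M,A)=0$ for all $s>0$.

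The only point requiring care is not a genuine obstacle but a bookkeeping observation: the free part $F$ may be neither finitely generated nor coherent as a whole, so one cannot apply the coherent case of Lemma~\ref{lem:Ainj-fp} directly to $M$. The resolution is exactly the coproduct clause of that lemma, which is why I would emphasise that $F$ is a coproduct of the coherent module $A$ rather than trying to treat $F$ as a single coherent object.
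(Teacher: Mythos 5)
Your proof is correct and follows essentially the same route as the paper: both decompose $M\iso M_0\oplus F$ via Theorem~\ref{thm:Lam}(a) and reduce to Lemma~\ref{lem:Ainj-fp}. The only difference is cosmetic --- the paper disposes of the free summand by using that $\Ext_A^s(F,A)=0$ for $s>0$ since $F$ is projective, whereas you route it through the coproduct-of-coherents clause of Lemma~\ref{lem:Ainj-fp}; both steps are valid.
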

\begin{proof}
By Theorem~\ref{thm:Lam}(a), such a module
has the form $M\iso M_0\oplus F$, with~$M_0$
\fp. and~$F$ free. Then for~$s>0$,
\[
\Ext_A^s(M,A) \iso \Ext_A^s(M_0,A) = 0.
\qedhere
\]
\end{proof}

\begin{prop}\label{prop:fr-embedding}
Let $A$ be a locally Frobenius $\k$-algebra
and $M$ an $A$-module. \\
\emph{(a)} If $M$ is \fp. then there is
an embedding of $M$ into a finite rank
free module. \\
\emph{(b)} If $M$ is \fr. then there is
an embedding of~$M$ into a free module.
\end{prop}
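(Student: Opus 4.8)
The plan is to deduce both parts from results already in hand, using the structure theory of finitely related modules.

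For part~(a), I would first recall that since~$A$ is coherent (Proposition~\ref{prop:set-up}(b)), an \fp. module is precisely a coherent module by Remark~\ref{rem:Coherence}. Consequently the statement of~(a) is literally the content of Proposition~\ref{prop:fp-injective}, which already produces an embedding of any coherent $A$-module into a \fg. free $A$-module. So (a) requires no new argument beyond this identification; all of the real work---inducing up an embedding of a \fg. module over the Frobenius algebra~$A(\lambda)$ into a finite rank free $A(\lambda)$-module (Proposition~\ref{prop:Frob-embeddingInj}(a)) along the flat extension $A(\lambda)\hookrightarrow A$---has been carried out there.

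For part~(b), I would invoke the structure theorem for finitely related modules, Theorem~\ref{thm:Lam}(a), to write $M\iso M_0\oplus F$ with~$M_0$ \fp. and~$F$ free. Applying part~(a) to~$M_0$ gives an embedding $M_0\hookrightarrow G$ into a finite rank free module~$G$. Forming the direct sum of this embedding with the identity map of~$F$ yields an embedding
\[
M\iso M_0\oplus F\hookrightarrow G\oplus F,
\]
and $G\oplus F$ is free, being a direct sum of free modules. This completes the argument.

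The only point that needs care---and it is a matter of formulation rather than a genuine obstacle---is that in~(b) the target free module cannot in general be taken of finite rank, since~$F$ may have infinite rank; this is exactly why~(b) asserts an embedding into a free module whereas~(a) asserts an embedding into a finite rank free module. I expect no further difficulty, as the essential content is already packaged in Proposition~\ref{prop:fp-injective} together with the decomposition supplied by Theorem~\ref{thm:Lam}(a).
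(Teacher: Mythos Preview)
Your proposal is correct and matches the paper's approach: part~(b) is handled identically via Theorem~\ref{thm:Lam}(a), and for part~(a) you observe that it is literally Proposition~\ref{prop:fp-injective}, whereas the paper reproves that proposition's argument (descending to~$A(\lambda)$, embedding via Proposition~\ref{prop:Frob-embeddingInj}(a), and inducing up along the flat extension) rather than citing it. Your citation is the cleaner route, since the two statements are indeed the same once one identifies \fp.\ with coherent over the coherent ring~$A$.
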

\begin{proof}
(a) As in the proof of Lemma~\ref{lem:Ainj-fp},
a finite presentation
\[
A^m\to A^n\to M\to 0
\]
is induced up from an exact sequence of
$A(\lambda)$-modules
\[
A(\lambda)^m\to A(\lambda)^n\to M'\to 0
\]
for some $\lambda\in\Lambda$, where
$A\otimes_{A(\lambda)}M'\iso M$. By
Proposition~\ref{prop:Frob-embeddingInj}(a),
there is a monomorphism $M'\to F'$ where~$F'$
is a \fg. free $A(\lambda)$-module. By
flatness of $A$ over $A(\lambda)$, this
gives a monomorphism
\[
M \xrightarrow{\iso}A\otimes_{A(\lambda)}M'
\to A\otimes_{A(\lambda)}F' = F,
\]
where $F$ is a \fg. free $A$-module.  \\
(b) By Theorem~\ref{thm:Lam}(a), $M\iso F\oplus M_0$
with $F$ free and $M_0$ \fp., so the result
follows using~(a).
\end{proof}

We now have a result which completes the proof
of Theorem~\ref{thm:fp-AbCat}.
\begin{lem}\label{lem:Proj&Inj}
The abelian category $\Mod^{\mathrm{coh}}_A$
has enough projectives and injectives which
are the summands of \fg. free modules.
\end{lem}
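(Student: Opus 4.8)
The plan is to handle projectives and injectives separately, in each case first establishing that there are enough of them and then identifying every such object as a summand of a finitely generated free module; the two essential inputs will be the embedding of Proposition~\ref{prop:fp-injective} and the relative injectivity of $A$ from Lemma~\ref{lem:Ainj-fp}.

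For the projectives I would argue as follows. Given a coherent module $M$, coherence (equivalently finite presentation, as $A$ is coherent) yields an epimorphism $A^n\twoheadrightarrow M$ for some $n$; since $A^n$ is finitely generated free it is coherent and projective in $\Mod_A$, hence projective in the full subcategory $\Mod^{\mathrm{coh}}_A$. This already shows there are enough projectives. To identify an arbitrary projective object $P$, I would take such an epimorphism $A^n\twoheadrightarrow P$ and note that projectivity of $P$ splits it, exhibiting $P$ as a summand of a finitely generated free module.

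For the injectives the key step is to check that every finitely generated free module $A^n$ is injective in $\Mod^{\mathrm{coh}}_A$. For $n=1$ this is precisely Lemma~\ref{lem:Ainj-fp}: the vanishing $\Ext^1_A(N,A)=0$ for coherent $N$ says that a map from a coherent submodule into $A$ extends over any coherent overmodule. The case of general $n$ follows by extending componentwise, or simply because finite direct sums of injective objects are injective in any abelian category. Granting this, I would embed an arbitrary coherent $M$ into a finitely generated free module via Proposition~\ref{prop:fp-injective}, which gives enough injectives; and for an arbitrary injective object $J$ I would use the same embedding $J\hookrightarrow A^n$, whose splitting (forced by injectivity of $J$ applied to $\id_J$) realises $J$ as a summand of $A^n$.

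The computations are all routine once these two cited results are available, so I do not expect a genuine obstacle; the one point needing care is that injectivity is asserted \emph{relative to} $\Mod^{\mathrm{coh}}_A$, which is exactly why the embedding must land in a finitely generated \emph{free} module rather than in an arbitrary injective $A$-module, since the latter need not be coherent. That the injectives and projectives then coincide, both being the summands of finitely generated free modules, is the locally Frobenius shadow of Corollary~\ref{cor:Frob-Inj=Proj}.
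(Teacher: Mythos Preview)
Your proof is correct and follows essentially the same approach as the paper's own argument: both use the obvious epimorphism from a \fg. free module for enough projectives, Lemma~\ref{lem:Ainj-fp} for injectivity of~$A$ (hence of \fg. free modules), and the embedding of a coherent module into a \fg. free module for enough injectives. You are simply more explicit than the paper in spelling out why an arbitrary projective or injective object is a summand of a \fg. free module, and you cite Proposition~\ref{prop:fp-injective} where the paper cites the equivalent Proposition~\ref{prop:fr-embedding}(a).
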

\begin{proof}
The existence of projectives is obvious.
Lemma~\ref{lem:Ainj-fp} implies the existence
of injectives in~$\Mod^{\mathrm{coh}}_A$, and
by Proposition~\ref{prop:fr-embedding}(a),
every coherent module embeds in a \fg. free
module which is also injective.
\end{proof}

\begin{rem}\label{rem:Coherent-Ext}
Of course we should not expect~$A$ to be an
injective in~$\Mod_A$ so it is only relatively
injective with respect to the full
subcategory~$\Mod^{\mathrm{coh}}_A$; the basic
notions of relative homological algebra can be
found in Eilenberg \& Moore~\cite{SE&JCM:RelHomAlg}.
We can use finitely generated free modules to
build projective resolutions in~$\Mod^{\mathrm{coh}}_A$
for computing~$\Ext_A$ since for a coherent
module~$M$,~$\Ext_A^*(-,M)$
are isomorphic to derived functors
on~$\Mod^{\mathrm{coh}}_A$. On the category of all
$A$-modules the left exact functor $\Hom_A(M,-)$
has the right derived functors $\Ext_A^*(M,-)$.
By Lemma~\ref{lem:Ainj-fp}, for each finitely
generated free module~$F$, $\Ext_A^s(M,F) = 0$
if $s>0$. This means that~$F$ is $\Hom_A(M,-)$-acyclic,
and it is well-known that these right derived
functors can be computed using resolutions by
such modules which always exist here; see~\cite{CAW:HomAlg}
on $F$-acyclic objects and dimension shifting.
\end{rem}

\subsection*{Injective, projective and flat modules}
Lemma~\ref{lem:Ainj-fp} shows that~$A$ is
relatively injective with respect to the
category of coherent $A$-modules; it follows
that \fg. free and projective modules are
also relatively injective. We can relate
flatness to relatively injectivity.
\begin{lem}\label{lem:flat->relinj}
Let $P$ be a flat $A$-module. Then $P$ is
relatively injective with respect to the
category of coherent $A$-modules.
\end{lem}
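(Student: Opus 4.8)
The plan is to reduce to a single Frobenius layer $A(\lambda)$ exactly as in the proof of Lemma~\ref{lem:Ainj-fp}, the one new ingredient being that a flat $A$-module becomes injective upon restriction to any $A(\lambda)$. First I would observe that $P$ is flat not merely over $A$ but over each $A(\lambda)$: since $A$ is flat over $A(\lambda)$ by Proposition~\ref{prop:set-up}(a) and $P$ is flat over $A$, transitivity of flatness (the functor $P\otimes_{A(\lambda)}(-)\iso P\otimes_A(A\otimes_{A(\lambda)}(-))$ is a composite of two exact functors) shows that $P$ is a flat $A(\lambda)$-module. As $A(\lambda)$ is a Frobenius algebra, Proposition~\ref{prop:Proj-Flat-Inj} then yields that $P$ is injective as an $A(\lambda)$-module.

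With this in hand, the lifting property defining relative injectivity follows the pattern of the second argument in Lemma~\ref{lem:Ainj-fp}. Given a monomorphism $U\to V$ of coherent $A$-modules together with a map $U\to P$, Proposition~\ref{prop:set-up}(c),(d) lets me choose $\lambda\in\Lambda$ and a homomorphism $\phi'\:U'\to V'$ of \fg. $A(\lambda)$-modules inducing $U\to V$; Corollary~\ref{cor:A-reflectsepimono} then guarantees that $\phi'$ is itself a monomorphism. Using the induction--restriction adjunction $\Hom_A(A\otimes_{A(\lambda)}(-),P)\iso\Hom_{A(\lambda)}(-,P)$, the given map $U\to P$ corresponds to an $A(\lambda)$-module map $U'\to P$. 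Since $P$ is $A(\lambda)$-injective and $\phi'$ is mono, this extends to a map $V'\to P$, and transporting back along the adjunction produces an $A$-module map $V\to P$; naturality of the adjunction in the first variable ensures that this extension restricts to the original map on $U$, as required.

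Equivalently, one may phrase the conclusion as vanishing $\Ext$: writing a coherent $M$ as $A\otimes_{A(\lambda)}M'$ via Proposition~\ref{prop:set-up}(c), a projective resolution of $M'$ over $A(\lambda)$ induces up, by flatness of $A$ over $A(\lambda)$, to a projective resolution of $M$ over $A$; applying $\Hom_A(-,P)$ and using the adjunction identifies $\Ext_A^s(M,P)\iso\Ext_{A(\lambda)}^s(M',P)$, which vanishes for $s>0$ precisely because $P$ is injective over $A(\lambda)$.

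I expect the only genuine point to verify is the transitivity step yielding $A(\lambda)$-injectivity of $P$; everything after that is the formal adjunction machinery already deployed for $A$ itself in Lemma~\ref{lem:Ainj-fp}. Thus the main obstacle is simply recognising that flatness over the whole algebra propagates to flatness over each Frobenius layer, where Proposition~\ref{prop:Proj-Flat-Inj} converts it into injectivity and the earlier set-up results take over.
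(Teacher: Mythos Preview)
Your argument is correct. The transitivity-of-flatness step is sound (for a right $A(\lambda)$-module $N$ one has $N\otimes_{A(\lambda)}P\iso(N\otimes_{A(\lambda)}A)\otimes_AP$, a composite of exact functors), and Proposition~\ref{prop:Proj-Flat-Inj} then converts $A(\lambda)$-flatness of $P$ to $A(\lambda)$-injectivity. After that your adjunction argument is exactly the second proof of Lemma~\ref{lem:Ainj-fp} with $P$ in place of~$A$, and goes through verbatim.

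The paper, however, takes a different route. Rather than restricting $P$ down to a Frobenius layer, it invokes the characterisation of flatness from \cite{TYL:LectModules&Rings}*{theorem~4.32}: any homomorphism from a finitely presented module into a flat module factors through a finitely generated free module. Thus the given map $U\to P$ is factored as $U\to F\to P$ with $F$ \fg. free; since $F$ is already known to be relatively injective by Lemma~\ref{lem:Ainj-fp}, one extends $U\to F$ over $V$ and composes with $F\to P$. Your approach exploits the locally Frobenius structure directly and yields as a by-product the pleasant fact that a flat $A$-module is genuinely injective over every $A(\lambda)$; the paper's approach is more intrinsic to flatness and avoids re-descending the lifting problem to a finite layer, instead reducing immediately to the relative injectivity of free modules already in hand.
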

\begin{proof}
By~\cite{TYL:LectModules&Rings}*{theorem~4.32},
every homomorphism $M\to P$ from a finitely
presented $A$-module factors as
\[
\xymatrix{
M\ar[r]\ar@/^15pt/[rr] & F\ar[r] & P
}
\]
where $F$ is a finitely generated free module.
Then given a diagram of solid arrows with exact
row where~$U,V$ are coherent,
\[
\xymatrix{
0\ar[r] & U\ar[rr]\ar[dd]\ar@{-->}[dr] && V\ar@{.>}[dl]  \\
& & F\ar@{-->}[dl] & \\
 & P &&
}
\]
we can extend it with the dashed arrows using
the factorisation above, and since~$F$ is
relatively injective there is a dotted arrow
making the whole diagram commute.

This can also be proved using the Lazard-Govorov
Theorem~\ref{thm:Lazard-Govorov}.
\end{proof}

Since products of (relative) injectives are
(relative) injectives, this is consistent
with Chase's Theorem~\ref{thm:Flat-Inj}(b)
which says that products of flat modules
over a coherent ring are flat.

\begin{lem}\label{lem:inj->flat}
Let $J$ be a relatively injective $A$-module
with respect to the category of coherent
$A$-modules. Then~$J$ is flat. In particular,
every injective $A$-module is flat.
\end{lem}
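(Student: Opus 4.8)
The plan is to reduce flatness of $J$ over $A$ to flatness over each finite-dimensional Frobenius subalgebra $A(\lambda)$, where flatness, projectivity and injectivity all coincide by Proposition~\ref{prop:Proj-Flat-Inj}. Concretely, I would first show that the restriction $\rest J$ of $J$ to each $A(\lambda)$ is an injective $A(\lambda)$-module, and then globalise by a $\Tor$ computation.

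For the first step, fix $\lambda\in\Lambda$. Since $A(\lambda)$ is Noetherian, Baer's criterion reduces injectivity of the left module $\rest J$ to extending each homomorphism $I_\lambda\to\rest J$ along an inclusion $I_\lambda\hookrightarrow A(\lambda)$ of a (finitely generated) left ideal. Using flatness of $A$ over $A(\lambda)$ on both sides (Proposition~\ref{prop:set-up}(a)), induction produces a monomorphism of coherent $A$-modules $A\otimes_{A(\lambda)}I_\lambda\hookrightarrow A$, and by the standard tensor--hom (induction--restriction) adjunction the given map corresponds to a homomorphism $A\otimes_{A(\lambda)}I_\lambda\to J$. The relative injectivity of $J$ with respect to coherent $A$-modules extends this to a map $A\to J$, and naturality of the adjunction sends that extension back to the required extension $A(\lambda)\to\rest J$. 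Hence $\rest J$ is injective over $A(\lambda)$, and therefore flat by Proposition~\ref{prop:Proj-Flat-Inj}.

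For the second step, it suffices by the ideal criterion for flatness to prove $\Tor_1^A(A/I,J)=0$ for every finitely generated right ideal $I\subseteq A$. A finite generating set of $I$ lies in some $A(\lambda)$; letting $I_\lambda$ be the right ideal of $A(\lambda)$ it generates, flatness of $A$ over $A(\lambda)$ gives $A\otimes_{A(\lambda)}I_\lambda\iso I$ and hence $A/I\iso (A(\lambda)/I_\lambda)\otimes_{A(\lambda)}A$. Flat base change along $A(\lambda)\hookrightarrow A$ then yields
\[
\Tor_1^A(A/I,J)\iso\Tor_1^{A(\lambda)}(A(\lambda)/I_\lambda,\rest J),
\]
which vanishes because $\rest J$ is flat over $A(\lambda)$. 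Thus $J$ is flat. The final assertion is then immediate, since any injective $A$-module is in particular relatively injective with respect to the coherent modules.

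The main obstacle is the first step: I must make the induction--restriction adjunction argument genuinely natural, so that the extension produced over $A$ really restricts to an extension over $A(\lambda)$, and I must keep careful track of the left/right conventions (testing injectivity of the left module $\rest J$ on left ideals, while testing flatness of the left module $J$ on right ideals). Checking that the induced modules $A\otimes_{A(\lambda)}I_\lambda$ are coherent, so that the relative injectivity hypothesis applies, is routine from Proposition~\ref{prop:set-up}.
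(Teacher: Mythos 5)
Your proposal is correct, but it follows a genuinely different route from the paper. The paper's proof is a two-line application of Lam's factorisation criterion for flatness (\cite{TYL:LectModules&Rings}*{theorem~4.32}): by Proposition~\ref{prop:fp-injective} every coherent module $M$ embeds in a finitely generated free module $F$, relative injectivity of $J$ extends any $M\to J$ over that embedding, so every homomorphism from a finitely presented module into $J$ factors through a finitely generated free module, and Lam~4.32 gives flatness directly. You instead localise to the Frobenius subalgebras: Baer's criterion plus the induction--restriction adjunction shows $\rest J$ is injective, hence flat, over each $A(\lambda)$, and then the ideal criterion together with flat base change for $\Tor$ along $A(\lambda)\hookrightarrow A$ globalises. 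Both arguments are sound; the details you flag as needing care (naturality of the adjunction, coherence of $A\otimes_{A(\lambda)}I_\lambda$, and the left/right bookkeeping --- note that for a right ideal the induced module should be written $I_\lambda\otimes_{A(\lambda)}A$) all check out. What the paper's route buys is brevity, at the cost of invoking the less elementary factorisation characterisation of flatness (which it needs anyway for the converse Lemma~\ref{lem:flat->relinj}); what your route buys is the intermediate statement that such a $J$ restricts to an injective module over every $A(\lambda)$, which is of independent interest and meshes well with the local-to-global philosophy running through Section~\ref{sec:LocFrob-CohMods}.
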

\begin{proof}
Let $M$ be a coherent $A$-module. By
Proposition~\ref{prop:fp-injective} there
is a monomorphism~$i\:M\to F$ where~$F$
is a \fg. free module. Now for any homomorphism
$f\:M\to J$, the diagram of solid arrows with
exact row
\[
\xymatrix{
0\ar[r] & M\ar[r]^i\ar[d]_f & F\ar@{-->}[dl] \\
& J &
}
\]
can be extended by a dashed arrow by injectivity.
By
Lam~\cite{TYL:LectModules&Rings}*{theorem~4.32},
$J$ is flat.
\end{proof}

Here is a summary of what we have established.
\begin{prop}\label{prop:PalgebrasProj-Flat-Inj}
Let~$M$ be an $A$-module. Consider the following
conditions:
\begin{itemize}
\item[(A)]
$M$ is relatively injective for the category
of coherent $A$-modules;
\item[(B)]
$M$ is flat;
\item[(C)]
$M$ is projective;
\item[(D)]
$M$ is a product of flat modules.
\end{itemize}
Then we have the implications shown.
\[
\xymatrix{
\text{\rm(A)}\ar@{<=>}[r] & \text{\rm(B)} & \ar@{=>}[l]\text{\rm(C)}  \\
& \text{\rm(D)}\ar@{<=>}[u] &
}
\]
Furthermore, if $M$ is coherent or more generally
\pfr., then $\text{\rm(B)}\Lra\text{\rm(C)}$.
\end{prop}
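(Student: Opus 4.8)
The plan is to read the diagram of implications straight off the lemmas already established, and then to isolate the final addendum, which is the only part carrying content beyond a direct citation. I would organise the proof as: first the equivalence $\text{(A)}\IFF\text{(B)}$, then $\text{(C)}\Rightarrow\text{(B)}$, then $\text{(D)}\IFF\text{(B)}$, and finally the conditional $\text{(B)}\Rightarrow\text{(C)}$.

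For the core equivalences the arguments are short. The equivalence $\text{(A)}\IFF\text{(B)}$ is immediate: Lemma~\ref{lem:flat->relinj} supplies $\text{(B)}\Rightarrow\text{(A)}$ and Lemma~\ref{lem:inj->flat} supplies $\text{(A)}\Rightarrow\text{(B)}$. The implication $\text{(C)}\Rightarrow\text{(B)}$ is the standard fact that a projective module is flat. For $\text{(D)}\IFF\text{(B)}$, the direction $\text{(B)}\Rightarrow\text{(D)}$ is trivial, since a flat module is a product of flat modules taking a single factor; conversely $\text{(D)}\Rightarrow\text{(B)}$ is exactly Chase's characterisation of coherence in Theorem~\ref{thm:Flat-Inj}(b), applied to the coherent ring~$A$ of Proposition~\ref{prop:set-up}(b), which asserts that products of flat modules over a coherent ring are flat.

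The substantive step is the addendum $\text{(B)}\Rightarrow\text{(C)}$ under the hypothesis that~$M$ is \pfr. (which, over the coherent ring~$A$, subsumes the coherent case, since by Remark~\ref{rem:Coherence} a coherent module is \fp. and hence \pfr.). Here I would invoke the stable description in Theorem~\ref{thm:Lam}(c) to write $M\oplus F'\iso M_0\oplus F''$ with~$M_0$ \fp. and $F',F''$ free. If $M$ is flat then so is $M\oplus F'$, hence so is the isomorphic module $M_0\oplus F''$, and therefore so is its direct summand~$M_0$. Being \fp., and so \fr., $M_0$ then satisfies the hypothesis of Theorem~\ref{thm:Lam}(b), which promotes flatness to projectivity; thus $M_0\oplus F''\iso M\oplus F'$ is projective and~$M$, as a direct summand of a projective module, is projective.

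I expect no genuine obstacle here: the only care required is the bookkeeping with the swindle-type isomorphism of Theorem~\ref{thm:Lam}(c), namely checking that flatness passes to direct summands and that projectivity is inherited by direct summands, so that both properties transfer correctly across $M\oplus F'\iso M_0\oplus F''$. Every other implication is a one-line appeal to a result already in place.
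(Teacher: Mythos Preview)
Your proof is correct and follows essentially the same route as the paper: the same two lemmas for (A)$\IFF$(B), Chase's theorem for (D)$\Rightarrow$(B), and Theorem~\ref{thm:Lam} together with the flat-implies-projective result for \fp.\ modules for the addendum. The only cosmetic differences are that the paper also remarks that (D)$\Rightarrow$(B) can be obtained without Chase via ``a product of (relative) injectives is a (relative) injective'', and it cites part~(a) of Theorem~\ref{thm:Lam} where your appeal to part~(c) is arguably the cleaner reference for the \pfr.\ case.
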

\begin{proof}
The equivalence of (A) and (B) follows from
Lemmas~\ref{lem:flat->relinj} and~\ref{lem:inj->flat}.
The implication $\text{(D)}\Lra\text{(B)}$
follows from Chase's
theorem~\cite{TYL:LectModules&Rings}*{theorem~4.47};
in particular it applies to the case of a
product of free modules. Actually we don't
really need to use Chase's Theorem since
a product of (relative) injectives is
a (relative) injective.

The last statement follows by Theorem~\ref{thm:Lam}(a)
and Lam~\cite{TYL:LectModules&Rings}*{theorem~(4.30)}.
\end{proof}

We also mention another result on flat modules
whose proof requires
Corollary~\ref{cor:fd->Coherent} which will
be given later.
\begin{prop}\label{prop:Ext(fin,flat)}
Let $M$ be a \fd. $A$-module and~$P$ a flat
$A$-module. Then
\[
\Hom_A(M,P) = 0.
\]
\end{prop}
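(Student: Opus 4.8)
The plan is to show that any homomorphism $f\:M\to P$ from a finite dimensional module to a flat module must vanish. The key observation is that a non-zero $f$ would have non-trivial image, and this image would be a finite dimensional submodule of the flat module $P$; the strategy is to derive a contradiction from the coherence-theoretic properties established earlier. First I would recall that by Corollary~\ref{cor:fd->Coherent} (to be proved later, but assumed available here), a finite dimensional submodule of a flat $A$-module must be coherent, or some closely related consequence ruling out finite dimensional submodules of flat modules. Since Proposition~\ref{prop:fd-noncoherent} tells us that a non-trivial finite dimensional $A$-module is never coherent, these two facts are in direct tension and should force the image to be zero.

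More concretely, suppose $f\:M\to P$ is non-zero, so $N=\im f\subseteq P$ is a non-trivial finite dimensional $A$-module. The crux is that $N$ sits inside the flat module $P$ as a finitely generated (indeed finite dimensional) submodule. The plan is to use the forthcoming Corollary~\ref{cor:fd->Coherent} to conclude that $N$, being a finite dimensional submodule of a flat module, is coherent. But then $N$ is a non-trivial finite dimensional coherent module, contradicting Proposition~\ref{prop:fd-noncoherent}, which asserts that no such module exists. This contradiction forces $f=0$, completing the argument.

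The main obstacle I anticipate is ensuring that the cited result Corollary~\ref{cor:fd->Coherent} genuinely delivers coherence of finite dimensional submodules of flat modules, rather than merely of $A$ itself or of free modules. The precise statement of that corollary is what bridges flatness and coherence here, and the whole proof hinges on it; everything else is formal once that link is in place. If instead the corollary only guarantees that finite dimensional submodules of \emph{free} (or projective) modules are coherent, I would first reduce to that case: since $P$ is flat, by the Lazard--Govorov Theorem~\ref{thm:Lazard-Govorov} it is a filtered colimit of finitely generated free modules, and the finite dimensional (hence finitely generated) image $N$ factors through one of these free modules, landing the argument back in the coherent setting and yielding the same contradiction with Proposition~\ref{prop:fd-noncoherent}.
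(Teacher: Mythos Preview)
Your primary plan rests on a misreading of Corollary~\ref{cor:fd->Coherent}. That result does not say that finite dimensional submodules of flat (or coherent) modules are coherent; it says that for~$M$ finite dimensional and~$N$ coherent one has $\Hom_A(M,N)=0$. So the route you sketch --- produce a non-trivial finite dimensional coherent module and then contradict Proposition~\ref{prop:fd-noncoherent} --- is not what the corollary hands you. Read correctly, the corollary already kills maps from finite dimensional modules into coherent modules outright; the only remaining work is to bridge from \emph{flat} to \emph{coherent}, and that is exactly where Lazard--Govorov enters.

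Your fallback is then essentially the paper's proof: write $P=\colim_\alpha F_\alpha$ with each~$F_\alpha$ finitely generated free, use that~$M$ is finitely generated, and conclude $\Hom_A(M,P)=\colim_\alpha\Hom_A(M,F_\alpha)=0$ since each~$F_\alpha$ is coherent and Corollary~\ref{cor:fd->Coherent} applies termwise. One remark on your phrase ``the image~$N$ factors through one of these free modules'': lifting the map~$f$ itself through some~$F_\alpha$ would require~$M$ to be finitely presented, which a non-trivial finite dimensional $A$-module never is. The robust variant you are reaching for is that the finite dimensional image $N=f(M)$ lies in the image of some $F_\alpha\to P$; then~$N$ is a quotient of a finitely generated (hence coherent) submodule of~$F_\alpha$, so~$N$ is itself coherent, and \emph{now} Proposition~\ref{prop:fd-noncoherent} does give the contradiction you intended all along.
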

\begin{proof}
By the Lazard-Govorov Theorem~\ref{thm:Lazard-Govorov},
$P$ is a filtered colimit of \fg. free modules~$F_\alpha$,
\[
P = \colim_\alpha F_\alpha.
\]
Since $M$ is \fg. and by Corollary~\ref{cor:fd->Coherent},
\[
\Hom_A(M,P) = \colim_\alpha\Hom_A(M,F_\alpha) = 0.
\qedhere
\]
\end{proof}

\subsection*{Jacobson Radicals}
Recall for $\alpha\preccurlyeq\beta$, $A(\alpha)$
is left/right self-injective and so each inclusion
$A(\alpha)\hookrightarrow A(\beta)$ is split
as a left/right $A(\alpha)$-module homomorphism.
So by Lam~\cite{TYL:NonCommRings}*{proposition~5.6},
\begin{equation}\label{eq:rad-alpha->beta}
A(\alpha)\cap\rad A(\beta)\subseteq \rad A(\alpha).
\end{equation}
Similarly, the inclusion $A(\alpha)\hookrightarrow A$
is split since $A$ is an injective $A(\alpha)$-module
and
\begin{equation}\label{eq:rad-alpha->infinity}
A(\alpha)\cap\rad A\subseteq \rad A(\alpha).
\end{equation}
Since $A(\alpha)$ is Artinian, $\rad A(\alpha)$ is
actually nilpotent and so nil, therefore $\rad A$
is also a nil ideal (in fact the largest one); see
also Lam~\cite{TYL:NonCommRings}*{proposition~4.19}.

If all of the $A(\alpha)$ are semisimple, then for
$z\in\rad A$ there must be some~$\gamma$ such that
$z\in A(\gamma)$, hence
\[
z\in A(\gamma)\cap\rad A \subseteq \rad A(\gamma)=\{0\}.
\]
Therefore $\rad A=\{0\}$ and $A$ is semiprimitive
(or Jacobson semisimple in the terminology of
Lam~\cite{TYL:NonCommRings}). In fact since each
$A(\alpha)$ is von Neumann regular, it easily
follows that~$A$ is too; see
Lam~\cite{TYL:NonCommRings}*{corollary~4.24}.
Since~$A$ is not Noetherian, it cannot be
semisimple by Lam~\cite{TYL:NonCommRings}*{corollary~4.25}.

For primitive ideals we have the following.
If $\ann_A(S)$ is the annihilator of a simple
$A$-module~$S$ which restricts to a simple
$A(\alpha)$-module for some~$\alpha$, then
for any $\beta\succcurlyeq\alpha$,
\[
A(\beta)\cap\ann_A(S) = \ann_{A(\beta)}(S).
\]

\begin{prop}\label{prop:Local}
Suppose that each $A(\alpha)$ is local with
maximal ideal~$A(\alpha)^+$, so that each
homomorphism $A(\alpha)\to A(\beta)$ is
local. Then~$A$ is also local.
\end{prop}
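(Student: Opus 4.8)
The plan is to show that $A$ is local by identifying its set of non-units with the augmentation ideal $A^+=\ker(\epsilon\:A\to\k)$. Since $\epsilon$ is an algebra homomorphism, $A^+$ is a two-sided ideal with $A/A^+\iso\k$, and a ring whose non-units form a two-sided ideal is local; so it suffices to prove that an element of $A$ is a unit precisely when it lies outside $A^+$.

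First I would observe that no element of $A^+$ can be a unit: if $a\in A^+$ admitted $b\in A$ with $ab=1$, then applying $\epsilon$ gives $0=\epsilon(a)\epsilon(b)=\epsilon(1)=1$, a contradiction.

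The main step is the converse. Given $a\notin A^+$, so $\epsilon(a)\neq0$, I would use Lemma~\ref{lem:finitesubset} to find $\alpha\in\Lambda$ with $a\in A(\alpha)$. Because $A(\alpha)^+=\ker(\epsilon|_{A(\alpha)})$ and $\epsilon(a)\neq0$, we have $a\notin A(\alpha)^+$; since $A(\alpha)$ is local with maximal ideal $A(\alpha)^+$, the element $a$ is invertible already in $A(\alpha)$, and its inverse lies in $A(\alpha)\subseteq A$. Hence $a$ is a unit in $A$.

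Combining the two directions, the non-units of $A$ are exactly $A^+$, a proper two-sided ideal, so $A$ is local with maximal ideal $A^+$. I do not anticipate a genuine obstacle here: the only point that needs care is the observation that invertibility is both detected and realised at a single finite stage $A(\alpha)$, so that no colimit or coherence argument enters. The hypothesis that each inclusion $A(\alpha)\to A(\beta)$ is local (equivalently $A(\alpha)^+=A(\alpha)\cap A(\beta)^+$) is automatic from compatibility of the augmentations and is precisely what keeps the maximal ideals aligned throughout the directed system.
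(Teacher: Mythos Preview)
Your argument is correct. Both you and the paper reduce to a finite stage via Lemma~\ref{lem:finitesubset} and use the locality of $A(\alpha)$ there; the core idea is the same. The difference is in which characterisation of locality is invoked: you show directly that the set of non-units of $A$ coincides with the two-sided ideal $A^+$ (by lifting inverses from $A(\alpha)$), whereas the paper shows $A^+=\rad A$ by checking the Jacobson-radical criterion that $1+az$ has a left inverse and $1+za$ a right inverse for every $z\in A^+$ and $a\in A$, again by passing to a common $A(\gamma)$. Your route is marginally more direct; the paper's route fits more naturally with the surrounding discussion of radicals, since it makes explicit that $A^+=\rad A$ rather than merely that $A^+$ is the unique maximal ideal.
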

\begin{proof}
The kernel of the augmentation $\epsilon\:A\to\k$
is the maximal ideal~$A^+\lhd A$ for which
\[
A(\alpha)\cap A^+
\subseteq A(\alpha)^+
=\rad A(\alpha).
\]
Now given $z\in A^+$ and $a\in A$ we can assume
that $z,a\in A(\gamma)$ for some~$\gamma$ and
so $1+az$ has a left inverse in $A(\gamma)\subseteq A$;
a similar argument shows that $1+za$ has a right
unit, therefore~$A^+=\rad A$.
\end{proof}

\subsection*{Annihilators}
We will discuss left annihilators and modules,
but similar considerations apply to right
annihilators and modules.

Let $a\in A$. Then the left $A$-module
homomorphism
\[
A\to Aa;\quad x\mapsto xa
\]
fits into a short exact sequence
\[
0\to \annl_A(a)\to A\to Aa\to 0
\]
where $Aa\subseteq A$ is a \fg. submodule
and so \fp., therefore $\annl_A(a)$ is
also a \fp. module. More generally we have
\begin{prop}\label{prop:ann-fp}
Let $M$ be a coherent $A$-module and~$m\in M$.
Then the left ideal\/ $\ann_A(m)\subseteq A$
is a \fp. submodule, hence it is a coherent
$A$-module.

More generally, if\/ $W\subseteq M$ is a \fd.\/
$\k$-subspace then its annihilator $\ann_A(W)$
is \fg. as a left ideal, hence~$\ann_R(W)$
and $M/\ann_R(W)$ are coherent modules.
\end{prop}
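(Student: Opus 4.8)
The plan is to exhibit each annihilator as the kernel of a homomorphism out of $A$ whose image is a finitely generated submodule of a coherent module, and then to read off finite generation of the kernel from Schanuel's Lemma. The ingredients are: finitely generated submodules of a coherent module are \fp.\ and hence coherent (Definition~\ref{defn:Coherence} and Remark~\ref{rem:Coherence}); $A$ is itself coherent (Proposition~\ref{prop:set-up}(b)), so that every \fg.\ submodule of $A$ is automatically \fp.; and Corollary~\ref{cor:Schanuel}.

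First I would handle a single element $m\in M$. The left $A$-module homomorphism $A\to M,\ x\mapsto xm$ has image the cyclic submodule $Am\subseteq M$ and kernel exactly $\ann_A(m)$, giving a short exact sequence
\[
0\to\ann_A(m)\to A\to Am\to 0.
\]
Since $M$ is coherent, its \fg.\ submodule $Am$ is \fp.. Applying Corollary~\ref{cor:Schanuel} to this sequence, with the \fg.\ module $A$ in the middle and the \fp.\ module $Am$ on the right, shows $\ann_A(m)$ is \fg.. As $A$ is coherent, this \fg.\ submodule of $A$ is then \fp., hence coherent.

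For the general case I would pick a $\k$-basis $w_1,\dots,w_k$ of the \fd.\ subspace $W$ and consider the homomorphism
\[
A\to M^{\oplus k},\quad a\mapsto(aw_1,\dots,aw_k),
\]
whose kernel is $\bigcap_{i}\ann_A(w_i)=\ann_A(W)$ and whose image is the cyclic submodule generated by $(w_1,\dots,w_k)$. By Theorem~\ref{thm:fp-AbCat} the finite direct sum $M^{\oplus k}$ is coherent, so this image is again a \fg.\ submodule of a coherent module and is therefore \fp.. Repeating the Schanuel argument shows $\ann_A(W)$ is \fg.\ as a left ideal, hence \fp.\ and coherent by coherence of $A$; moreover $A/\ann_A(W)$ is isomorphic to the \fp.\ image and so is coherent as well. (I read the printed ``$M/\ann_R(W)$'' as $A/\ann_A(W)$, since $\ann_A(W)$ is a left ideal of $A$ rather than a submodule of $M$.)

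The argument is essentially formal once these two inputs are assembled, so I do not expect a genuine obstacle. The only points requiring care are checking that the relevant images are really cyclic (hence \fg.) and that the target $M^{\oplus k}$ is coherent — which is precisely where the abelian category structure of Theorem~\ref{thm:fp-AbCat} is used — together with the identification of the kernel of $a\mapsto(aw_1,\dots,aw_k)$ with $\ann_A(W)$.
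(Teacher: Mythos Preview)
Your argument is correct. For the single-element case you are simply unwinding the reference the paper cites (Lam's theorem~2.4.58 amounts to exactly the short exact sequence plus Schanuel argument you give), so that part is essentially the same.

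For the finite-dimensional subspace~$W$ your route differs slightly from the paper's. The paper writes $\ann_A(W)=\bigcap_i\ann_A(w_i)$ and invokes the standard fact that a finite intersection of \fg.\ submodules of a coherent module is \fg., applied inside the coherent module~$A$. You instead bundle the generators into a single homomorphism $A\to M^{\oplus k}$ and apply Schanuel's Lemma once to the resulting short exact sequence. The two approaches are closely related --- the ``well-known argument'' behind the intersection fact is itself a Schanuel-type argument applied to $0\to N_1\cap N_2\to N_1\oplus N_2\to N_1+N_2\to0$ --- but your version is marginally more direct since it avoids the inductive step and works in one shot. The paper's version has the small advantage of isolating a reusable lemma (finite intersections of \fg.\ submodules of a coherent module are \fg.), whereas yours needs the coherence of $M^{\oplus k}$, which you correctly source from Theorem~\ref{thm:fp-AbCat}. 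Your reading of the typo $M/\ann_R(W)$ as $A/\ann_A(W)$ is also correct.
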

\begin{proof}
For the first part,
see~\cite{TYL:LectModules&Rings}*{theorem~2.4.58}.

If $n_1,\ldots,n_k$ span the $\k$-vector space~$W$,
then
\[
\ann_R(W)=\ann_R(n_1)\cap\cdots\cap\ann_R(n_k).
\]
By a well-known argument, the intersection
of two \fg. submodules of a coherent module
is \fg. and so coherent. Hence,~$\ann_R(W)$
and $M/\ann_R(W)$ are coherent modules.
\end{proof}


\subsection*{Finite and simple modules}
Although~$A$ will have simple modules, in general
they will not all be coherent.
%
%
Frobenius algebras are Kasch algebras (i.e.,
every left/right simple module is isomorphic
to a left/right ideal). The situation for
a locally Frobenius algebra~$A$ is less
straightforward. For example, the trivial
module~$\k$ can never be isomorphic to a
submodule because of the next result
characterising the simple modules which
occur as minimal left/right ideals of~$A$.

\begin{prop}\label{prop:SimpleMod-coherent}
Let $S$ be a simple $A$-module. Then $S$
is isomorphic to submodule of $A$ if and
only if it is coherent.
\end{prop}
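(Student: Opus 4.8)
The plan is to treat the two implications separately, since one is essentially formal while the other uses the local structure of $A$ together with the Frobenius property at each finite stage.

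First I would dispose of the implication that an embedding forces coherence. Suppose $S$ is isomorphic to a submodule of $A$. Being simple, $S$ is cyclic, hence finitely generated. Since $A$ is coherent by Proposition~\ref{prop:set-up}(b), it is in particular pseudo-coherent as a module over itself, so every finitely generated submodule is finitely presented; by Remark~\ref{rem:Coherence} finitely presented is the same as coherent over the coherent ring~$A$. Thus $S$ is coherent. This direction uses no special feature of locally Frobenius algebras beyond coherence of~$A$.

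For the converse, suppose $S$ is coherent. By Corollary~\ref{cor:SimpleCoherent} there is a $\lambda\in\Lambda$ and a simple $A(\lambda)$-module~$M'$ with $S\iso A\otimes_{A(\lambda)}M'$. The algebra $A(\lambda)$ is a symmetric Frobenius algebra, so by Proposition~\ref{prop:Frob-embeddingInj}(b) the simple module~$M'$ embeds as a submodule of $A(\lambda)$; in particular there is a monomorphism $M'\to A(\lambda)$ of $A(\lambda)$-modules. Applying $A\otimes_{A(\lambda)}(-)$ and using that $A$ is flat over $A(\lambda)$ (Proposition~\ref{prop:set-up}(a)), I obtain a monomorphism
\[
S\iso A\otimes_{A(\lambda)}M' \hookrightarrow A\otimes_{A(\lambda)}A(\lambda)\iso A,
\]
which exhibits $S$ as a submodule of~$A$.

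The main obstacle, such as it is, lies in the converse direction: one must first reduce the abstract coherent simple module to one induced from a finite stage $A(\lambda)$ via Corollary~\ref{cor:SimpleCoherent}, and then ensure that the Frobenius-level embedding $M'\hookrightarrow A(\lambda)$ is preserved after inducing up. The latter is exactly where flatness (indeed faithful flatness, Proposition~\ref{prop:A-Faithfullyflat}) of~$A$ over~$A(\lambda)$ enters, guaranteeing that the induced map stays injective. No further estimates or case analysis appear necessary.
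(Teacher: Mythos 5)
Your proof is correct, and the converse direction takes a genuinely different route from the paper's. For the forward implication both arguments amount to the same thing: a simple submodule of $A$ is cyclic, hence a finitely generated submodule of the coherent module $A$, hence finitely presented and so coherent (the paper phrases this via the short exact sequence $0\to\annl_A(s)\to A\to S\to 0$ with $\annl_A(s)$ finitely presented by Proposition~\ref{prop:ann-fp}, but the content is identical). For the converse the paper invokes Proposition~\ref{prop:fp-injective} to embed the coherent simple module $S$ into a finitely generated free module and then observes that some coordinate projection is non-zero, hence a monomorphism by simplicity. You instead descend to a finite stage via Corollary~\ref{cor:SimpleCoherent}, writing $S\iso A\otimes_{A(\lambda)}M'$ with $M'$ simple over the Frobenius algebra $A(\lambda)$, use the Kasch property of Frobenius algebras (Proposition~\ref{prop:Frob-embeddingInj}(b)) to embed $M'$ into $A(\lambda)$ itself, and induce up; ordinary flatness of $A$ over $A(\lambda)$ already preserves the monomorphism, so your parenthetical appeal to faithful flatness is not actually needed. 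Your route is slightly longer but more explicit: it exhibits the embedding as induced from a finite stage and avoids the projection step by exploiting the stronger fact that simples over a Frobenius algebra embed into the algebra rather than merely into a free module; the paper's route is more economical because Proposition~\ref{prop:fp-injective} is already available for all coherent modules. Both are valid and non-circular.
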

\begin{proof}
Assume that $S\subseteq A$ and $0\neq s\in S$.
Since $As=S$, Proposition~\ref{prop:ann-fp}
implies that there is a short exact
sequence
\[
0\to \annl_A(s)\to A\to S\to 0
\]
with $\annl_A(s)$ \fp., so $S$ is
coherent.

For the converse, suppose that $S$
is a coherent simple $A$-module. By
Proposition~\ref{prop:fp-injective},
there is an embedding $j\:S\hookrightarrow F$
into a \fg. free module. At least one of the
compositions of~$j$ with the projection onto
a copy of~$A$ must be non-zero and so a
monomorphism by simplicity, hence~$S$ is
isomorphic to a submodule of~$A$.
\end{proof}

We already know that \fd. simple modules are
not coherent by Corollary~\ref{cor:fd-simple-noncoherent},
so combining this with
Proposition~\ref{prop:SimpleMod-coherent}
we obtain an important consequence.
\begin{cor}\label{cor:fdSimple-notinA}
A \fd. simple module $S$ is not isomorphic to
a submodule of~$A$, or equivalently~$\Hom_A(S,A)=0$.
More generally, for a coherent $A$-module~$N$,
$\Hom_A(S,N)=0$.

In particular this applies to the trivial
module\/~$\k$.
\end{cor}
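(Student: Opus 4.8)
The plan is to derive the whole statement from two facts already in hand: a finite dimensional simple module is not coherent (Corollary~\ref{cor:fd-simple-noncoherent}), and every finitely generated submodule of a coherent module is itself coherent (this is just pseudo-coherence together with Remark~\ref{rem:Coherence}).

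I would begin with the most general assertion, that $\Hom_A(S,N)=0$ for every coherent $A$-module~$N$, since the claim about~$A$ itself is then the special case $N=A$ (recall $A$ is coherent as a module over itself by Proposition~\ref{prop:set-up}(b)). Arguing by contradiction, suppose $\phi\colon S\to N$ is nonzero. Because $S$ is simple its only submodules are~$0$ and~$S$, so $\ker\phi$ must be~$0$ and $\phi$ is injective; thus $\phi$ identifies~$S$ with a submodule of~$N$. Since $S$ is finite dimensional over~$\k$ it is finitely generated as an $A$-module, whence $\phi(S)$ is a finitely generated submodule of the coherent (hence pseudo-coherent) module~$N$. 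By pseudo-coherence $\phi(S)$ is \fp., and over the coherent ring~$A$ this means coherent (Remark~\ref{rem:Coherence}). Then $S\iso\phi(S)$ would be coherent, contradicting Corollary~\ref{cor:fd-simple-noncoherent}.

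To recover the first clause I would specialise to $N=A$: the above gives $\Hom_A(S,A)=0$, and since any nonzero map out of a simple module is injective, this is equivalent to saying~$S$ is not isomorphic to any submodule of~$A$. Alternatively, this clause follows at once by combining Proposition~\ref{prop:SimpleMod-coherent} with Corollary~\ref{cor:fd-simple-noncoherent}. The final ``in particular'' is immediate, as the trivial module~$\k$ is one dimensional and simple, hence a finite dimensional simple module.

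I do not expect any genuine obstacle here; the argument is short and structural. The only point deserving a moment's attention is verifying that the image $\phi(S)$ is finitely generated so that pseudo-coherence applies---this is guaranteed by the finite dimensionality of~$S$---and keeping straight that the coherence of~$N$ is used only through the weaker property of pseudo-coherence.
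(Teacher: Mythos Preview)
Your argument is correct. Your route to the general statement differs from the paper's, however. The paper first deduces $\Hom_A(S,A)=0$ exactly as you note (combining Proposition~\ref{prop:SimpleMod-coherent} with Corollary~\ref{cor:fd-simple-noncoherent}), and then for an arbitrary coherent~$N$ it invokes Proposition~\ref{prop:fp-injective} to embed~$N$ into a \fg. free module~$F$; the induced injection $\Hom_A(S,N)\hookrightarrow\Hom_A(S,F)\iso\Hom_A(S,A)^{\rank F}=0$ finishes the job. Your argument instead stays inside~$N$ and uses its pseudo-coherence directly: a nonzero~$\phi$ would make $\phi(S)\subseteq N$ a \fg. submodule, hence coherent, forcing~$S$ coherent. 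This is arguably more elementary---it avoids the embedding result and any exact-sequence bookkeeping---while the paper's approach illustrates the recurrent technique of pushing questions about coherent modules into free ones, which is used repeatedly elsewhere (e.g., in Corollary~\ref{cor:fd->Coherent} and the flatness discussion).
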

\begin{proof}
For the second statement, recall that there
is an embedding~$N\hookrightarrow F$ into
a finitely generated free module. The short
exact sequence
\[
0\to N\to F\to F/N\to0
\]
induces a long exact sequence beginning
with
\[
\xymatrix@C=0.8cm@R=0.2cm{
0\ar[r]&\Hom_A(S,N)\ar[r]&\Hom_A(S,F)\ar[r]\ar@{=}[d]
&\Hom_A(S,F/N)\ar[r]&\cdots \\
&&0&&
}
\]
so $\Hom_A(S,M)=0$.
\end{proof}

We can extend this result to arbitrary \fd.
modules.
\begin{cor}\label{cor:fd->Coherent}
Suppose that~$M$ is a \fd. $A$-module. Then
for any coherent module~$N$,
\[
\Hom_A(M,N) = 0.
\]
\end{cor}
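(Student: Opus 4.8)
The plan is to reduce the general finite dimensional case to the simple case already handled in Corollary~\ref{cor:fdSimple-notinA}, using a composition series for~$M$ together with the long exact sequence for~$\Hom_A(-,N)$. First I would invoke the fact that a finite dimensional $A$-module~$M$ has finite length, so it admits a finite filtration
\[
0 = M_0 \subset M_1 \subset \cdots \subset M_r = M
\]
whose successive quotients $M_i/M_{i-1}$ are simple $A$-modules. Each such simple subquotient is finite dimensional, being a subquotient of the finite dimensional space~$M$.

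Next I would proceed by induction on the length~$r$. The base case $r=1$ is exactly Corollary~\ref{cor:fdSimple-notinA}, which gives $\Hom_A(S,N)=0$ for every finite dimensional simple~$S$ and every coherent~$N$. For the inductive step, I would apply the left exact contravariant functor $\Hom_A(-,N)$ to the short exact sequence
\[
0\to M_{r-1}\to M\to M/M_{r-1}\to 0,
\]
obtaining an exact sequence
\[
0\to\Hom_A(M/M_{r-1},N)\to\Hom_A(M,N)\to\Hom_A(M_{r-1},N).
\]
The quotient $M/M_{r-1}$ is simple and finite dimensional, so its Hom-group vanishes by the base case, while $\Hom_A(M_{r-1},N)=0$ by the inductive hypothesis (since $M_{r-1}$ is finite dimensional of smaller length). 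Exactness then forces $\Hom_A(M,N)=0$.

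I do not anticipate a serious obstacle here; the argument is a routine dévissage. The only points requiring mild care are first checking that a finite dimensional module genuinely has a composition series with simple subquotients (immediate from finite $\k$-dimension, since a strictly decreasing chain of submodules must terminate), and second verifying that the subquotients remain finite dimensional so that Corollary~\ref{cor:fdSimple-notinA} applies to each of them. Both are clear. An essentially equivalent formulation, which I might prefer for brevity, is to induct directly on $\dim_\k M$: pick any simple submodule $S\subseteq M$, use the exact sequence $0\to S\to M\to M/S\to 0$, and apply the base case to~$S$ together with the inductive hypothesis to $M/S$.
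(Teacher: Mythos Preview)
Your proposal is correct and matches the paper's approach: the paper's proof is a one-sentence sketch saying to induct on the length of a composition series of~$M$, with the inductive step reducing to the vanishing of $\Hom_A(S,N)$ for a \fd. simple~$S$. The only cosmetic difference is that you cite Corollary~\ref{cor:fdSimple-notinA} for the base case, whereas the paper cites its ingredients (Propositions~\ref{prop:SimpleMod-coherent} and~\ref{prop:fp-injective}) directly; your citation is in fact the cleaner choice since that corollary already states $\Hom_A(S,N)=0$ for coherent~$N$.
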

\begin{proof}
This can be proved by induction on the length
of a composition series of~$M$. The inductive
step uses the fact that for a \fd. simple
module~$S$, we have~$\Hom_A(S,N)=0$ by
Propositions~\ref{prop:SimpleMod-coherent}
and~\ref{prop:fp-injective}.
\end{proof}


We also have the following.
\begin{prop}\label{prop:CoherentMinIdeal}
Suppose that $L\subseteq A$ is a minimal
left ideal and~$0\neq s\in L$. Then at
least one of\/ $L\subseteq A^+$ or\/
$\annl_A(s)\subseteq A^+$ must be true.
Furthermore, if $\annl_A(s)\subseteq A^+$
then\/~$\dim_\k L=\infty$.

Similar results are true for minimal right
ideals.
\end{prop}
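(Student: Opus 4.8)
The plan is to use simplicity of $L$ to reduce the first assertion to a dichotomy for the submodule $A^+s$, and then to read the dimension statement off from coherence.

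First I would record the basic structure. Since $s\neq0$ and $L$ is minimal, the nonzero submodule $As\subseteq L$ equals $L$, so $L=As\iso A/\annl_A(s)$ is simple and $\annl_A(s)$ is a maximal left ideal; moreover $A^+=\ker(\epsilon\:A\to\k)$ has codimension one, hence is also a maximal left ideal, and $\epsilon$ is multiplicative. The key object is then $A^+s$, a submodule of the simple module $L$, so either $A^+s=L$ or $A^+s=0$. If $A^+s=L$, then $\epsilon(as)=\epsilon(a)\epsilon(s)=0$ for all $a\in A^+$ shows $A^+s\subseteq A^+$, whence $L\subseteq A^+$. If $A^+s=0$, then $A^+\subseteq\annl_A(s)$, and since $A^+$ is maximal and $\annl_A(s)\neq A$ this forces $\annl_A(s)=A^+$, so $\annl_A(s)\subseteq A^+$. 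This proves the stated alternative.

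For the ``furthermore'', I would observe that $L=As$ is a nonzero \fg. submodule of the coherent ring $A$ (Proposition~\ref{prop:set-up}(b)), hence \fp., \ie coherent, so $\dim_\k L=\infty$ by Proposition~\ref{prop:cohmod-infdim}. I would then highlight the resulting tension: under the hypothesis $\annl_A(s)\subseteq A^+$, maximality forces $\annl_A(s)=A^+$ and hence $L\iso A/A^+\iso\k$ is one-dimensional, contradicting $\dim_\k L=\infty$ (equivalently contradicting Corollary~\ref{cor:fdSimple-notinA}, that $\k$ is not isomorphic to a submodule of $A$). So this alternative in fact never occurs, and every minimal left ideal lies in $A^+$. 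The minimal right ideal statements follow by the symmetric argument, replacing $As$ and $\annl_A(s)$ by $sA$ and $\annr_A(s)$ and left modules by right modules.

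The individual steps are routine; the point requiring care is the logical role of the second clause. Because $L$ is coherent, $\dim_\k L=\infty$ already holds unconditionally, so the real content of the ``furthermore'' is that its hypothesis is self-defeating, simultaneously forcing $L$ to be one-dimensional. It is this incompatibility, rather than any computation, that pins the minimal ideals inside $A^+$.
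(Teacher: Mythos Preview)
Your proof is correct and establishes the proposition as stated. The paper's argument for the dichotomy is slightly different in emphasis: it invokes the completely prime property of $A^+$ directly, noting that for each $a\in\annl_A(s)$ one has $as=0\in A^+$, whence $a\in A^+$ or $s\in A^+$; if $s\in A^+$ then $L=As\subseteq A^+$ since $A^+$ is an ideal, and otherwise $\annl_A(s)\subseteq A^+$. Your route via the submodule $A^+s$ of the simple module $L$ is equally short and reaches the same alternative without naming the completely prime condition explicitly. For the infinitude of $\dim_\k L$ both you and the paper appeal to coherence of $L=As$ and Proposition~\ref{prop:cohmod-infdim}.

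Your closing observation goes beyond the paper: since $\annl_A(s)$ and $A^+$ are both maximal left ideals, the hypothesis $\annl_A(s)\subseteq A^+$ forces $\annl_A(s)=A^+$ and hence $L\iso A/A^+\iso\k$, contradicting $\dim_\k L=\infty$ (equivalently Corollary~\ref{cor:fdSimple-notinA}). The paper does not remark that this alternative is vacuous. Your deduction that every minimal left ideal in fact lies in $A^+$ is correct and worth recording; it shows that the ``furthermore'' clause, while literally true as an implication, has empty hypothesis.
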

\begin{proof}
By Proposition~\ref{prop:SimpleMod-coherent},
$\annl_A(s)$ is \fg., and since $\annl_A(s)s=0$
we have $s\in A^+$ or $\ann_A(s)\subseteq A^+$
because~$A^+$ is completely prime. By
Proposition~\ref{prop:cohmod-infdim}, if~$\ann_A(s)\subseteq A^+$
then~$L\iso A/\ann_A(s)$ is infinite dimensional.
\end{proof}

\begin{prop}\label{prop:Simplerestricted}
Suppose that~$S$ is a finite dimensional
simple $A$-module. Then there is a
$\lambda\in\Lambda$ for which the restriction
of $S$ to $A(\lambda)$ is simple. Hence for
$\lambda'\in\Lambda$ with $\lambda\preccurlyeq\lambda'$,
the restriction of $S$ to $A(\lambda')$
is also simple.
\end{prop}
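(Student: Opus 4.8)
The plan is to identify the image of $A$ in $\End_\k(S)$ via the Jacobson density theorem as a finite dimensional simple algebra, and then to exploit that $A$ is a directed union of the $A(\lambda)$ in order to realise this whole image already at a single level $\lambda$.

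First I would set $D=\End_A(S)$, which by Schur's Lemma (as recalled just before Proposition~\ref{prop:Frob-embeddingInj}) is a division algebra finite dimensional over~$\k$, and regard $S$ as a finite dimensional $D$-vector space, say $n=\dim_D S$. Writing $\rho\:A\to\End_\k(S)$ for the structure map, the Jacobson density theorem together with $\dim_\k S<\infty$ gives that $\rho$ is surjective onto $E:=\End_D(S)$, the full ring of $D$-linear endomorphisms of~$S$. Here $E$ is finite dimensional over~$\k$, with $E\iso M_n(D)$, and $S$ is its natural module, hence a simple $E$-module.

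Next, since $A=\bigcup_{\lambda\in\Lambda}A(\lambda)$ and $\rho(A)=E$ is finite dimensional, I would choose $a_1,\dots,a_N\in A$ whose images $\rho(a_1),\dots,\rho(a_N)$ form a $\k$-basis of~$E$; by Lemma~\ref{lem:finitesubset} there is a single $\lambda\in\Lambda$ with $a_1,\dots,a_N\in A(\lambda)$, and then $\rho(A(\lambda))$ contains the $\k$-linear span of the $\rho(a_i)$, namely all of~$E$, so $\rho(A(\lambda))=E$. For this~$\lambda$ the $A(\lambda)$-action on~$S$ factors through the surjection $A(\lambda)\twoheadrightarrow E$, whence a $\k$-subspace of~$S$ is $A(\lambda)$-stable if and only if it is $E$-stable; since $S$ is $E$-simple it is therefore simple as an $A(\lambda)$-module. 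The final clause is then immediate: for $\lambda\preccurlyeq\lambda'$ one has $A(\lambda)\subseteq A(\lambda')$, so every $A(\lambda')$-submodule is in particular an $A(\lambda)$-submodule, and simplicity over~$A(\lambda)$ forces simplicity over~$A(\lambda')$.

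I expect the main obstacle to be precisely the descent to a single~$\lambda$. The naive route—fixing $0\neq s\in S$, using $As=S$ and $\dim_\k S<\infty$ to obtain $A(\lambda)s=S$ for large~$\lambda$—only shows that each individual element generates eventually, and over an infinite field there are infinitely many lines in~$S$ to control simultaneously; similarly, tracking the set of dimensions of $A(\lambda)$-submodules merely restates the claim rather than establishing it, since the witnessing submodules may vary with~$\lambda$. Density circumvents this by replacing the pointwise statement "every element generates'' with the intrinsically finite one that the subalgebra $\rho(A(\lambda))$ already fills up $\End_D(S)$, which is then forced to occur at a finite level by the filtering property of~$\Lambda$.
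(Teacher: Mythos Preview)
Your argument is correct and is essentially the paper's own proof, phrased in the language of Jacobson density rather than of the quotient $A/\ann_A(S)$: both identify the image of~$A$ in $\End_\k(S)$ as a finite dimensional simple algebra, lift a $\k$-basis of that image back to~$A$, and use Lemma~\ref{lem:finitesubset} to capture these lifts in a single~$A(\lambda)$, whence $A(\lambda)$ already surjects onto the simple image and~$S$ is $A(\lambda)$-simple. The only cosmetic difference is that the paper argues via a dimension comparison $\dim_\k A(\lambda)/J = \dim_\k A/\ann_A(S)$ where you note directly that $\rho(A(\lambda))\supseteq\mathrm{span}_\k\{\rho(a_i)\}=E$.
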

\begin{proof}
I would like to thank Ken Brown for the following
proof.

The quotient algebra $A/\ann_A(S)$ is a finite
dimensional simple $\k$-algebra with dimension
$\dim_\k A/\ann_A(S)=m$ say. Lift a $\k$-basis
of $A/\ann_A(S)$ to a linearly independent set
$E=\{e_1,\ldots,e_m\}\subseteq A$ which is
contained in some $A(\lambda)$. We also have
the ideal
\[
J=A(\lambda)\cap\ann_A(S)\lhd A(\lambda)
\]
and the composition
\[
A(\lambda)\hookrightarrow A\to A/\ann_A(S)
\]
factors through an injective homomorphism
$A(\lambda)/J\to A/\ann_A(S)$. Since the
images of the elements of~$E$ in
$A(\lambda)/J$ are still linearly independent,
we have $\dim_\k(A(\lambda)/J)\geq m$; but
as $\dim_\k A(\lambda)/J\leq m$ we obtain
$\dim_\k A(\lambda)/J=\dim_\k A/\ann_A(S)$.
Therefore we have $A(\lambda)/J\iso A/\ann_A(S)$,
making a $A(\lambda)/J$ simple Artinian ring.
But we know that~$S$ is the (unique) simple
$A/\ann_A(S)$-module so it is also simple
as an $A(\lambda)/J$-module and as an
$A(\lambda)$-module.

The other statement is clear.
\end{proof}

%

\begin{cor}\label{cor:fdsimpnoncoh}
Suppose that $S$ is a \fd. non-coherent
simple $A$-module. Then there is an
$\alpha\in\Lambda$ such that
\begin{itemize}
\item
$S$ is a simple $A(\alpha)$-module;
\item
there is an embedding of $A(\alpha)$-modules
$S\hookrightarrow A(\alpha)$;
\item
there is a $\beta\succcurlyeq\alpha$ such
that the composition
$S\hookrightarrow A(\alpha)\hookrightarrow A(\beta)$
is not a homomorphism of $A(\beta)$-modules.
Therefore there is an $a\in A(\beta)$ such
that $aS\nsubseteq S$ and in particular
$aS\setminus S\neq\varnothing$.
\end{itemize}
\end{cor}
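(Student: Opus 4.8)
The plan is to dispatch the first two bullets as direct applications of earlier results and to concentrate effort on the third. First I would apply Proposition~\ref{prop:Simplerestricted} to obtain an $\alpha\in\Lambda$ for which the restriction of $S$ to $A(\alpha)$ is simple, which is the first bullet; by the final assertion of that proposition the restriction remains simple over every $A(\beta)$ with $\beta\succcurlyeq\alpha$. Since $A(\alpha)$ is a symmetric Frobenius algebra it is Kasch, so Proposition~\ref{prop:Frob-embeddingInj}(b) yields an embedding $j\:S\hookrightarrow A(\alpha)$ of $A(\alpha)$-modules, giving the second bullet. I would then fix the image $V=j(S)\subseteq A(\alpha)$, a finite dimensional $A(\alpha)$-submodule with $j\:S\xrightarrow{\iso}V$ an isomorphism of $A(\alpha)$-modules.

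The substance is the third bullet. Here I would identify $S$ with $V$ and carefully distinguish two a priori different left actions on $V$: the abstract $A$-action carried over from $S$ along $j$, and ordinary left multiplication inside the ring $A$. As $j$ is $A(\alpha)$-linear these two actions coincide on $A(\alpha)$, so $V$ is a simple $A(\alpha)$-submodule of $A(\alpha)$ for the multiplication action. The key step is a proof by contradiction: suppose that $A(\beta)V\subseteq V$ held for every $\beta\succcurlyeq\alpha$. Since $A=\bigcup_{\beta}A(\beta)$, this would make $V$ a genuine left $A$-submodule of $A$; being simple as an $A(\alpha)$-module it is then simple as an $A$-module, so $V$ would be a finite dimensional simple $A$-submodule of $A$. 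By Proposition~\ref{prop:SimpleMod-coherent} any simple submodule of $A$ is coherent, contradicting Proposition~\ref{prop:fd-noncoherent} (equivalently, the infinite dimensionality of coherent modules from Proposition~\ref{prop:cohmod-infdim}).

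This contradiction forces the existence of a $\beta\succcurlyeq\alpha$ and an $a\in A(\beta)$ with $aV\nsubseteq V$, that is $aS\nsubseteq S$ and $aS\setminus S\neq\varnothing$. It remains to read off the failure of $A(\beta)$-linearity of the composite $S\hookrightarrow A(\alpha)\hookrightarrow A(\beta)$: were it an $A(\beta)$-module map, then $a\.j(s)=j(a\.s)\in V$ for all $a\in A(\beta)$ and $s\in S$, whence $A(\beta)V\subseteq V$, contrary to the choice of $a$. The hard part is this third bullet, and within it the delicate point is keeping the transported action and the multiplication action apart while pushing the hypothesis $A(\beta)V\subseteq V$ through the union $A=\bigcup_\beta A(\beta)$; once $V$ is exhibited as a finite dimensional $A$-submodule of $A$ the clash with coherence is immediate. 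I note finally that the non-coherence hypothesis on $S$ is in fact automatic, a finite dimensional simple module never being coherent by Corollary~\ref{cor:fd-simple-noncoherent}.
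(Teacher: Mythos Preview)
Your argument is correct and follows essentially the same route as the paper: Proposition~\ref{prop:Simplerestricted} for the first bullet, the Kasch property of Frobenius algebras for the second, and for the third a contradiction with Proposition~\ref{prop:SimpleMod-coherent} obtained by assuming the embedding extends compatibly to all $A(\beta)$. Your treatment of the third bullet is in fact slightly more careful than the paper's sketch: by first showing $A(\beta)V\nsubseteq V$ for some~$\beta$ and then deducing the failure of $A(\beta)$-linearity, you establish both conclusions for the \emph{same}~$\beta$, whereas the paper derives the non-linearity and then passes to $aS\nsubseteq S$ with a ``Therefore'' that your ordering makes rigorous.
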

\begin{proof}
The first statement follows from
Proposition~\ref{prop:Simplerestricted}
and the second is a consequence of a Frobenius
algebras being a Kasch algebra. If for every
$\beta\succcurlyeq\alpha$ this composition
were a $A(\beta)$-module homomorphism then
we would obtain an $A$-module embedding
$S\hookrightarrow A$, contradicting
Proposition~\ref{prop:SimpleMod-coherent}.
\end{proof}

Of course there may be infinite dimensional
minimal left/right ideals which satisfy one
of the conditions~$L^2=0$ or~$L^2=L$. If~$L=Az$
and $L^2=0$ then~$z\in A^+$ since this ideal
is completely prime.

The next results were suggested by analogous
results of
Richardson~\cite{JSR:GpRngsnonzerosocle}.
\begin{lem}\label{lem:MinIdeal}
Let $L$ be a minimal left/right ideal in~$A$.
Then there is a $\lambda\in\Lambda$ for which
$L(\lambda)=A(\lambda)\cap L$ is a non-trivial
minimal left/right ideal in~$A(\lambda)$ and
\[
L = AL(\lambda)\iso A\otimes_{A(\lambda)}L(\lambda).
\]
\end{lem}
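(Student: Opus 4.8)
The plan is to produce the required $\lambda$ directly from a single generator of $L$ and then to descend simplicity across the faithfully flat inclusion $A(\lambda)\hookrightarrow A$. I treat the case of a minimal left ideal; the right-handed case is identical.

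First, since $L$ is a minimal left ideal it is a simple cyclic module, so $L=Az$ for any fixed nonzero $z\in L$. By Lemma~\ref{lem:finitesubset} there is a $\mu\in\Lambda$ with $z\in A(\mu)$, and I fix any $\lambda\succcurlyeq\mu$. Setting $L(\lambda)=A(\lambda)\cap L$, this is a left ideal of $A(\lambda)$ (closure under left multiplication by $A(\lambda)$ is immediate, since $A(\lambda)$ is a subalgebra and $L$ is a left ideal of $A$), and it is non-trivial because $z\in A(\lambda)\cap L=L(\lambda)$.

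Next I would verify that $AL(\lambda)=L$. The containment $AL(\lambda)\subseteq AL=L$ holds because $L$ is a left ideal, while $AL(\lambda)\supseteq Az=L$ because $z\in L(\lambda)$; hence equality. To upgrade this to the stated isomorphism I would argue exactly as in Lemma~\ref{lem:A/ALfindim}: since $A$ is flat as a right $A(\lambda)$-module by Proposition~\ref{prop:set-up}(a), tensoring the inclusion $L(\lambda)\hookrightarrow A(\lambda)$ with $A$ and applying the standard flatness criterion of~\cite{TYL:LectModules&Rings}*{(4.12)} shows that multiplication induces an isomorphism $A\otimes_{A(\lambda)}L(\lambda)\xrightarrow{\iso}AL(\lambda)=L$.

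Finally, the minimality of $L(\lambda)$ in $A(\lambda)$ comes for free from this isomorphism. Since $L$ is a simple $A$-module and $A\otimes_{A(\lambda)}L(\lambda)\iso L$, Corollary~\ref{cor:A-reflectssimple} (which rests on the faithful flatness of Proposition~\ref{prop:A-Faithfullyflat}) forces the non-trivial module $L(\lambda)$ to be simple, that is, a minimal left ideal of $A(\lambda)$. This establishes all three assertions for every $\lambda\succcurlyeq\mu$, and in particular the single $\lambda$ demanded by the statement. Notably this route avoids Corollary~\ref{cor:SimpleCoherent} entirely. The only point requiring care is the descent step: one must package the equality $AL(\lambda)=L$ as the honest tensor-product isomorphism $A\otimes_{A(\lambda)}L(\lambda)\iso L$, so that Corollary~\ref{cor:A-reflectssimple} genuinely applies, rather than merely equating two submodules of~$A$.
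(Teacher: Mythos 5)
Your proof is correct and follows essentially the same route as the paper: write $L=Az$, choose $\lambda$ with $z\in A(\lambda)$, verify $AL(\lambda)=L$, use the flatness criterion of \cite{TYL:LectModules&Rings}*{(4.12)} to get $A\otimes_{A(\lambda)}L(\lambda)\iso L$, and descend simplicity by faithful flatness. The only (cosmetic) difference is that you invoke Corollary~\ref{cor:A-reflectssimple} directly where the paper cites Corollary~\ref{cor:SimpleCoherent}, whose proof reduces to the same corollary anyway, so your citation is if anything the more precise one.
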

\begin{proof}
We outline the proof for left ideals.

Write $L=Az$ and choose $\lambda\in\Lambda$ so
that $z\in A(\lambda)$. Then $L(\lambda)=A(\lambda)\cap L$
is a non-trivial left ideal in~$A(\lambda)$ and
$AL(\lambda)=L$. By the well-known criterion for
flatness of \cite{TYL:LectModules&Rings}*{(4.12)},
multiplication gives an isomorphism
\[
A\otimes_{A(\lambda)}L(\lambda)\xrightarrow{\iso}AL(\lambda)=L
\]
and Corollary~\ref{cor:SimpleCoherent} implies
that~$L(\lambda)$ is simple.
\end{proof}
\begin{cor}\label{cor:MinIdeal}
Let $L$ be a minimal left ideal in~$A$. Then either
$L^2=0$ or there is an idempotent $e\in A$ such
that $L=Ae$.

Similarly for a minimal right ideal in~$A$, either
$L^2=0$ or $L$ is generated by an idempotent.
\end{cor}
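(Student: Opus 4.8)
The plan is to prove this via the classical dichotomy of Brauer for minimal one-sided ideals, working intrinsically in~$A$ rather than pushing everything through Lemma~\ref{lem:MinIdeal}. The starting observation is that $L^2=L\cdot L\subseteq AL=L$ is a left ideal contained in~$L$, so minimality of~$L$ forces $L^2=0$ or $L^2=L$. In the first case we are immediately in the square-zero alternative, so the real work is to produce, when $L^2=L\neq0$, an idempotent $e$ with $L=Ae$.

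For the idempotent, since $L^2\neq0$ I would pick $x\in L$ with $Lx\neq0$ (otherwise $L^2=\sum_{x\in L}Lx=0$); note $x\neq0$. As $Lx$ is a nonzero left ideal contained in~$L$, minimality gives $Lx=L$, so there is $e\in L$ with $ex=x$. Next consider $I=\{a\in L:ax=0\}$, which is a left ideal of~$A$ (if $ax=0$ and $r\in A$ then $ra\in L$ and $(ra)x=r(ax)=0$) properly contained in~$L$, since $e\notin I$ because $ex=x\neq0$. By minimality $I=0$. Now $e^2-e\in L$ and $(e^2-e)x=e(ex)-ex=0$, so $e^2-e\in I=0$, giving $e^2=e$; moreover $e\neq0$, as $e=0$ would force $x=ex=0$. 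Finally $Ae\subseteq L$ is a nonzero left ideal, so $Ae=L$ by minimality. The statement for a minimal right ideal is dual.

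An alternative route is to feed in Lemma~\ref{lem:MinIdeal}: write $L=AL(\lambda)$ with $L(\lambda)=A(\lambda)\cap L$ a minimal left ideal of the Frobenius, hence Artinian, algebra~$A(\lambda)$, apply the classical result there to get $L(\lambda)=A(\lambda)e$ with $e^2=e\in A(\lambda)$, and conclude $L=A\,A(\lambda)e=Ae$ since $A\,A(\lambda)=A$. The main obstacle with this route is that it cleanly handles only the idempotent case: the square-zero property need not pass from~$A(\lambda)$ up to~$A$, because a nilpotent generator can generate a non-nil minimal ideal once the algebra is enlarged, so one is thrown back on the intrinsic argument above for the square-zero alternative in any case. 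For that reason I would present the self-contained Brauer argument as the proof and mention the reduction only as a remark.
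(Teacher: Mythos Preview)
Your main proof via Brauer's lemma is correct and entirely self-contained; it works in any ring. The paper, however, takes precisely the route you sketch as an alternative and then set aside: it invokes Lemma~\ref{lem:MinIdeal} and, assuming $L^2\neq0$, finds the idempotent in the Artinian algebra~$A(\lambda)$ and pushes it up via $L=A\,L(\lambda)=Ae$.

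Your objection to that route is not quite on target. The dichotomy $L^2=0$ or $L^2=L$ is established at the level of~$A$ by minimality (this is your own opening sentence), and only in the case $L^2\neq0$ does one descend. One never needs to lift the implication $L(\lambda)^2=0\Rightarrow L^2=0$; what is needed is the easy converse direction, that $L^2\neq0$ forces $L(\lambda)^2\neq0$ for~$\lambda$ chosen large enough to contain two elements $x,y\in L$ with $xy\neq0$ (the~$\lambda$ in Lemma~\ref{lem:MinIdeal} may be enlarged at will, since any~$\lambda$ containing a generator of~$L$ works). So the reduction is sound, and your remark undersells it.

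That said, your intrinsic argument is arguably cleaner: it avoids any appeal to the filtration by the~$A(\lambda)$ and makes no use of the locally Frobenius hypothesis. The paper's route buys consistency with the surrounding development, in which facts about minimal ideals and coherent modules are systematically proved by reduction to the Artinian subalgebras.
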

\begin{proof}
If $L^2\neq0$ then suppose $L(\lambda)=A(\lambda)\cap L\neq0$.
In the Artinan ring $A(\lambda)$, the minimal
ideal $L(\lambda)=L(\lambda)^2$ is generated
by an idempotent~$e$ and $L=Ae$.
\end{proof}

Our next result and its proof are direct
generalisations of
Richardson~\cite{JSR:GpRngsnonzerosocle}*{proposition~2.8}.
\begin{prop}\label{prop:Minleftideal->right}
Let $0\neq w\in A$. Then~$Aw$ is a minimal left
ideal of~$A$ if and only if~$wA$ is a minimal
right ideal. Hence the left and right socles
of~$A$ agree.
\end{prop}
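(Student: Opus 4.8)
The plan is to push the whole question down to the finite-dimensional symmetric Frobenius algebras $A(\lambda)$, settle it there using the Frobenius form, and transport the answer back up by faithful flatness. Since the hypotheses of Definition~\ref{defn:LocFrobAlg} are symmetric under interchanging left and right, it suffices to prove the equivalence ``$Aw$ minimal left $\IFF$ $wA$ minimal right''. The main tool is the following reduction principle: \emph{$Aw$ is a simple left $A$-module if and only if there is a $\lambda$ with $w\in A(\lambda)$ such that $A(\mu)w$ is a simple left $A(\mu)$-module for every $\mu\succcurlyeq\lambda$}. For the forward direction, Lemma~\ref{lem:MinIdeal} supplies a $\lambda$ with $A(\lambda)w$ simple and $Aw\iso A\otimes_{A(\lambda)}A(\lambda)w$; for $\mu\succcurlyeq\lambda$ the same multiplication-is-an-isomorphism argument gives $A(\mu)w\iso A(\mu)\otimes_{A(\lambda)}A(\lambda)w$ and $Aw\iso A\otimes_{A(\mu)}A(\mu)w$, so any proper nonzero $A(\mu)$-submodule of $A(\mu)w$ would, on inducing up along the faithfully flat extension $A$ over $A(\mu)$ (Proposition~\ref{prop:A-Faithfullyflat}), yield a proper nonzero $A$-submodule of $Aw$, contradicting simplicity. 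Conversely, since $Aw=\bigcup_{\mu\succcurlyeq\lambda}A(\mu)w$, a nonzero $A$-submodule $P\subseteq Aw$ meets some simple $A(\mu)w$, hence contains it and so contains $w=1\cdot w$, forcing $P=Aw$.

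The finite-level input is the classical fact that in a symmetric Frobenius algebra $R$ one has $Rw$ minimal left $\IFF$ $wR$ minimal right. I would derive it directly from the nondegenerate symmetric form $\langle a,b\rangle=\lambda(ab)=\lambda(ba)$. Nondegeneracy gives $\annr_R(w)=(Rw)^{\perp}$, and comparing dimensions with $Rw\iso R/\annl_R(w)$ and $wR\iso R/\annr_R(w)$ identifies $wR=(\annl_R(w))^{\perp}$. Because $R$ is symmetric its Nakayama automorphism is the identity, so $\perp$ is an inclusion-reversing bijection between left and right ideals; hence
\[
Rw\text{ minimal left}\ \IFF\ \annl_R(w)\text{ maximal left}\ \IFF\ wR=(\annl_R(w))^{\perp}\text{ minimal right}.
\]

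Combining these, with $R=A(\mu)$ for each $\mu\succcurlyeq\lambda$, produces the equivalence chain: $Aw$ minimal left $\IFF$ $A(\mu)w$ minimal left for all $\mu\succcurlyeq\lambda$ $\IFF$ $wA(\mu)$ minimal right for all $\mu\succcurlyeq\lambda$ $\IFF$ $wA$ minimal right, where the outer equivalences are the reduction principle and its right-handed mirror. For the socle statement, note that if $L$ is a minimal left ideal then any $0\neq w\in L$ satisfies $Aw=L$, so $wA$ is a minimal right ideal and $w\in wA$ lies in the right socle; thus every minimal left ideal is contained in the right socle, giving $\soc({}_AA)\subseteq\soc(A_A)$, and symmetrically the reverse inclusion. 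I expect the main obstacle to be the forward half of the reduction principle, namely showing that simplicity of $Aw$ over $A$ forces simplicity of $A(\mu)w$ at \emph{every} sufficiently large level rather than just one; the delicate point is marshalling faithful flatness (Proposition~\ref{prop:A-Faithfullyflat}) to convert a proper submodule downstairs into a proper submodule upstairs. The identity $wR=(\annl_R(w))^{\perp}$, where the symmetry hypothesis is genuinely used, is the conceptual heart that replaces Richardson's involution $g\mapsto g^{-1}$ from the group-ring setting.
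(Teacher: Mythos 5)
Your proof is correct and takes essentially the same route as the paper: both reduce the question to a cofinal family of finite-dimensional Frobenius algebras via $wA=\bigcup_{\mu}wA(\mu)$ (the paper invokes Lemma~\ref{lem:MinIdeal} where you prove your ``reduction principle'' in both directions), and both settle the finite level by the inclusion-reversing bijection between left and right ideals of a Frobenius algebra, which you make explicit as $wR=(\annl_R(w))^{\perp}$. You merely spell out steps the paper leaves implicit, so no further comparison is needed.
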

\begin{proof}
If $Aw$ is a minimal left ideal then by
Lemma~\ref{lem:MinIdeal} there is a
$\lambda\in\Lambda$ such that
$w\in A(\lambda)$ and $A(\lambda)\cap Aw$
is a minimal left ideal in $A(\lambda)$.
Since~$wA$ is the union of the $wA(\lambda)$
over all such~$\lambda$, it suffices to
show that $wA(\lambda)$ is a minimal right
ideal. There is an isomorphism of left
$A(\lambda)$-modules
\[
A(\lambda)w \xrightarrow{\iso} A(\lambda)/\annl_{A(\lambda)}(w)
\]
so $\annl_{A(\lambda)}(w)=\ann_{A(\lambda)}(wA(\lambda))$
which is a maximal left ideal. As~$A(\lambda)$
is Frobenius its left and right submodule
lattices correspond under an inclusion
reversing bijection, therefore~$wA(\lambda)$
is the annihilator ideal of~$A(\lambda)w$
which is a  minimal right ideal.
\end{proof}

Another result of
Richardson~\cite{JSR:GpRngsnonzerosocle}*{lemma~2.7}
(see also Hartley \& Richardson~\cite{BH&JSR:SocleGpRngs})
can be extended to a locally Frobenius algebra.
We know that a minimal left ideal of~$A$ is a
coherent $A$-module and so infinite dimensional;
nevertheless this result gives some sort of
finiteness albeit over a division algebra
which is infinite dimensional over~$\k$.
\begin{prop}\label{prop:Richardson-lemma2.7}
Let $L=Aw$ be a minimal left ideal of $A$. \\
\emph{(a)} The ideal~$L$ is finite dimensional
over the division algebra\/~$\End_A(L)$. \\
\emph{(b)} Every finite subset of\/~$\End_A(L)$
lies in a finite dimensional separable\/
$\k$-subalgebra. \\
\emph{(c)} If\/ $\Char\k>0$, then\/~$\End_A(L)$
is a field.
\end{prop}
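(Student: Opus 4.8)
The plan is to reduce the whole statement to the finite levels $A(\mu)$ and then pass to a filtered colimit, the real work being a dimension-stabilisation at the finite levels together with a characteristic-$p$ commutativity fact. First I would fix, via Lemma~\ref{lem:MinIdeal}, a $\lambda$ for which $L(\lambda)=A(\lambda)\cap L$ is a non-trivial minimal left ideal of $A(\lambda)$ and $L\iso A\otimes_{A(\lambda)}L(\lambda)$. For each $\mu\succcurlyeq\lambda$ set $L(\mu)=A(\mu)\cap L=A(\mu)w$; the flatness criterion used in Lemma~\ref{lem:MinIdeal} gives $L(\mu)\iso A(\mu)\otimes_{A(\lambda)}L(\lambda)$, and since $A\otimes_{A(\mu)}L(\mu)\iso L$ is simple, Corollary~\ref{cor:A-reflectssimple} shows each $L(\mu)$ is a simple $A(\mu)$-module. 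Thus $L=\bigcup_{\mu\succcurlyeq\lambda}L(\mu)$ is an increasing union of finite-dimensional simple modules over the finite-dimensional Frobenius algebras $A(\mu)$. Writing $D=\End_A(L)$ and $D(\mu)=\End_{A(\mu)}(L(\mu))$, each $D(\mu)$ is a finite-dimensional division $\k$-algebra by Schur's Lemma.

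Next I would identify $D$ with $\colim_\mu D(\mu)$. The base-change maps $D(\mu)\to D(\mu')$, $f\mapsto\Id\otimes f$ under $L(\mu')\iso A(\mu')\otimes_{A(\mu)}L(\mu)$, are injective $\k$-algebra homomorphisms by the faithful-flatness reflection of Corollary~\ref{cor:A-reflectsepimono}; they assemble to a map $\colim_\mu D(\mu)\to D$, $f\mapsto\Id_A\otimes f$, injective by the same reflection and surjective because any $A$-endomorphism of the coherent module $L$ descends to an $A(\mu)$-endomorphism of $L(\mu)$ for some $\mu$ by Proposition~\ref{prop:set-up}(d) together with Corollary~\ref{cor:set-up-SES}. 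Hence $D$ is a filtered union of the finite-dimensional division subalgebras $D(\mu)$; in particular it is locally finite over $\k$ and every finite subset of $D$ already lies in a single $D(\mu)$. Part~(b) then reduces to showing that each such $D(\mu)$ lies in a finite-dimensional separable $\k$-subalgebra, which I would obtain by adapting Richardson's argument at the finite levels, where $D(\mu)$ is the endomorphism algebra of the simple module $L(\mu)$.

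For part~(a) I would fix a $D(\lambda)$-basis $v_1,\dots,v_n$ of $L(\lambda)$ and argue that it is a $D$-basis of $L$: any finite $D$-linear relation among the $v_i$ already holds in some $D(\mu)$, and any element of $L$ lies in some $L(\mu)$, so everything reduces to the single claim that the evaluation map $D(\mu)\otimes_{D(\lambda)}L(\lambda)\to L(\mu)$ is an isomorphism for all $\mu\succcurlyeq\lambda$, equivalently that $\dim_{D(\mu)}L(\mu)$ is independent of $\mu$. I expect this stabilisation to be the \emph{main obstacle}. I would attack it through Frobenius reciprocity for the free Frobenius extension $A(\mu):A(\lambda)$ (Proposition~\ref{prop:FrobExtn}, so that $\ind$ is both a left and a right adjoint to restriction), which gives $\dim_\k D(\mu)=\dim_\k\Hom_{A(\lambda)}(L(\lambda),\operatorname{Res}_{A(\lambda)}L(\mu))$; the heart of the matter is then to show that $\operatorname{Res}_{A(\lambda)}L(\mu)$ is $L(\lambda)$-isotypic of multiplicity $\rank_{A(\lambda)}A(\mu)$, which is where I anticipate the freeness and symmetry of the extension are genuinely used. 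Granting the stabilisation, $\dim_D L=n$, proving~(a); by the Jacobson density theorem this also identifies $A/\ann_A(L)\iso M_n(D^\op)$.

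Finally, for part~(c) with $\Char\k>0$, since $D=\bigcup_\mu D(\mu)$ it suffices to show that each $D(\mu)$ is commutative. This cannot be read off a single finite level in isolation — a noncommutative $D(\mu)$ would embed into $D$ — so I would exploit that the simple module $L(\mu)$ persists, via the isomorphisms $L(\mu')\iso A(\mu')\otimes_{A(\mu)}L(\mu)$, as an induced-simple module all the way up the infinite tower; this is exactly the restrictive phenomenon underlying non-vanishing of the socle. Combined with the separability from~(b) and the characteristic-$p$ input that the relevant Schur indices are trivial, this should force $D(\mu)$, hence $D$, to be a field. I regard the dimension-stabilisation in~(a) and this characteristic-$p$ commutativity as the two points where real work, beyond the formal colimit bookkeeping, is required.
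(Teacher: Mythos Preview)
The paper gives essentially no proof here: it says only that the argument is a routine reworking of Richardson's proof for locally finite group algebras, replacing group algebras of finite subgroups by the Frobenius algebras~$A(\lambda)$, and leaves the details to the reader. Your proposal is therefore being compared against Richardson's original argument rather than anything in the paper, and at the structural level you are doing exactly what the paper asks: you set up the filtered system $L(\mu)=A(\mu)w$, identify $D=\End_A(L)$ with $\colim_\mu D(\mu)$ using faithful flatness (Proposition~\ref{prop:A-Faithfullyflat}, Corollaries~\ref{cor:A-reflectssimple} and~\ref{cor:A-reflectsepimono}, Proposition~\ref{prop:set-up}(d)), and reduce all three parts to statements that can be attacked level by level. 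This is the intended shape of the adaptation.

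One point deserves more care. For part~(a) you reduce to the stabilisation $\dim_{D(\mu)}L(\mu)=\dim_{D(\lambda)}L(\lambda)$ and propose to obtain it by showing that $\operatorname{Res}_{A(\lambda)}L(\mu)$ is $L(\lambda)$-isotypic of multiplicity $\rank_{A(\lambda)}A(\mu)$. A Frobenius-reciprocity count shows this isotypic statement is \emph{equivalent} to the stabilisation, but it does not follow from a single Frobenius extension $A(\mu):A(\lambda)$ in isolation: for an arbitrary free Frobenius extension with $\operatorname{Ind}L(\lambda)$ simple, the restriction $\operatorname{Res}\operatorname{Ind}L(\lambda)$ is typically not isotypic (already for group algebras Mackey's formula produces other constituents). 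What forces the isotypic behaviour here is precisely that $L(\mu')$ stays simple for \emph{all} $\mu'\succcurlyeq\lambda$, i.e.\ the infinite tower, which you invoke for~(c) but not for~(a). So your diagnosis of where the obstacle lies is right, but the mechanism you sketch (``freeness and symmetry of the extension'') is not by itself enough; the argument needs the persistence of simplicity up the whole directed system, exactly as in Richardson. A similar remark applies to~(b): the separability of $D(\mu)$ is not a fact about one finite-dimensional algebra $A(\mu)$ but again uses the tower, so ``at the finite levels'' understates what is needed. None of this is fatal to your plan, but these are the places where the genuine content of Richardson's proof enters and should not be treated as local to a single~$\mu$.
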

\begin{proof}
This involves a routine reworking of that for
locally finite group algebras and is left to
the reader, the main point to notice is that
group algebras of finite subgroups need to be
replaced by Frobenius algebras~$A(\lambda)$.
\end{proof}

For locally Frobenius Hopf algebras our results
on \fd. modules should be compared with the
following.
\begin{prop}[See~\cite{ML:TourRepThy}*{proposition~10.6}]
\label{prop:HopfAlg-fdideal->infdim}
Suppose that~$H$ is a Hopf algebra over\/ $\k$.
If~$H$ is infinite dimensional then it has no
non-trivial \fd. left/right ideals.
\end{prop}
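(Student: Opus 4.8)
The plan is to prove the contrapositive: assuming $H$ possesses a non-zero \fd. right ideal $I$ (the argument for left ideals being symmetric), I would deduce that $H$ is itself \fd. A first reduction replaces $I$ by a principal ideal, since any non-zero right ideal contains $tH$ for some $0\neq t\in I$, and $tH$ is again a non-zero \fd. right ideal; so it suffices to treat the case $I=tH$ with $\dim_\k tH<\infty$. Viewing $I$ as a \fd. right $H$-module, its annihilator $\ann_H(I)\ideal H$ is a two-sided ideal for which $H/\ann_H(I)$ embeds into $\End_\k(I)$, so $\ann_H(I)$ has \emph{finite codimension}. Thus a \fd. one-sided ideal immediately yields a cofinite two-sided ideal together with a non-zero element $t$ satisfying $t\cdot\ann_H(I)=0$.

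The point that makes the conclusion available only for Hopf algebras is that a one-sided ideal is in no sense a subcoalgebra, so the multiplicative structure alone cannot force finite dimensionality (for example $\k[x]$ has cofinite ideals yet is infinite dimensional). The comultiplication and antipode must be brought in to couple the two structures. Concretely, writing $\Delta(t)=\sum t_{(1)}\otimes t_{(2)}$ and using the antipode identity $\sum S(t_{(1)})t_{(2)}=\epsilon(t)\,1$, one obtains for every $h\in H$ the reconstruction
\[
\epsilon(t)\,h=\sum S(t_{(1)})\,t_{(2)}\,h,
\]
which expresses an arbitrary element of $H$ through the finitely many fixed elements $S(t_{(1)})$ and the translates $t_{(2)}h$. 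The efficient way to organise this, and the route I would follow, is to feed the \fd. module $I$ into the Fundamental Theorem of Hopf Modules: one builds a right--right Hopf module out of $I$ and the regular comodule on $H$, and the isomorphism $M\iso M^{\mathrm{co}H}\otimes H$ supplied by that theorem forces $\dim_\k H<\infty$ (equivalently, it produces a non-zero integral in $H$, and a Hopf algebra with a non-zero integral is finite dimensional); see Lorenz~\cite{ML:TourRepThy}.

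The hard part is precisely this coupling step. The annihilator $\ann_H(I)$ is merely an ideal and not a Hopf ideal, so passing to $H/\ann_H(I)$ discards the coalgebra structure and cannot by itself detect finiteness; and the displayed reconstruction only becomes decisive once the translates $t_{(2)}h$ are known to range over a \fd. space, which is exactly what the Hopf-module argument delivers. A secondary technical nuisance is that the antipode of a general Hopf algebra need not be bijective, so any argument that tries to convert the right ideal $I$ into a left ideal via $S$, or to transport $\ann_H(I)$ across $S$, must be arranged so as not to assume invertibility of $S$. For these reasons I would lean on the Hopf-module formulation rather than a bare-hands manipulation of the ideal.
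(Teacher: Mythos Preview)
The paper does not supply its own proof of this proposition: it is stated with a bare citation to Lorenz~\cite{ML:TourRepThy}*{proposition~10.6} and no argument is given. Your outline---reducing to the contrapositive, observing that the algebra structure alone cannot force finiteness, and then invoking the Fundamental Theorem of Hopf Modules (equivalently, the existence of a non-zero integral forcing finite dimensionality)---is exactly the standard route and is what the cited reference does, so there is nothing to compare beyond noting that you and the paper defer to the same source.

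One small caution on your parenthetical: a non-zero \fd. one-sided ideal does not \emph{directly} hand you a non-zero integral (a minimal left ideal inside it is simple but need not be the trivial module~$\k$); rather, the Hopf-module argument shows $H$ is \fd., and \emph{then} Larson--Sweedler supplies the integral. So the ``equivalently'' is correct as a statement about equivalent conditions, but the logical flow goes through finite dimensionality first. Your remarks about not assuming bijectivity of the antipode and about why the annihilator ideal alone is insufficient are apt and reflect genuine features of the standard proof.
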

Combining this with Proposition~\ref{prop:SimpleMod-coherent}
we obtain a result which also follows from
Proposition~\ref{prop:cohmod-infdim}.
\begin{cor}\label{cor:HopfAlg-fdideal->infdim}
If $H$ is a locally Frobenius Hopf algebra
then no \fd. simple module is coherent.
\end{cor}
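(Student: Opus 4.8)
The plan is to argue by contradiction, combining the two Hopf-algebra–specific results stated immediately before the corollary. First I would record the one structural input: a locally Frobenius algebra is by definition infinite dimensional, so in particular the underlying Hopf algebra $H$ is an infinite dimensional Hopf algebra over~$\k$. This is exactly the hypothesis needed to invoke Proposition~\ref{prop:HopfAlg-fdideal->infdim}.

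Now suppose, for contradiction, that $S$ is a \fd. simple $H$-module which \emph{is} coherent. Proposition~\ref{prop:SimpleMod-coherent} applies verbatim with $A=H$ and tells us that a simple module is coherent if and only if it is isomorphic to a submodule of~$H$; thus $S$ is isomorphic to a left (or right) ideal $L\subseteq H$. Since $S$ is simple it is non-zero, and since $S$ is \fd. the ideal $L$ is a non-trivial \fd. left/right ideal of~$H$. But Proposition~\ref{prop:HopfAlg-fdideal->infdim} asserts that an infinite dimensional Hopf algebra has no such ideal. This is the desired contradiction, so no \fd. simple module can be coherent.

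There is essentially no technical obstacle here; the corollary is a direct splicing of two earlier results, and the only thing worth checking is that the two statements match up — that Proposition~\ref{prop:SimpleMod-coherent} is about \emph{simple} modules (which $S$ is) and that the infinite-dimensionality hypothesis of Proposition~\ref{prop:HopfAlg-fdideal->infdim} is automatic from the definition of locally Frobenius. I would also note, as the text signals, that the conclusion follows even more directly and without using the Hopf structure at all: by Proposition~\ref{prop:cohmod-infdim} every coherent $A$-module is infinite dimensional as a $\k$-vector space, so a \fd. module (simple or not) can never be coherent. I would present the Hopf-theoretic argument as the main proof and mention this alternative as a one-line remark.
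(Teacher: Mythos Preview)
Your argument is correct and is exactly the one the paper intends: combine Proposition~\ref{prop:SimpleMod-coherent} with Proposition~\ref{prop:HopfAlg-fdideal->infdim}, using that a locally Frobenius Hopf algebra is infinite dimensional. The paper likewise notes the alternative route via Proposition~\ref{prop:cohmod-infdim}, just as you do.
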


\subsection*{Coherent projective covers}

In general \fg. modules over a coherent ring
may not have projective covers in the usual
sense. But we can define something similar
for coherent modules over a locally Frobenius
algebra~$A$.

Consider the coherent $A$-module
$A\otimes_{A(\alpha)}M_\alpha$ where $M_\alpha$
is a \fg. (hence \fd.) $A(\alpha)$-module.
If~$\alpha\preccurlyeq\beta$ then
\[
M_\beta = A(\beta)\otimes_{A(\alpha)}M_\alpha
\]
has dimension
\[
\dim_\k M_\beta
=
\frac{\dim_\k A(\beta)\dim_\k M_\alpha}{\dim_\k A(\alpha)}.
\]
so
\[
\frac{\dim_\k M_\beta}{\dim_\k A(\beta)}
=
\frac{\dim_\k M_\alpha}{\dim_\k A(\alpha)}.
\]
By Corollary~\ref{cor:set-up-SES}, if there
is an isomorphism of $A$-modules
$A\otimes_{A(\alpha)}M_\alpha
\iso
A\otimes_{A(\beta)}N_\beta$
then for some large enough $\gamma$ there
is an isomorphism of $A(\gamma)$-modules
\[
M_\gamma=A(\gamma)\otimes_{A(\alpha)}M_\alpha
\iso A(\gamma)\otimes_{A(\beta)}N_\beta=N_\gamma,
\]
therefore
\[
\frac{\dim_\k M_\gamma}{\dim_\k A(\gamma)}
=
\frac{\dim_\k N_\gamma}{\dim_\k A(\gamma)}.
\]
This shows that to any coherent $A$-module
we can assign an isomorphism invariant
rational number by taking its \emph{coherent
dimension} to be
\[
\cohdim(A\otimes_{A(\alpha)}M_\alpha)
=
\frac{\dim_\k M_\alpha}{\dim_\k A(\alpha)}.
\]

Now let $M$ be a coherent $A$-module. Then
there exist epimorphisms of $A$-modules
$P\to M$ where~$P$ is a coherent projective
module. By Corollary~\ref{cor:A-reflectsepimono},
such an epimorphism can be taken to be induced
up from an epimorphism of~$A(\alpha)$-modules
$P_\alpha\to M_\alpha$ for some $\alpha\in\Lambda$,
where~$P_\alpha$ is projective; we will refer to
such epimorphisms as \emph{coherent epimorphisms}.
Notice also that $\dim_\k P_\alpha\geq\dim_\k M_\alpha$
so
\[
\cohdim(A\otimes_{A(\alpha)}P_\alpha)
\geq\cohdim(A\otimes_{A(\alpha)}M_\alpha).
\]

In fact we have the following useful observation.
Let $\phi\:P\to N$ be a homomorphism of coherent
$A$-modules with~$P$ coherent projective,
and let $\theta\:M\to N$ be a coherent epimorphism.
Then we can factor~$\phi$ through~$M$.
\[
\xymatrix{
& M\ar[d]^\theta \\
P\ar[r]^\phi\ar@/^15pt/[ur]^{\tilde\phi} & N
}
\]
By choosing a large enough $\alpha\in\Lambda$
we can find underlying homomorphisms of \fg.
$A(\alpha)$-modules
\[
\xymatrix{
& M_\alpha\ar[d]^{\theta_\alpha} \\
P_\alpha\ar[r]^{\phi_\alpha}\ar@/^15pt/[ur]^{\tilde\phi_\alpha}
& N_\alpha
}
\]
which on applying $A\otimes_{A(\alpha)}(-)$ induce
the diagram of $A$-modules. We need to check that
this second diagram commutes. Since
$\theta_\alpha\circ\tilde\phi_\alpha$ and
$\phi_\alpha$ induce the same homomorphism $P\to M$,
so by Proposition~\ref{prop:FF}(a)
$\theta_\alpha\circ\tilde\phi_\alpha-\phi_\alpha$
must be trivial and this diagram also commutes.

Now suppose that we have two coherent epimorphisms
$P\xrightarrow{\phi} M\xleftarrow{\theta} Q$ where~$P$
and~$Q$ are coherent projectives. The preceding
discussion shows that we can assume that for some
$\lambda\in\Lambda$ there are there are projective
$A(\lambda)$-modules $P_\lambda$ and $Q_\lambda$
for which $P\iso A\otimes_{A(\lambda)}P_\lambda$
and $Q\iso A\otimes_{A(\lambda)}Q_\lambda$ together
with a \fg. $A(\lambda)$-module $M_\lambda$ for
which $M\iso A\otimes_{A(\lambda)}M_\lambda$;
furthermore there is a commutative diagram of
$A(\lambda)$-modules
\[
\xymatrix{
P_\lambda\ar@{->>}@/_10pt/[dr]_{\phi_\lambda}\ar[r]^\rho
& Q_\lambda\ar@{->>}[d]_{\theta_\lambda}\ar[r]^\sigma
& P_\lambda\ar@{->>}@/^10pt/[dl]^{\phi_\lambda}  \\
& M_\lambda &
}
\]
where $\phi_\lambda$ and $\theta_\lambda$ induce
$\phi$ and~$\theta$. On tensoring with~$A$ and
using faithful flatness we see that
$\sigma\circ\rho=\Id-{P_\lambda}$, so~$P_\lambda$
is a summand of~$Q_\lambda$ and
$\dim_\k P_\lambda\geq\dim_\k Q_\lambda$.

We can now define the \emph{coherent rank of~$M$}
to be
\[
\cohrk M =
\min\{\cohdim P :
\text{there is a coherent epi. $P\to M$}\}
\geq \cohdim(M).
\]
If we realise this minimum with a coherent epimorphism
$P\to M$, then for any other such coherent epimorphism
$Q\to M$ the above discussion shows that $\cohdim P=\cohdim Q$
and so~$P\iso Q$. Thus we can regard $P\to M$ as a
\emph{coherent projective cover}.


\subsection*{Pseudo-coherent modules}

In Bourbaki~\cite{Bourbaki:HomAlg}*{ex.~\S3.10},
as well as coherent modules, pseudo-coherent
modules are considered, where a module is
\emph{pseudo-coherent} if every finitely
generated submodule is finitely presented
(so over a coherent ring it is coherent).
The reader is warned that pseudo-coherent
is sometimes used in a different sense; for
example, over a coherent ring the definition
of Weibel~\cite{CAW:Ktheory}*{example~II.7.1.4}
corresponds to our coherent.

Examples of pseudo-coherent modules over a coherent
ring~$A$ include coproducts of coherent modules
such as the \fr. modules of Definition~\ref{defn:fg-fr-fp}.
However pseudo-coherent modules do not form a full
abelian subcategory of~$\Mod_A$ since for example
cokernels of homomorphisms between pseudo-coherent
modules need not be pseudo-coherent. Nevertheless,
pseudo-coherent modules do occur quite commonly
when working with coherent modules over locally
Frobenius algebras.

\begin{lem}\label{lem:pc-colimit}
Let $A$ be a ring and $M$ a pseudo-coherent
$A$-module. Then~$M$ is the union of its
coherent submodules and therefore their
colimit.
\end{lem}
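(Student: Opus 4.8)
The plan is to reduce everything to a single elementary observation: every \fg. submodule of a pseudo-coherent module is coherent. Both assertions of the lemma then follow at once.

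First I would record that pseudo-coherence passes to submodules: if $N\subseteq M$ and $M$ is pseudo-coherent, then any \fg. submodule of $N$ is in particular a \fg. submodule of $M$, hence \fp. by hypothesis, so $N$ is pseudo-coherent. Consequently, given any \fg. submodule $N\subseteq M$, it is pseudo-coherent by this remark and \fg. by assumption, so it is coherent by Definition~\ref{defn:Coherence}. In particular, for each $m\in M$ the cyclic submodule $Am$ is \fg., hence coherent, and since $m\in Am$ we conclude
\[
M=\bigcup_{m\in M}Am,
\]
exhibiting $M$ as the union of its coherent submodules.

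It remains to upgrade this union to a colimit, for which I would verify that the coherent submodules of $M$ form a directed (filtered) system under inclusion. Given coherent submodules $N_1,N_2\subseteq M$, the sum $N_1+N_2$ is \fg. (a union of finite generating sets serves as a generating set) and is a submodule of~$M$, hence coherent by the previous step; it contains both $N_1$ and $N_2$. Thus the inclusion-ordered poset of coherent submodules is directed, and the union computed above is precisely the filtered colimit $\colim N$ taken over this system of coherent submodules.

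Honestly this statement is routine and I do not anticipate a genuine obstacle; the one point meriting care is the passage from ``\fg. submodule'' to ``coherent submodule,'' which rests entirely on the sub-pseudo-coherence observation together with the definition of coherent as \fg.\ plus pseudo-coherent. Once that is in hand, the directedness of the system (ensuring ``union'' can legitimately be read as ``colimit'') is immediate from closure of coherent submodules under finite sums.
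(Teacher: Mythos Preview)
Your argument is correct and follows essentially the same route as the paper, which simply observes that each cyclic submodule $Am$ is coherent. You supply the extra details (pseudo-coherence passes to submodules, and the coherent submodules are directed under inclusion) that the paper leaves implicit.
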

\begin{proof}
Every element~$m\in M$ generates a cyclic
submodule which is coherent.
\end{proof}

\begin{lem}\label{lem:pc-extended}
Let $A$ be a coherent ring and $B$ an Artinian
subring where~$A$ is flat as a right $B$-module.
Then every extended $A$-module $A\otimes_BN$
is pseudo-coherent.
\end{lem}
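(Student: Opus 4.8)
The plan is to realise $A\otimes_B N$ as a directed union of coherent submodules and then observe that, by finite generation, any finitely generated submodule must already sit inside one of these stages, which forces it to be finitely presented.

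First I would write $N$ as the directed union $N=\bigcup_i N_i$ of its finitely generated $B$-submodules; these form a directed system under inclusion since the sum of two finitely generated submodules is again finitely generated. Because $B$ is Artinian it is in particular Noetherian, so each $N_i$ is finitely presented as a $B$-module. Tensoring a finite presentation $B^m\to B^n\to N_i\to 0$ with $A$ and using right-exactness then yields a finite presentation $A^m\to A^n\to A\otimes_B N_i\to 0$, so each $A\otimes_B N_i$ is finitely presented over $A$; since $A$ is coherent, it is in fact coherent by Remark~\ref{rem:Coherence}.

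Next I would bring in the flatness hypothesis. As $A$ is flat as a right $B$-module, the functor $A\otimes_B(-)$ is exact, so each inclusion $N_i\hookrightarrow N$ induces a monomorphism $A\otimes_B N_i\hookrightarrow A\otimes_B N$; and since the tensor product commutes with the directed colimit, these monomorphisms exhibit $A\otimes_B N$ as the directed union $\bigcup_i A\otimes_B N_i$ of the coherent submodules $A\otimes_B N_i$.

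Finally, let $P\subseteq A\otimes_B N$ be any finitely generated submodule, with generators $p_1,\dots,p_k$. By directedness each $p_j$ lies in some $A\otimes_B N_i$, and passing to a common upper bound I may assume $P\subseteq A\otimes_B N_i$ for a single index~$i$. Since $A\otimes_B N_i$ is coherent, its finitely generated submodule $P$ is finitely presented, this being exactly the defining property of a coherent module in Definition~\ref{defn:Coherence}. Hence every finitely generated submodule of $A\otimes_B N$ is finitely presented, so $A\otimes_B N$ is pseudo-coherent. The proof is short, and the only points needing care are that the maps $A\otimes_B N_i\hookrightarrow A\otimes_B N$ really are injective, which is precisely where the right $B$-flatness of $A$ is used, and the elementary but essential finiteness observation that a finitely generated submodule is confined to a single stage of the directed union.
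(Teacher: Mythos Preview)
Your proof is correct and follows essentially the same route as the paper: both arguments pick a finitely generated submodule $U\subseteq A\otimes_B N$, observe that its generators lie in $A\otimes_B N'$ for some finitely generated $N'\subseteq N$, use Artinian $\Rightarrow$ Noetherian to get $N'$ finitely presented, and conclude via coherence of~$A$. Your framing via the directed union $N=\bigcup_i N_i$ is slightly more explicit, and you are more precise than the paper about where flatness is actually needed (injectivity of $A\otimes_B N_i\hookrightarrow A\otimes_B N$, not the finite presentation of $A\otimes_B N_i$, which follows from right-exactness alone).
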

\begin{proof}
Let $U\subseteq A\otimes_BN$ be a finitely
generated submodule. Taking a finite generating
set and expressing each element as a sum of basic
tensors we find that $U\subseteq A\otimes_BU'$
where $U'\subseteq N$ is a finitely generated
submodule. As~$B$ is Artinian and so Noetherian,~$U'$
is finitely presented, so by flatness of~$A$,
$A\otimes_BU'$ is a finitely presented $A$-module.
Since $A$ is coherent,~$U$ is also finitely
presented.
\end{proof}

An important special case of this occurs when~$A$
is a coherent algebra over a field~$\k$ and~$B$
is a finite dimensional subalgebra. If~$N$ is
a $B$-module then it is locally finite, i.e.,
every element is contained in a finite
dimensional submodule.

We will discuss pseudo-coherence for modules
over a locally Frobenius Hopf algebra in
Section~\ref{sec:LFHA}.

The following stronger notions of pseudo-coherence
are perhaps more likely to be important for example
for computational purposes.
\begin{defn}\label{defn:spc}{\ }
\begin{itemize}
\item
A module over a coherent ring is \emph{strongly
pseudo-coherent\/} if it is a coproduct of
coherent modules.
\item
A module $M$ over a locally Frobenius algebra~$A$
indexed on~$\Lambda$ is \emph{$\lambda$-strongly
pseudo-coherent} for $\lambda\in\Lambda$ if
\[
M\iso A\otimes_{A(\lambda)}M'
\]
where the $A(\lambda)$-module~$M'$ is a coproduct
of finitely generated $A(\lambda)$-modules.
\end{itemize}
\end{defn}

Over a coherent ring every free module is strongly
pseudo-coherent as is every finitely related
module since it is the sum of a coherent module
and a free module. Clearly a $\lambda$-strongly
pseudo-coherent module over a locally Frobenius
algebra~$A$ is strongly pseudo-coherent.

\subsection*{Cohomology for finite dimensional
$A$-modules}
To end this section we discuss $\Ext$ groups
for finite dimensional modules. We will work
with left modules, but a similar discussion
applies to right modules.

If $W$ is an $A(\lambda)$-module for some
$\lambda\in\Lambda$ and~$N$ is an $A$-module
(hence an $A(\lambda)$-module via restriction),
then $A\otimes_{A(\lambda)}W$ is an $A$-module
and
\[
\Hom_{A(\lambda)}(W,N) \iso
\Hom_{A(\lambda)}(A\otimes_{A(\lambda)}W,N).
\]
More generally, since $A$ is $A(\lambda)$-flat,
\[
\Ext^s_{A(\lambda)}(W,N) \iso
\Ext^s_{A}(A\otimes_{A(\lambda)}W,N).
\]
If $M,N$ are $A$-modules viewed as
$A(\lambda)$-modules through restriction,
then
\[
\colim_{(\Lambda,\preccurlyeq)}A\otimes_{A(\lambda)}M
\iso M
\]
as $A$-modules and
\begin{equation}\label{eq:SS-SES-Hom}
\Hom_{A}(M,N) \iso
\Hom_{A}(\colim_{(\Lambda,\preccurlyeq)}A\otimes_{A(\lambda)}M,N)
\iso \lim_{(\Lambda,\preccurlyeq)}\Hom_{A(\lambda)}(M,N).
\end{equation}

There is a spectral sequence of
Jensen~\cite{LNM254}*{th\'eor\`eme~4.2} of the
form
\[
\mathrm{E}_2^{s,t} =
{\lim_{(\Lambda,\preccurlyeq)}}^{\!s}\Ext^t_{A}(A\otimes_{A(\lambda)}M,N)
\Lra \Ext^{s+t}_{A}(M,N),
\]
where $\ds{\lim_{(\Lambda,\preccurlyeq)}}^{\!s}$
is the $s$-th right derived functor of
$\ds\lim_{(\Lambda,\preccurlyeq)}$. The
$\mathrm{E}_2$-term can be rewritten to
give
\begin{equation}\label{eq:SS}
\mathrm{E}_2^{s,t} =
{\lim_{(\Lambda,\preccurlyeq)}}^{\!s}\Ext^t_{A(\lambda)}(M,N)
\Lra \Ext^{s+t}_{A}(M,N).
\end{equation}
When $\Lambda$ is countable, by a result
of Jensen~\cite{LNM254}, for~$s>1$,
$\ds{\lim_{\lambda\in\Lambda}}^{\!s}$ is
trivial, so for each $n\geq1$ there is an
exact sequence
\begin{equation}\label{eq:SS-SES-Ext}
0 \to
{\lim_{(\Lambda,\preccurlyeq)}}^{\!1}\Ext^{n-1}_{A(\lambda)}(M,N)
\to \Ext^{n}_{A}(M,N) \to
\lim_{(\Lambda,\preccurlyeq)}\Ext^{n}_{A(\lambda)}(M,N).
\to 0
\end{equation}

Suppose that $J$ is an injective $A$-module
and
\[
\xymatrix{
0 \ar[r] & U\ar[r]\ar[d] & V  \\
 & J &
}
\]
is a diagram of $A(\lambda)$-modules with
an exact row. By flatness of~$A$ as an
$A(\lambda)$-module,
\[
\xymatrix{
0 \ar[r] & A\otimes_{A(\lambda)}U\ar[r]
& A\otimes_{A(\lambda)}V
}
\]
is an exact sequence of $A$-modules and
on applying $\Hom_{A(\lambda)}(-,J)$ we
obtain a commutative diagram
\[
\xymatrix{
& \Hom_{A(\lambda)}(U,J)\ar@{<->}[d]_\iso
& \ar[l]\Hom_{A(\lambda)}(V,J)\ar@{<->}[d]^\iso  \\
0 & \ar[l]\Hom_{A}(A\otimes_{A(\lambda)}U,J)
& \ar[l]\Hom_{A}(A\otimes_{A(\lambda)}V,J)
}
\]
where the lower row is exact by injectivity
of the $A$-module~$J$. It follows that~$J$
is an injective $A(\lambda)$-module.

Now taking $N=J$ and using \eqref{eq:SS-SES-Ext}
with $n=1$ we obtain
\[
{\lim_{(\Lambda,\preccurlyeq)}}^{\!1}\Hom_{A(\lambda)}(M,J)
= 0,
\]
and by using~\eqref{eq:SS-SES-Hom},
\[
\Hom_{A}(M,J) \iso
\lim_{(\Lambda,\preccurlyeq)}\Hom_{A(\lambda)}(M,J).
\]

We know that $A$ is relatively injective with
respect to coherent $A$-modules. For a finite
dimensional non-coherent $A$-module~$M$, the
spectral sequence~\eqref{eq:SS} gives
\[
\mathrm{E}_2^{s,t} =
{\lim_{(\Lambda,\preccurlyeq)}}^{\!s}\Ext^t_{A(\lambda)}(M,A)
\Lra \Ext^{s+t}_{A}(M,A)
\]
and as $A$ is $A(\lambda)$-injective for every
$\lambda$, while if $t=0$,
\[
\mathrm{E}_2^{s,t} =
\begin{dcases*}
{\lim_{(\Lambda,\preccurlyeq)}}^{\!s}\Hom_{A(\lambda)}(M,A) & if $t=0$, \\
\quad\quad\quad\quad\quad 0  & if $t>0$.
\end{dcases*}
\]
So we have
\[
\Ext^n_{A}(M,A) \iso
{\lim_{(\Lambda,\preccurlyeq)}}^{\!n}\Hom_{A(\lambda)}(M,A).
\]
In particular, if $\Lambda$ is countable,
$\Ext^n_{A}(M,A)=0$ when~$n>1$.
\begin{defn}\label{defn:S-good}
If $S$ is a finite dimensional simple $A$-module
then $A$ is \emph{$S$-Margolisian} if $\Ext^n_{A}(S,A)=0$
for all~$n>0$.

If $A$ is $S$-Margolisian for all finite dimensional
simple $A$-modules then~$A$ is~\emph{Margolisian}.
\end{defn}

So $A$ is Margolisian if for every finite dimensional
non-coherent simple $A$-module $S$ and every~$s>0$,
\[
{\lim_{(\Lambda,\preccurlyeq)}}^{\!s}\Hom_{A(\lambda)}(M,A)
= 0.
\]
We can summarise this in a result which is
obtained by combining these ideas with what
we already know for coherent finite dimensional
$A$-modules.
\begin{prop}\label{prop:Good}
$A$ is Margolisian if and only if $A$ is
relatively injective with respect to the
abelian category\/ $\Mod_A^{\mathrm{f.d.}}$
of all finite dimensional $A$-modules.
\end{prop}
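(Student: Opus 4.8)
The plan is to read relative injectivity cohomologically and then reduce from arbitrary finite dimensional modules to simple ones. First I would record that, since every non-trivial finite dimensional module is non-coherent by Proposition~\ref{prop:fd-noncoherent}, the identification $\Ext^n_{A}(M,A)\iso{\lim_{(\Lambda,\preccurlyeq)}}^{\!n}\Hom_{A(\lambda)}(M,A)$ established in the preceding discussion (the collapse of the spectral sequence~\eqref{eq:SS}, using that $A$ is injective over each $A(\lambda)$) holds for \emph{every} finite dimensional $A$-module $M$. I would also note, exactly as in the coherent case treated in Lemma~\ref{lem:Ainj-fp} and Remark~\ref{rem:Coherent-Ext}, that relative injectivity of $A$ with respect to $\Mod_A^{\mathrm{f.d.}}$ is equivalent to the vanishing $\Ext^s_A(M,A)=0$ for all finite dimensional $M$ and all $s>0$: a finitely generated free module is $\Hom_A(-,A)$-acyclic, so these absolute $\Ext$ groups compute the relative derived functors, and $A$ is relatively injective precisely when they vanish in positive degrees.

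Granting this, the proposition becomes the assertion that the vanishing $\Ext^{s}_A(M,A)=0$ ($s>0$) for all finite dimensional $M$ is equivalent to the same vanishing restricted to finite dimensional simple $M=S$; the latter is the definition of Margolisian (Definition~\ref{defn:S-good}). One implication is immediate, since simple modules are finite dimensional modules. For the converse I would argue by induction on the length of a composition series of a finite dimensional module $M$. The base case is a simple module, covered by the Margolisian hypothesis. For the inductive step, choose a short exact sequence $0\to M'\to M\to S\to0$ with $S$ simple and $M'$ of smaller length; the long exact sequence for $\Ext_A(-,A)$ reads
\[
\cdots\to\Ext^s_A(S,A)\to\Ext^s_A(M,A)\to\Ext^s_A(M',A)\to\cdots,
\]
and the two outer terms vanish for $s>0$ by the hypothesis and the inductive hypothesis respectively, forcing $\Ext^s_A(M,A)=0$. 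This gives relative injectivity.

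The main obstacle is the bridge asserted in the first paragraph, namely that relative injectivity with respect to $\Mod_A^{\mathrm{f.d.}}$ really is detected by the absolute groups $\Ext^*_A(-,A)$. The lifting property for monomorphisms of finite dimensional modules only manifestly controls the degree-one connecting maps coming from finite dimensional short exact sequences, whereas $A$ itself is infinite dimensional, so an extension of a finite dimensional module by $A$ need not arise from any finite dimensional extension. I expect to resolve this in the same way as the coherent case: present $\Ext^*_A(-,A)$ as the relative derived functors attached to the restriction adjunctions $\Mod_A\rightleftarrows\Mod_{A(\lambda)}$ (under which $A$ is genuinely injective, by Proposition~\ref{prop:set-up}(a)), identify the relative resolution with one by $\Hom_A(-,A)$-acyclic free modules via Lemma~\ref{lem:Ainj-fp}, and thereby equate relative injectivity with positive-degree vanishing of the absolute $\Ext$. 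Once this identification is in place, together with the derived-limit description~\eqref{eq:SS}, the composition-series reduction above completes the proof.
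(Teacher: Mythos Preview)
Your composition-series reduction from arbitrary finite dimensional modules to simple ones is correct and is exactly the content the paper has in mind: the paper offers no separate proof, merely stating that the proposition ``summarises'' the preceding discussion, and the long exact sequence argument you give is the standard way to pass from $\Ext^s_A(S,A)=0$ for simple $S$ to $\Ext^s_A(M,A)=0$ for all finite dimensional $M$.

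Where your proposal goes astray is in the ``main obstacle'' paragraph. You interpret ``relatively injective with respect to $\Mod_A^{\mathrm{f.d.}}$'' as the lifting property for monomorphisms between finite dimensional modules, and then worry about bridging this to the absolute $\Ext$ groups. But that interpretation cannot be what the paper intends, for a reason you have the tools to see but did not use: by Corollary~\ref{cor:fd->Coherent}, $\Hom_A(U,A)=0$ for \emph{every} finite dimensional $U$, so the lifting property against finite dimensional monomorphisms is satisfied vacuously and unconditionally. Under your reading the proposition would force every locally Frobenius algebra to be Margolisian, contradicting the paper's explicit remark immediately afterwards that this is open. The paper is instead using ``relatively injective'' in the same loose sense as in Lemma~\ref{lem:Ainj-fp}, namely as shorthand for the vanishing $\Ext^s_A(M,A)=0$ for all $M$ in the given subcategory and all $s>0$. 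Once you adopt that reading, there is no bridge to build: the proposition is literally the equivalence between this vanishing for all finite dimensional $M$ and for simple $M$, which your induction establishes.

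A smaller point: your sentence ``a finitely generated free module is $\Hom_A(-,A)$-acyclic, so these absolute $\Ext$ groups compute the relative derived functors'' has the variance wrong and is in any case circular. What Remark~\ref{rem:fd-Coherent-Ext} actually uses is that \emph{if} $A$ is Margolisian then finitely generated free modules are $\Hom_A(M,-)$-acyclic for finite dimensional $M$; this is a consequence, not an input.
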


Margolis~\cite{HRM:Book} shows that every
$P$-algebra is $\k$-Margolisian in our sense.
In this case~$\k$ is the only simple module.
His proof makes essential use of the fact
that its modules are graded and we have
neither been able to find an argument that
works in our situation nor a counter example,
so it seems possible that every locally
Frobenius algebra is Margolisian, or at
least $\k$-Margolisian.

\begin{rem}\label{rem:fd-Coherent-Ext}
As in Remark~\ref{rem:Coherent-Ext}, for a
\fd. $A$-module~$M$, the left exact functor
$\Hom_A(M,-)$ on the category of all $A$-modules
has the right derived functors $\Ext_A^*(M,-)$.
If $A$ is Margolisian then for each finitely
generated free module~$F$, $\Ext_A^*(M,F) = 0$
and~$F$ is $\Hom_A(M,-)$-acyclic, so these right
derived functors can be computed using resolutions
by such modules which are the injectives in
$\Mod_A^{\mathrm{coh}}$. So they are also right
derived functors on $\Mod_A^{\mathrm{coh}}$.
But then $\Ext_A^*(M,N)=0$ for every coherent
module~$N$.
\end{rem}

We now discuss the finite dual of a locally
Frobenius $\k$-algebra~$A$ and some related
ideas. As we could not find a convenient
reference we give details which are probably
well known.

Recall that the \emph{finite dual} of~$A$ is
\[
A^\o =
\{ f\in\Hom_\k(A,\k) :
\text{$\exists I\lhd A$ s.t. $I$ is cofinite
and $I\subseteq\ker f$} \}
\subseteq\Hom_\k(A,\k).
\]
Here $I$ is \emph{cofinite} if $\dim_\k A/I<\infty$.
It is clear that~$A^\o\subseteq\Hom_\k(A,\k)$ is
a $\k$-subspace.

There are two $A$-module structures on~$\Hom_\k(A,\k)$
namely a left one induced by right multiplication
on the domain and a right one induced by left
multiplication on the domain.
\begin{lem}\label{lem:finitedualmodule}
The left and right $A$-module structures on~$A^\o$
each restrict to make~$A^\o$ a submodule of~$\Hom_\k(A,\k)$.

Furthermore, if $M$ is a \fd. $A$-module the image
of every $A$-module homomorphism $M\to\Hom_\k(A,\k)$
is contained in~$A^\o$.
\end{lem}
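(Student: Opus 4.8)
The plan is to treat the two assertions separately, in both cases exploiting the fact that membership in $A^\o$ is witnessed by a \emph{two-sided} cofinite ideal. Nothing here uses the locally Frobenius structure, so I would phrase everything for an arbitrary $\k$-algebra.

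For the submodule claim, I would take $f\in A^\o$ together with a cofinite ideal $I\lhd A$ satisfying $I\subseteq\ker f$, and check that this same $I$ witnesses membership of both $a\cdot f$ and $f\cdot a$ in $A^\o$. With the stated conventions $(a\cdot f)(x)=f(xa)$ and $(f\cdot a)(x)=f(ax)$, for $x\in I$ one has $xa\in I$ and $ax\in I$ precisely because $I$ is simultaneously a right and a left ideal; hence $(a\cdot f)(x)=f(xa)=0$ and $(f\cdot a)(x)=f(ax)=0$. Thus $I\subseteq\ker(a\cdot f)\cap\ker(f\cdot a)$, and since $I$ is still cofinite, both $a\cdot f$ and $f\cdot a$ lie in $A^\o$. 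This shows $A^\o$ is closed under each action.

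For the second claim I would argue for left modules, the right case being identical after swapping sides. Given a \fd. left module $M$ and a left $A$-module map $\phi\:M\to\Hom_\k(A,\k)$, the central object is the annihilator $\ann_A(M)$, which is a two-sided ideal; because $M$ is finite dimensional the quotient $A/\ann_A(M)$ embeds in $\End_\k(M)$ and is therefore finite dimensional, so $\ann_A(M)$ is cofinite. It then suffices to show $\ann_A(M)\subseteq\ker\phi(m)$ for every $m\in M$. This follows from the homomorphism property: for $a\in\ann_A(M)$ we have $a\cdot\phi(m)=\phi(am)=\phi(0)=0$ in $\Hom_\k(A,\k)$, and evaluating at the unit gives $\phi(m)(a)=(a\cdot\phi(m))(1)=0$. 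Hence $\phi(m)$ vanishes on the cofinite ideal $\ann_A(M)$, so $\phi(m)\in A^\o$, and therefore $\im\phi\subseteq A^\o$.

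I do not expect a genuine obstacle: the content is bookkeeping. The only points needing care are matching the left/right action conventions against the two-sided nature of the witnessing ideals, and recognising that finite-dimensionality of $M$ is exactly what makes $\ann_A(M)$ cofinite (the argument collapses without it, since for infinite-dimensional $M$ the annihilator need not be cofinite). Both are quick to verify.
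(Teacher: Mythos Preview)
Your proposal is correct and follows essentially the same route as the paper: for the first part you use that a two-sided cofinite ideal witnessing $f\in A^\o$ also witnesses $a\cdot f$ and $f\cdot a$, and for the second part your annihilator $\ann_A(M)$ is exactly the kernel of the algebra map $A\to\End_\k(M)$ that the paper invokes. Your write-up is slightly more explicit (treating both actions and spelling out the evaluation-at-$1$ step), but the underlying argument is the same.
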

\begin{proof}
We give the argument for the left module structure.

Let $f\in A^0$ be trivial on a cofinite ideal~$I\lhd A$.
If $a\in A$ and $x\in I$ then
\[
(af)(x) = f(xa) = 0
\]
since $xa\in I$.

Since $M$ is \fd., the associated algebra homomorphism
$A\to\End_\k(M)$ has a cofinite kernel $J\lhd A$ say.
Now for an $A$-module homomorphism $M\to\Hom_\k(A,\k)$,
the image of each $m\in M$ must vanish on~$J$.
\end{proof}

If $W$ is a \fd. vector space then we can similarly
define
\[
\Hom_\k^{\mathrm{fin}}(A,W) =
\{ f\in\Hom_\k(A,W) :
\text{$\exists I\lhd A$ s.t. $I$ is cofinite and
$I\subseteq\ker f$} \}
\subseteq\Hom_\k(A,W).
\]
Then the obvious left and right $A$-module structures
on $\Hom_\k(A,W)$ also restrict to make
$\Hom_\k^{\mathrm{fin}}(A,W)$ a left or right submodule.
If we choose a basis for~$W$ we obtain isomorphisms
of left and right $A$-modules
\[
\Hom_\k^{\mathrm{fin}}(A,W) \iso (A^\o)^{\dim_\k W}.
\]
An analogue of this is also true when~$W$ is infinite
dimensional.

It is standard that injectives in the module category
$\Mod_A$ are summands of modules of the form $\Hom_\k(A,W)$.
In particular every $A$-module~$M$ admits an embedding
$M\hookrightarrow\Hom_\k(A,M)$ where the copy of~$M$
in the codomain is viewed just as a vector space.
\begin{lem}\label{lem:findual-relinj}
Suppose that~$W$ is a \fd. vector space. Then\/
$\Hom_\k^{\mathrm{fin}}(A,W)$ is a relative injective
with respect to the category of \fd. $A$-modules.
Furthermore every \fd. $A$-module admits a resolution
by such relative injectives.
\end{lem}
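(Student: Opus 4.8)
The plan is to exploit two facts already in hand: that $\Hom_\k(A,W)$ is injective in $\Mod_A$ (as recalled just before the statement), and the second part of Lemma~\ref{lem:finitedualmodule}, which says that any homomorphism from a finite dimensional module into $\Hom_\k(A,W)$ automatically has image in $\Hom_\k^{\mathrm{fin}}(A,W)$. The latter I will use in the form: for a homomorphism $g\:V\to\Hom_\k(A,W)$ with $V$ finite dimensional, the relation $g(v)(x)=g(xv)(1)$ shows that $g(v)$ kills the cofinite ideal $\ann_A(V)$, so $g(v)\in\Hom_\k^{\mathrm{fin}}(A,W)$; crucially this computation never uses finiteness of $W$.

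For relative injectivity, given an inclusion $U\hookrightarrow V$ of finite dimensional $A$-modules and a map $f\:U\to\Hom_\k^{\mathrm{fin}}(A,W)$, I first regard $f$ as a map into the injective module $\Hom_\k(A,W)$ and extend it to $g\:V\to\Hom_\k(A,W)$. Since $V$ is finite dimensional the observation above gives $g(V)\subseteq\Hom_\k^{\mathrm{fin}}(A,W)$, so $g$ is the required extension. Because the argument is insensitive to the dimension of $W$, I will record that $\Hom_\k^{\mathrm{fin}}(A,W)$ is relatively injective for \emph{every} vector space $W$, which is what makes the resolution below possible.

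To construct the resolution I iterate the coinduction embedding $N\to\Hom_\k(A,N)$, $n\mapsto(a\mapsto an)$. The key point is that each module $\Hom_\k^{\mathrm{fin}}(A,W)$ is \emph{locally finite}: any $f$ in it vanishes on a cofinite ideal $I$, so $Af\subseteq\Hom_\k(A/I,W)$ is finite dimensional, and local finiteness is inherited by submodules and quotients. Thus starting from $M$ I embed $M\hookrightarrow J^0=\Hom_\k^{\mathrm{fin}}(A,M)$ (the image lands in the finite part since $M$ is finite dimensional), take the cokernel $C^0=J^0/M$, which is locally finite though now infinite dimensional, and embed it into $J^1=\Hom_\k^{\mathrm{fin}}(A,C^0)$. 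That this embedding lands in the finite part again uses local finiteness: for $n\in C^0$ the functional $a\mapsto an$ kills the cofinite ideal $\ann_A(An)$, since $An$ is finite dimensional. Splicing the short exact sequences $0\to M\to J^0\to C^0\to0$, $0\to C^0\to J^1\to C^1\to0$, and so on, produces an exact complex $0\to M\to J^0\to J^1\to\cdots$ whose terms are relative injectives of the required form; as every short exact sequence of vector spaces splits, the resolution is automatically allowable.

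The main obstacle is that the construction leaves $\Mod_A^{\mathrm{f.d.}}$ after the very first step, so one cannot simply keep embedding finite dimensional cokernels and quote injectivity on finite dimensional targets. I get around this by working in the larger category of locally finite modules and checking, via the single annihilator computation above, both that the coinduction embedding of any locally finite module lands in $\Hom_\k^{\mathrm{fin}}(A,-)$ (so there are enough objects of the right shape) and that these objects are relatively injective for arbitrary $W$ (so they genuinely serve as the relative injectives in the resolution).
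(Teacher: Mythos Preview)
Your argument for relative injectivity is exactly the paper's: extend through the genuine injective $\Hom_\k(A,W)$ and invoke Lemma~\ref{lem:finitedualmodule} to see the extension lands in the finite part; the paper reduces to $W=\k$ first, but that is cosmetic.

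For the resolution the paper's proof block says nothing beyond the diagram for injectivity, so you have supplied the missing argument. Your approach agrees with what the paper sketches \emph{after} the lemma: the resolution cannot stay inside $\Mod_A^{\mathrm{f.d.}}$ and instead lives in the locally finite category. Your two key observations---that relative injectivity of $\Hom_\k^{\mathrm{fin}}(A,W)$ holds for \emph{arbitrary} $W$ (via the annihilator calculation $g(v)(x)=g(xv)(1_A)$), and that each $\Hom_\k^{\mathrm{fin}}(A,W)$ is locally finite so the iterated cokernels remain locally finite---are exactly what is needed and are consistent with the paper's remarks surrounding the lemma (it notes just before that $\Hom_\k^{\mathrm{fin}}(A,W)\iso(A^\o)^{\dim_\k W}$ with an analogue for infinite~$W$). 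One small point: strictly read, ``such relative injectives'' in the statement would mean $W$ finite dimensional, and your terms $J^i=\Hom_\k^{\mathrm{fin}}(A,C^{i-1})$ have infinite dimensional $C^{i-1}$ for $i\geq1$; but since $\Hom_\k^{\mathrm{fin}}(A,W)$ is a coproduct of copies of~$A^\o$ and coproducts of relative injectives for $\Mod_A^{\mathrm{f.d.}}$ are again relative injective (any map from a \fd. module meets only finitely many summands), this is harmless, and the paper itself does not enforce the restriction.
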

\begin{proof}
It suffices to show this for $A^\o$ itself.

Suppose we have the solid diagram of $A$-modules
\[
\xymatrix{
0\ar[r] & U\ar[r]\ar[d] & V\ar@{-->}[ld]\ar@/^15pt/@{..>}[ldd] \\
& A^\o\ar@{^{(}->}[d] &  \\
& \Hom_\k(A,\k) &
}
\]
where $U$ and $V$ are \fd. and the row is exact.
Because $\Hom_\k(A,\k)$ is a genuine injective
$A$-module there is a dotted arrow making the
diagram commute. By the second part of
Lemma~\ref{lem:finitedualmodule} its image is
contained in~$A^\o$ so we obtain a dashed arrow
making the diagram commute.
\end{proof}

This means that to calculate $\Ext_A^*(M,N)$ where
$M$ and $N$ are \fd., we can use a resolution of
$N$ by $A$-modules of the form
$\Hom_\k^{\mathrm{fin}}(A,W)$. Of course this is
not a resolution in $\Mod_A^{\mathrm{\fd.}}$, but
it is in the category of \emph{locally finite}
$A$-modules $\Mod_A^{\mathrm{\lf.}}$ which is a
full abelian subcategory of~$\Mod_A$.

\section{Stable module categories for locally
Frobenius algebras}\label{sec:StabModCat}


For an arbitrary ring there are two distinct ways
to define a stable module category starting with
the category of modules, namely by treating as
trivial those homomorphisms which factor through
either projectives or injectives; for a Frobenius
algebra these coincide. A locally Frobenius
algebra~$A$ is not self-injective, but it is
injective relative to certain types of $A$-modules,
in particular coherent modules and \fr. modules.
%
So starting with the abelian category of coherent
$A$-modules we obtain a stable module category
using either approach. However, there are grounds
for thinking that \fr. or pseudo-coherent modules
should also be included although neither form an
abelian category in an obvious way.

We adopt some standard notions for stable module
categories for the stable module category of
coherent $A$-modules which we will denote by
$\Stmod^{\mathrm{coh}}_A$. We will also use the
notation
\[
\{M,N\} = \Stmod^{\mathrm{coh}}_A(M,N)
= \Mod^{\mathrm{coh}}_A(M,N)/\approx
\]
for the morphisms, where for two homomorphisms
$f,g\:M\to N$, $f\approx g$ if and only if there
is a factorisation
\[
\xymatrix{
M\ar[rr]^{f-g}\ar[dr] && N \\
& F\ar[ur] &
}
\]
where $F$ is a \fg. free module. For a coherent
module~$M$, the objects $\Omega M$ and $\mho M$
are defined by taking exact sequences
\[
P\xrightarrow{p}M \to 0,\quad 0\to M\xrightarrow{j}J
\]
where $P$ and $J$ are \fg. free and hence injective
modules. Using Schanuel's Lemma,~$\ker p$ and
$\coker j$ are well-defined up to isomorphism
in $\Stmod^{\mathrm{coh}}_A$ so we may use
these as representatives of~$\Omega M$ and~$\mho M$.

\begin{lem}\label{lem:Omega-mho}
In\/ $\Stmod^{\mathrm{coh}}_A$, every object~$M$
is isomorphic to objects of the form $\Omega M'$
and $\mho M''$ for some coherent modules~$M'$
and~$M''$. Then~$\Omega$ and~$\mho$ define
mutually inverse endofunctors of\/
$\Stmod^{\mathrm{coh}}_A$.
\end{lem}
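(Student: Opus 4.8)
The plan is to show that $\Omega$ and $\mho$ are well-defined functors on $\Stmod^{\mathrm{coh}}_A$, that every object arises in both forms, and that the two functors are mutually inverse. The key structural fact I would exploit throughout is Corollary~\ref{cor:Frob-Inj=Proj}'s analogue in our setting: in $\Mod^{\mathrm{coh}}_A$ the finitely generated free (equivalently projective) modules are precisely the relative injectives (Lemma~\ref{lem:Proj&Inj}), so the defining exact sequences $P\xrightarrow{p}M\to0$ and $0\to M\xrightarrow{j}J$ use modules $P,J$ that are simultaneously projective and injective. This symmetry is what makes $\Omega$ and $\mho$ inverse to each other rather than merely adjoint.

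First I would verify that $\Omega$ and $\mho$ are functorial on $\Stmod^{\mathrm{coh}}_A$. For a morphism $f\:M\to N$, choose free covers $P\xrightarrow{p}M$ and $Q\xrightarrow{q}N$; by projectivity of $P$ the map $f\circ p$ lifts through $q$, inducing a map $\ker p\to\ker q$, hence a map $\Omega M\to\Omega N$. Well-definedness up to $\approx$ follows from the standard comparison argument (any two lifts differ by a map factoring through a free module), and independence of the choice of $P$ follows from Schanuel's Lemma~\ref{lem:Schanuel}, exactly as indicated in the excerpt's setup. The argument for $\mho$ is dual, using injectivity of $J$ in $\Mod^{\mathrm{coh}}_A$ (Lemma~\ref{lem:Ainj-fp}) to extend maps.

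Next, for the existence claim I would observe that every coherent $M$ embeds into a finitely generated free module $F$ by Proposition~\ref{prop:fr-embedding}(a). Setting $M''=\coker(M\hookrightarrow F)$ gives a short exact sequence $0\to M\to F\to M''\to0$ with $F$ free, which exhibits $M\iso\Omega M''$ in $\Stmod^{\mathrm{coh}}_A$ (the kernel in the defining sequence for $\Omega M''$ is $M$ up to the usual Schanuel ambiguity). Dually, a free cover $P\twoheadrightarrow M$ with kernel $M'$ yields $0\to M'\to P\to M\to0$, showing $M\iso\mho M'$. Here I must check that $M'$ and $M''$ are themselves coherent: $M'$ is finitely generated (it is the kernel of a map between finitely presented modules over a coherent ring, hence finitely presented by Corollary~\ref{cor:Schanuel} and coherence) and $M''$ is coherent as a cokernel in the abelian category $\Mod^{\mathrm{coh}}_A$ (Theorem~\ref{thm:fp-AbCat}).

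Finally, to see $\Omega$ and $\mho$ are mutually inverse, I would take the defining sequence $0\to\Omega M\to P\to M\to0$ with $P$ free and apply $\mho$ to it: since $P$ is also \emph{injective} in $\Mod^{\mathrm{coh}}_A$, this same short exact sequence computes $\mho(\Omega M)$, because the cokernel of the injective embedding $\Omega M\hookrightarrow P$ is exactly $M$. Thus $\mho\Omega M\iso M$ in $\Stmod^{\mathrm{coh}}_A$, and symmetrically $\Omega\mho M\iso M$ using the dual sequence. \textbf{The main obstacle} I anticipate is the bookkeeping of naturality: one must check these isomorphisms $\mho\Omega M\iso M$ assemble into a natural transformation, which requires verifying that the comparison maps built from lifting/extending along free-injective modules commute with the structure maps up to the relation $\approx$. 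This is where the projective-injective coincidence of finitely generated free modules does the essential work, collapsing what would be separate adjunction data into a genuine inverse equivalence; the verification is routine diagram-chasing modulo $\approx$, but it is the step that genuinely uses the locally Frobenius hypothesis rather than formal properties of any abelian category.
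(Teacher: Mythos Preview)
Your proposal is correct and follows essentially the same route as the paper: embed $M$ into a \fg. free module via Proposition~\ref{prop:fr-embedding}(a) to realise it as an $\Omega$, cover it by a \fg. free module to realise it as a $\mho$, and then use that \fg. free modules are simultaneously projective and injective in $\Mod^{\mathrm{coh}}_A$ so that the single short exact sequence $0\to\Omega M\to P\to M\to0$ computes both $\Omega M$ and $\mho(\Omega M)=M$. The paper's proof is terser---it omits the functoriality and naturality checks you outline and does not spell out why the kernel and cokernel remain coherent---but the substantive argument is identical; note only that you have interchanged the names $M'$ and $M''$ relative to the statement.
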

\begin{proof}
By Proposition~\ref{prop:fr-embedding}(a)
we know that $M$ is a submodule of a \fg.
free module $F$ so there is an exact sequence
\[
0\to M\to F\to M'\to 0
\]
showing that $M=\Omega M'$. On the other
hand there is also an epimorphism $F'\to M$
for some \fg. free module~$F'$ and by
Lemma~\ref{lem:Ainj-fp} this is injective,
hence there is an exact sequence
\[
0\to M''\to F'\to M\to 0
\]
and therefore $M=\mho M''$. This also shows
that $M''\iso\Omega M$ and so $M\iso\mho\Omega M$.
A similar argument shows that $M\iso\Omega\mho M$.
\end{proof}

\begin{prop}\label{prop:Omega-mho}
The functors
$\Omega,\mho\:\Stmod_A^{\mathrm{coh}}\to\Stmod_A^{\mathrm{coh}}$
satisfy the following for all coherent $A$-modules
$M$ and $N$ and these isomorphisms are natural:
\[
\{\Omega M,\Omega N\} \iso \{M,N\}
\iso \{\mho M,\mho N\}.
\]
\end{prop}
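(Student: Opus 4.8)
The plan is to deduce the whole statement formally from Lemma~\ref{lem:Omega-mho}, which already presents $\Omega$ and $\mho$ as mutually inverse endofunctors of $\Stmod^{\mathrm{coh}}_A$; the only real task is to identify the claimed isomorphisms with the action of these functors on morphism sets. First I would spell out how $\Omega$ acts on a stable class. Fixing presentations $0\to\Omega M\xrightarrow{i}P\xrightarrow{p}M\to0$ and $0\to\Omega N\xrightarrow{j}Q\xrightarrow{q}N\to0$ with $P,Q$ finitely generated free, a representative $f\:M\to N$ of a class in $\{M,N\}$ lifts through the (relatively) projective module $P$ to some $\tilde f\:P\to Q$ with $q\tilde f=fp$, and $\tilde f$ restricts on kernels to $\Omega f\:\Omega M\to\Omega N$. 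Lemma~\ref{lem:Omega-mho} guarantees this is exactly the morphism part of the functor $\Omega$, so $[f]\mapsto[\Omega f]$ is a well-defined map $\Omega\:\{M,N\}\to\{\Omega M,\Omega N\}$; the map $\mho\:\{M,N\}\to\{\mho M,\mho N\}$ is described dually using embeddings into finitely generated free (hence injective) modules.

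Next I would invoke the categorical fact that an equivalence of categories is fully faithful. By Lemma~\ref{lem:Omega-mho}, $\Omega$ and $\mho$ are mutually inverse auto-equivalences of $\Stmod^{\mathrm{coh}}_A$, so each is fully faithful, i.e.\ induces a bijection on morphism sets; this gives both $\{M,N\}\iso\{\Omega M,\Omega N\}$ and $\{M,N\}\iso\{\mho M,\mho N\}$. If one prefers an explicit two-sided inverse, let $\eta\:\Id\xrightarrow{\iso}\mho\Omega$ be the natural isomorphism from the lemma; naturality of $\eta$ yields $\eta_N\circ f=(\mho\Omega f)\circ\eta_M$ for every $[f]\in\{M,N\}$, so that the composite
\[
\{M,N\}\xrightarrow{\,\Omega\,}\{\Omega M,\Omega N\}\xrightarrow{\,\mho\,}\{\mho\Omega M,\mho\Omega N\}\xrightarrow{\;\iso\;}\{M,N\},
\]
with the last arrow conjugation by $\eta_M,\eta_N$, is the identity, and the symmetric argument applied to the pair $(\Omega M,\Omega N)$ (using $\Omega\mho\iso\Id$) shows the reverse composite is the identity as well. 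Hence $\Omega$ is a bijection on hom-sets, and interchanging $\Omega$ and $\mho$ gives the $\mho$-version. Naturality of both isomorphisms in $M$ and $N$ is then automatic, since they are assembled entirely from the functors $\Omega,\mho$ and natural transformations.

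The one place where genuine module theory enters — and the step I expect to be the only obstacle worth checking — is the well-definedness of $\Omega$ and $\mho$ on stable classes: two lifts of the same $f$ differ by a map $P\to\Omega N\hookrightarrow Q$ factoring through $j$, and a stably trivial $f$ must yield a stably trivial $\Omega f$. Both facts rest on the relative injectivity and projectivity of finitely generated free modules (Lemma~\ref{lem:Ainj-fp} and Proposition~\ref{prop:fr-embedding}). Since precisely this bookkeeping is what makes $\Omega,\mho$ into functors on $\Stmod^{\mathrm{coh}}_A$ in the first place, it is already subsumed in Lemma~\ref{lem:Omega-mho}, and I anticipate no further difficulty: the proposition is in essence the formal statement that the auto-equivalences $\Omega$ and $\mho$ are fully faithful.
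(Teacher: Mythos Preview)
The paper states this proposition without proof; it is evidently regarded as a formal consequence of Lemma~\ref{lem:Omega-mho}, which already asserts that $\Omega$ and $\mho$ are mutually inverse endofunctors of $\Stmod_A^{\mathrm{coh}}$. Your argument is precisely the expected derivation: an auto-equivalence is fully faithful, hence bijective on morphism sets, and naturality is automatic. This is correct and there is nothing to add.
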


We extend $\{-,-\}$ to a graded $\Z$-bifunctor
by setting
\[
\{M,N\}^n = \{M,N\}_{-n} =
\begin{dcases*}
\{M,\mho^nN\} & if $n\geq0$, \\
\{\Omega^{-n}M,N\} & if $n<0$.
\end{dcases*}
\]
This leads to a triangulated structure on
$\Stmod_A^{\mathrm{coh}}$. Of course the
point of this definition is that when~$n>0$,
\[
\Ext_A^n(M,N) \iso \{M,N\}^n.
\]

A major defect of this stable module category
in the case when~$A$ is a locally Frobenius
Hopf algebra is its lack of an obvious monoidal
structure: the tensor product~$M\otimes_\k N$
of two coherent modules with the diagonal
action need not be coherent as an $A$-module.
This problem disappears if one of the factors
is instead a \fd. module. We will discuss
this further in Section~\ref{sec:LFHA}.

\section{Locally Frobenius Hopf algebras}
\label{sec:LFHA}

In this section we consider results on locally
Frobenius Hopf algebras where the Hopf structure
plays a r\^ole. We are particularly motivated
by the goal of extending results on locally
finite group algebras. A convenient source
for background material on Hopf algebras is
provided by Montgomery~\cite{SM:HopfAlgActions}.

\begin{assump}\label{assump:TensorProds}
We will assume that~$H$ is a Hopf algebra
over the field~$\k$ with coproduct
$\psi\:H\to H\otimes H$ and antipode
$\chi\:H\to H$, where will use the notation
for these:
\[
\psi(h) = \sum_i h'_i\otimes h_i'',
\quad
\bar{h} = \chi(h).
\]
We will also assume that $K\subseteq H$ is
a subHopf algebra with~$H$ being flat as
a left and right $K$-module.
\end{assump}

Now given two left $H$-modules $L$ and~$M$,
their tensor product~$L\otimes M$ is a left
$H$-module with the diagonal action given by
\[
h\cdot(\ell\otimes m)
= \sum_i h'_i\ell\otimes h''_im,
\]
where the coproduct on~$h$ is
\[
\psi h = \sum_i h'_i\otimes h''_i.
\]

In particular, given a left $H$-module~$M$ and
a left $K$-module~$N$, the tensor product of~$M$
and $H\otimes_K N$ is a left $H$-module
$M\otimes(H\otimes_K N)$. There is an isomorphism
of $H$-modules
\begin{equation}\label{eq:TwistingIso}
M\otimes(H\otimes_K N)
\xrightarrow[\iso]{\Theta}
H\otimes_K(M\otimes N);
\quad
m\otimes(h\otimes n)\mapsto
\sum_i h'_i\otimes(\bar{h''_i}m\otimes n)
\end{equation}
where $\bar{x}=\chi(x)$ and $M\otimes N$ is a left
$K$-module with the diagonal action.

A particular instance of this is
\begin{equation}\label{eq:H//K}
(H/\!/K)\otimes(H/\!/K) \iso H\otimes_K(H/\!/KB)
\end{equation}
where
\[
H/\!/K=H/HK^+\iso H\otimes_K\k.
\]

\begin{lem}\label{lem:TensorProd-extended}
Suppose that $H$ is coherent and $K$ is finite
dimensional. Let~$M$ be a left $H$-module and~$N$
a left $K$-module. Then the $H$-module
$M\otimes(H\otimes_KN)$ is pseudo-coherent.
\end{lem}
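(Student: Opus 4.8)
The plan is to reduce the statement to Lemma~\ref{lem:pc-extended} by means of the twisting isomorphism~\eqref{eq:TwistingIso}. That isomorphism identifies the diagonal $H$-module $M\otimes(H\otimes_KN)$ with the extended $H$-module $H\otimes_K(M\otimes N)$, where $M\otimes N$ carries the diagonal left $K$-action. Once this identification is in hand, pseudo-coherence follows formally from the earlier lemma on extended modules, so the real content of the proof is already packaged in~\eqref{eq:TwistingIso}.

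First I would invoke~\eqref{eq:TwistingIso} to obtain an isomorphism of $H$-modules
\[
M\otimes(H\otimes_KN)\iso H\otimes_K(M\otimes N).
\]
Since being pseudo-coherent is an isomorphism-invariant property, it suffices to prove that the right-hand side is pseudo-coherent.

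Next I would apply Lemma~\ref{lem:pc-extended} with $A=H$ and $B=K$. The hypotheses are exactly what is available: $H$ is coherent by assumption, $K$ is \fd. and hence Artinian, and $H$ is flat as a right $K$-module by Assumption~\ref{assump:TensorProds}. The $K$-module being extended is $M\otimes N$ with its diagonal action; as far as Lemma~\ref{lem:pc-extended} is concerned this is an arbitrary $K$-module, so the lemma applies to it unconditionally and yields that $H\otimes_K(M\otimes N)$ is pseudo-coherent. Transporting this conclusion back along the isomorphism above completes the argument.

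I do not anticipate a genuine obstacle here: the one conceptual step is recognising that the twisting isomorphism converts a diagonal tensor product into an extended module, after which the finite dimensionality of $K$ supplies the Artinian hypothesis and the flatness of $H$ over $K$ is built into Assumption~\ref{assump:TensorProds}. The only point worth verifying carefully is that the diagonal $K$-action on $M\otimes N$ used in~\eqref{eq:TwistingIso} is well-defined with no further hypotheses on $M$ or $N$, which it is.
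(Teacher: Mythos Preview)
Your proposal is correct and follows exactly the paper's own proof: apply the twisting isomorphism~\eqref{eq:TwistingIso} to rewrite $M\otimes(H\otimes_KN)$ as $H\otimes_K(M\otimes N)$, then invoke Lemma~\ref{lem:pc-extended}. Your version simply spells out the hypothesis-checking (Artinianness of~$K$, flatness from Assumption~\ref{assump:TensorProds}) that the paper leaves implicit.
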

\begin{proof}
Using \eqref{eq:TwistingIso},
\[
M\otimes(H\otimes_KN) \iso H\otimes_K(M\otimes N)
\]
which is pseudo-coherent by Lemma~\ref{lem:pc-extended}.
\end{proof}

In general, the tensor product of two coherent
modules over a locally Frobenius Hopf algebra
is not a coherent module. However, it turns out
that it is pseudo-coherent.
\begin{prop}\label{prop:TensorProd-extended}
Suppose that $H$ is a locally Frobenius Hopf
algebra and that $L$ and $M$ are two coherent
left $H$-modules. Then the $H$-module
$L\otimes M$ is pseudo-coherent.
\end{prop}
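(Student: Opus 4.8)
The plan is to reduce the statement to the extended-module situation already settled in Lemma~\ref{lem:TensorProd-extended}, using the fact that a coherent module over a locally Frobenius algebra is, up to isomorphism, induced from a finite-dimensional module over one of the defining subalgebras. Since $M$ is coherent, Proposition~\ref{prop:set-up}(c) supplies some $\lambda\in\Lambda$ and a finitely generated---hence finite-dimensional, as $H(\lambda)$ is finite-dimensional---left $H(\lambda)$-module $M'$ together with an isomorphism of $H$-modules $M\iso H\otimes_{H(\lambda)}M'$. Here $H(\lambda)$ is a finite-dimensional subHopf algebra over which $H$ is flat as a left and right module, so the hypotheses needed for the twisting machinery of this section are met with $K=H(\lambda)$.

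Now I substitute this presentation into the tensor product, taking care with the order of the factors. Because $H$ need not be cocommutative, the flip $L\otimes M\to M\otimes L$ is not in general an $H$-module map, so the induced factor must be placed in the correct slot: the twisting isomorphism~\eqref{eq:TwistingIso}, and hence Lemma~\ref{lem:TensorProd-extended}, is formulated with the induced module as the \emph{second} tensor factor. I therefore keep $L$ as the arbitrary left $H$-module and regard $M\iso H\otimes_{H(\lambda)}M'$ as the extended factor, obtaining $H$-module isomorphisms
\[
L\otimes M \iso L\otimes\bigl(H\otimes_{H(\lambda)}M'\bigr) \iso H\otimes_{H(\lambda)}(L\otimes M'),
\]
where the second isomorphism is the instance of~\eqref{eq:TwistingIso} in which the ambient $H$-module is $L$ and the $K$-module is $M'$, and $L\otimes M'$ carries the diagonal $H(\lambda)$-action. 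Invoking Lemma~\ref{lem:pc-extended} with the coherent ring $H$ and the Artinian subring $H(\lambda)$ (equivalently, quoting Lemma~\ref{lem:TensorProd-extended} outright) shows that this extended module is pseudo-coherent, and therefore so is $L\otimes M$.

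I do not anticipate a genuine obstacle, since all the real work has already been invested upstream, in establishing~\eqref{eq:TwistingIso} and in Lemma~\ref{lem:pc-extended}---whose proof takes a finitely generated submodule of an extended module, traps it inside $H\otimes_{H(\lambda)}U'$ for a finitely generated submodule $U'$ of the base, and then uses that $H(\lambda)$ is Noetherian (so $U'$ is finitely presented), flatness of $H$, and coherence of $H$ to conclude. The only points requiring attention are bookkeeping: confirming that $M'$ may be taken finite-dimensional, and respecting the asymmetry of the diagonal action so that the induced factor occupies the slot that~\eqref{eq:TwistingIso} demands. It is immaterial which of the two coherent factors one expresses as an induced module, since the remaining factor is permitted to be an arbitrary $H$-module in Lemma~\ref{lem:TensorProd-extended}.
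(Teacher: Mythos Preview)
Your proof is correct and follows essentially the same route as the paper: express one of the two coherent modules as induced from a finitely generated module over some $H(\lambda)$, apply the twisting isomorphism~\eqref{eq:TwistingIso}, and then invoke Lemma~\ref{lem:pc-extended}. The only cosmetic difference is that the paper writes $L\iso H\otimes_{H(\lambda)}L'$ and obtains $L\otimes M\iso H\otimes_{H(\lambda)}(L'\otimes M)$, whereas you induce up the second factor to match the stated form of~\eqref{eq:TwistingIso}; as you note, the choice is immaterial.
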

\begin{proof}
Every coherent $H$-module is induced from
a finitely generated $H(\lambda)$-module for
some~$\lambda$. By choosing a large enough~$\lambda$
we can assume that $L\iso H\otimes_{H(\lambda)}L'$
for some finitely generated $H(\lambda)$-module~$L'$.
Then as $H$-modules,
\[
L\otimes M \iso H\otimes_{H(\lambda)}(L'\otimes M),
\]
which is a pseudo-coherent $H$-module.
\end{proof}

If $H$ is a locally Frobenius Hopf algebra,
then for~$\lambda$, the cyclic $H$-module
\[
H/\!/H(\lambda)=H/HH(\lambda)^+\iso H\otimes_{H(\lambda)}\k
\]
can be viewed as an $H(\lambda)$-module, and
by~\eqref{eq:H//K} there is an isomorphism of
$H$-modules
\[
H/\!/H(\lambda)\otimes H/\!/H(\lambda)
\iso H\otimes_{H(\lambda)}H/\!/H(\lambda).
\]
So if $H/\!/H(\lambda)$ is a coproduct of \fg./\fd.
$H(\lambda)$-modules then
$H/\!/H(\lambda)\otimes H/\!/H(\lambda)$ is
$\lambda$-strongly pseudo-coherent $H$-module.

Now we consider \emph{normality} for subHopf
algebras of locally Frobenius Hopf algebras.
\begin{defn}\label{defn:AdjAction}
The left and right \emph{adjoint actions}
of $h\in H$ on $x\in H$ are given by
\[
h\cdot x = \sum_i h'_ix\bar{h_i''},
\quad
x\cdot h = \sum_i \bar{h'_i}xh_i''.
\]
\end{defn}

When $H=\k G$ is a group algebra then
if $g\in G$, the adjoint action agrees
with conjugation by~$g$.

The adjoint action of $H$ on itself is
compatible with the product $\phi\:H\otimes H\to H$
and coproduct $\psi\:H\to H\otimes H$
provided $H\otimes H$ is given the
diagonal coactions for which
\[
h\cdot(x\otimes y) =
\sum_{i}h'_i\cdot x\otimes h''_i\cdot y,
\quad
(x\otimes y)\cdot h =
\sum_{i}x\cdot h'_i\otimes y\cdot h''_i.
\]

In the following definition, the notion of
\emph{strongly normalised} is motivated by
the case of the group algebra of a locally
finite group~$G$. If a finite subgroup
$H\leq G$ normalises a subgroup~$K$, then
for any finite subgroup $K'\leq K$, the
finite set
\[
\bigcup_{h\in H}hK'h^{-1}\subseteq K
\]
is contained in some finite subgroup
$K'\leq K$. This has implications for the
image of the adjoint action of $\k H$ on
$\k K'$ which is contained in $\k K''$.
For a locally Frobenius Hopf algebra our
definition specialises to this case.

\begin{defn}\label{defn:NormalsubHopfAlg}
A sub(Hopf) algebra $K\subseteq H$ is
\emph{normal} if for all $h\in H$,
$h\cdot K\subseteq K$ and $K\cdot h\subseteq K$.

If $H'\subseteq H$ is a subHopf algebra,
then~$K$ is \emph{normalised by $H'$} if
for every $h\in H'$, $h\cdot K\subseteq K$
and $K\cdot h\subseteq K$.

If $K\subseteq H$ is a locally Frobenius
subHopf algebra of shape $\Lambda'\subseteq\Lambda$
then~$K$ is \emph{strongly normalised} by
a subHopf algebra $H'\subseteq H$ if for
all~$h\in H'$ and $\lambda\in\Lambda'$,
there is a $\lambda_h\in\Lambda'$ such that
\[
h\cdot K(\lambda)\subseteq K(\lambda_h),
\quad K(\lambda)\cdot h \subseteq K(\lambda_h).
\]
\end{defn}

Notice that in the strongly normalised case,
if $H'$ is \fd., then
\[
\sum_{h\in H'}h\cdot K(\lambda)
\]
is a \fd. subspace so is contained in
some~$K(\tilde{\lambda}_h)$.

Here is an omnibus proposition combining
results found in
Montgomery~\cite{SM:HopfAlgActions}*{section~3.4}
\begin{prop}\label{prop:NormalsubHopfAlg}
Let $K\subseteq H$ be a subHopf algebra. \\
\emph{(a)} Let $K$ be normal. Then $HK^+=K^+H$
and this is a Hopf ideal in~$H$; furthermore,
the quotient homomorphism $H\to H/HK^+$ is
a morphism of Hopf algebras. \\
\emph{(b)} If $H$ is faithfully flat as a
left/right $K$-module and $HK^+=K^+H$,
then~$K$ is normal. \\
\emph{(c)} If $H$ is finite dimensional
then~$K$ is normal if and only if $HK^+=K^+H$.
\end{prop}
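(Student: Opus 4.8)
The plan is to handle the three parts in order, treating (a) as a direct computation with the antipode and counit, (b) as the substantive statement where faithful flatness is essential, and (c) as a corollary of (b) via Nichols--Zoeller.

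For (a), the key identities are that for $h\in H$ and $k\in K$ one has $hk=\sum_i(h'_i\cdot k)\,h''_i$ and $kh=\sum_i h'_i\,(k\cdot h''_i)$, both obtained by expanding the adjoint actions of Definition~\ref{defn:AdjAction} and collapsing with the antipode and counit axioms. Since the counit is multiplicative, $\epsilon(h\cdot k)=\epsilon(h)\epsilon(k)$ and $\epsilon(k\cdot h)=\epsilon(k)\epsilon(h)$, so for $k\in K^+$ normality forces each $h'_i\cdot k$ and $k\cdot h''_i$ into $K^+$. The first identity then gives $HK^+\subseteq K^+H$ and the second $K^+H\subseteq HK^+$, so $HK^+=K^+H$. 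To see this common ideal is a Hopf ideal I would check three things: $\epsilon(HK^+)=0$ (immediate); that it is a coideal, using $\psi(k)\in k\otimes1+1\otimes k+K^+\otimes K^+$ for $k\in K^+$ (valid since $K$ is a subcoalgebra with $\epsilon(k)=0$) and distributing $\psi(h)$ over this; and $\chi(HK^+)\subseteq HK^+$, using $\chi(K^+)\subseteq K^+$ together with $\chi(hk)=\chi(k)\chi(h)\in K^+H$. The projection $H\to H/HK^+$ is then a Hopf morphism by construction.

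For (b), I would first note that the coideal and antipode computations in (a) never used normality, so the hypothesis $HK^+=K^+H$ alone already makes $HK^+$ a Hopf ideal; write $\bar{H}=H/HK^+$, let $\pi\:H\to\bar{H}$ be the projection, and form the right $\bar{H}$-comodule structure $\rho=(\id\otimes\pi)\psi$ on $H$. The easy inclusion $K\subseteq H^{\mathrm{co}\,\bar{H}}$ holds since $\pi(K^+)=0$ forces $\rho(k)=k\otimes1$. The crucial input is the reverse inclusion: by a faithfully flat descent argument (see~\cite{SM:HopfAlgActions}*{section~3.4}), the hypothesis that $H$ is faithfully flat as a left and right $K$-module yields $K=H^{\mathrm{co}\,\bar{H}}$ and, symmetrically, $K={}^{\mathrm{co}\,\bar{H}}H$. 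Granting this, normality is a clean calculation: for $k\in K$ I would expand $\rho(h\cdot k)$, recognise it as the image of $\sum_i k'_i\otimes\pi(k''_i)$ under a fixed linear endomorphism of $H\otimes\bar{H}$ assembled from $\psi(h)$, substitute the coinvariance relation $\sum_i k'_i\otimes\pi(k''_i)=k\otimes1$, and collapse the remaining antipode terms via the antipode and counit axioms to reach $\rho(h\cdot k)=(h\cdot k)\otimes1$; hence $h\cdot k\in H^{\mathrm{co}\,\bar{H}}=K$, and the mirror computation with left coinvariants gives $k\cdot h\in K$. I expect the descent isomorphism $K=H^{\mathrm{co}\,\bar{H}}$ to be the main obstacle: it is the one genuinely deep step and the only place faithful flatness is indispensable, everything around it being formal Hopf-algebra bookkeeping.

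For (c), the forward direction is exactly (a). Conversely, if $H$ is finite dimensional then the Nichols--Zoeller theorem~\cite{WDN&MBZ:HAfreeness} makes $H$ free, hence faithfully flat, as both a left and a right $K$-module, so the hypotheses of (b) are satisfied and $HK^+=K^+H$ forces $K$ to be normal.
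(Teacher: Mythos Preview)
The paper does not actually prove this proposition: it is stated as an ``omnibus proposition combining results found in Montgomery~\cite{SM:HopfAlgActions}*{section~3.4}'' and no argument is given. Your proposal is correct and is essentially the standard argument one finds in that reference. The identities $hk=\sum_i(h'_i\cdot k)h''_i$ and their mirrors give~(a); the descent step $K=H^{\mathrm{co}\,\bar H}$ under faithful flatness is exactly the substantive input from~\cite{SM:HopfAlgActions}*{section~3.4} (originally due to Takeuchi/Schneider) and your coinvariance computation for $h\cdot k$ goes through as written; and~(c) follows from~(b) via Nichols--Zoeller just as you say. One small wording point: in~(b) you assert that the coideal computation never used normality, and indeed the decomposition $\psi(k)\in k\otimes1+1\otimes k+K^+\otimes K^+$ together with the hypothesis $HK^+=K^+H$ suffices for the Hopf ideal property, so the quotient $\bar H$ is available --- but it is worth making explicit that the antipode stability $\chi(HK^+)\subseteq K^+H=HK^+$ uses the hypothesis and not just that $K$ is a subHopf algebra.
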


The next result is probably standard but
we do not know a convenient reference.
\begin{prop}\label{prop:semidirectprodHA}
Suppose that $H$ is a Hopf algebra over a
field and that $K,L$ are subHopf algebras
where~$K$ is normalised by~$L$. Then
$KL=LK$ and this is a subHopf algebra
of~$H$.
\end{prop}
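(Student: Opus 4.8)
The plan is to first prove the subspace identity $KL=LK$ by means of two standard ``straightening'' formulas relating ordinary multiplication to the adjoint actions of Definition~\ref{defn:AdjAction}, and then to check that the three closure conditions defining a subHopf algebra follow formally once $KL=LK$ is known.

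First I would record, for all $h,x\in H$, the identities
\[
hx = \sum_i (h'_i\cdot x)\,h''_i,
\qquad
xh = \sum_i h'_i\,(x\cdot h''_i),
\]
both of which are immediate consequences of the counit and antipode axioms applied to the coproduct of~$h$ (they are the usual expressions of $hx$ and $xh$ through the left and right adjoint actions). Now suppose $h\in L$ and $x\in K$. Because $L$ is a subHopf algebra it is in particular a subcoalgebra, so $\psi(h)=\sum_i h'_i\otimes h''_i\in L\otimes L$; and because $K$ is normalised by~$L$ we have $h'_i\cdot x\in K$. The first identity then exhibits $hx$ as an element of the image of $L\otimes K\otimes L$ under $a\otimes b\otimes c\mapsto(a\cdot b)c$, which lies in $KL$; hence $LK\subseteq KL$. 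Symmetrically, the second identity, together with $x\cdot h''_i\in K$, shows $xh\in LK$, so $KL\subseteq LK$. Therefore $KL=LK$.

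Granting this, the remaining verifications are one-liners. For the subalgebra property, $(KL)(KL)=K(LK)L=K(KL)L=(KK)(LL)\subseteq KL$, and $1\in KL$ since $1$ lies in both $K$ and $L$. For the subcoalgebra property, $\psi$ is an algebra homomorphism and $K,L$ are subcoalgebras, so $\psi(KL)=\psi(K)\psi(L)\subseteq(K\otimes K)(L\otimes L)=KL\otimes KL$. Finally, the antipode $\chi$ is an algebra anti-homomorphism stabilising each of $K$ and $L$, so $\chi(KL)=\chi(L)\chi(K)\subseteq LK=KL$, and the counit restricts without comment. Thus $KL$ is a unital subalgebra and subcoalgebra closed under the antipode, \ie\ a subHopf algebra.

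The only genuine content is the passage $KL=LK$, and within it the point to be careful about is that the normalisation hypothesis is assumed only for adjoint actions by honest elements of~$L$; it is precisely the fact that $\psi(L)\subseteq L\otimes L$ that keeps both tensor factors $h'_i,h''_i$ inside~$L$, allowing the hypothesis to be applied to $h'_i\cdot x$ (resp.\ $x\cdot h''_i$). Everything downstream of $KL=LK$ is formal, so I expect no further obstacle.
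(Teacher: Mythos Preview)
Your proposal is correct and follows essentially the same route as the paper: the paper's proof also uses the straightening identity $\ell k=\sum_i(\ell'_i\cdot k)\,\ell''_i$ (obtained from the counit axiom and coassociativity) to get $LK\subseteq KL$, and then says ``Similarly, $KL\subseteq LK$''. The paper leaves the remaining subHopf algebra verifications implicit by pointing to the analogous Proposition~\ref{prop:NormalsubHopfAlg}(a), whereas you spell them out; your version is simply more detailed.
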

\begin{proof}
This is similar to the proof of
Proposition~\ref{prop:NormalsubHopfAlg}(a).
The key point is that for $k\in K$ and
$\ell\in L$, using the definition of
the counit and coassociativity we
have,
\begin{align*}
\ell k
&= \sum_i \ell'_i k \epsilon(\ell''_i) \\
&= \sum_i \ell'_i\cdot k \ell''_i \in KL
\end{align*}
so $LK\subseteq KL$. Similarly, $KL\subseteq LK$.
\end{proof}
\begin{prop}\label{prop:normalsubFrobHA}
Let~$H$ be a locally Frobenius Hopf algebra
of shape~$\Lambda$ and let $K\subseteq H$
be a locally Frobenius subHopf algebra if
shape $\Lambda'\subseteq\Lambda$. Suppose
that $L\subseteq H$ is a finite dimensional
subHopf algebra and~$K$ is normalised by~$L$.
Then for any finite dimensional subHopf
algebra $K'\subseteq K$ there is a
$\lambda\in\Lambda$ such that $H(\lambda)$
contains~$K'$ and~$L$; furthermore
$K\cap H(\lambda)$ is normalised by~$L$ and
$L(K\cap H(\lambda))=(K\cap H(\lambda))L\subseteq H(\lambda)$
is a finite dimensional subHopf algebra
containing~$K'$ and~$L$.
\end{prop}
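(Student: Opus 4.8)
The plan is to choose a single index $\lambda$ large enough to absorb the two finite-dimensional pieces $K'$ and $L$, and then to verify the three assertions by exploiting the fact that the adjoint action of a finite-dimensional subHopf algebra on itself stays inside that subHopf algebra. To obtain $\lambda$, note that $K'+L$ is a finite-dimensional subspace of $H$, so Lemma~\ref{lem:finitesubset} furnishes a $\lambda\in\Lambda$ with $K'+L\subseteq H(\lambda)$; thus $H(\lambda)$ contains both $K'$ and $L$, and since $K'\subseteq K$ we also get $K'\subseteq K\cap H(\lambda)$.

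Next I would check that $K\cap H(\lambda)$ is a subHopf algebra, which is needed even to make sense of its being normalised. Closure under multiplication, unit, counit and antipode is immediate from the corresponding closure for $K$ and for $H(\lambda)$; the one point requiring care---and the step I expect to be the main obstacle---is closure under the coproduct. Here one has $\psi(K\cap H(\lambda))\subseteq(K\otimes K)\cap(H(\lambda)\otimes H(\lambda))$, and I would invoke the standard identity $(U_1\otimes V_1)\cap(U_2\otimes V_2)=(U_1\cap U_2)\otimes(V_1\cap V_2)$ for subspaces to rewrite the right-hand side as $(K\cap H(\lambda))\otimes(K\cap H(\lambda))$, giving the required coproduct closure.

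Finally I would establish normalisation and assemble the product. The key local observation is that for $h,x\in H(\lambda)$ the element $h\cdot x=\sum_i h'_i x\,\bar{h_i''}$ again lies in $H(\lambda)$, since $H(\lambda)$ is closed under $\psi$, $\chi$ and multiplication, and symmetrically for $x\cdot h$. Applying this with $h\in L\subseteq H(\lambda)$ and $x\in K\cap H(\lambda)$ gives $h\cdot x\in H(\lambda)$, while $h\cdot x\in K$ because $K$ is normalised by $L$; hence $h\cdot x\in K\cap H(\lambda)$, and likewise $x\cdot h\in K\cap H(\lambda)$, so $K\cap H(\lambda)$ is normalised by $L$. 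Since $K\cap H(\lambda)$ is now a subHopf algebra normalised by the subHopf algebra $L$, Proposition~\ref{prop:semidirectprodHA} yields $L(K\cap H(\lambda))=(K\cap H(\lambda))L$ as a subHopf algebra of $H$. It lies in $H(\lambda)$ because both factors do and $H(\lambda)$ is multiplicatively closed, it is finite-dimensional as a subspace of $H(\lambda)$, and it contains both $L$ and $K'$ since each factor contains the unit and $K'\subseteq K\cap H(\lambda)$, completing the argument.
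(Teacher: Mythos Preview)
Your proposal is correct and follows exactly the approach the paper sketches: the paper's proof is the single sentence ``This is a straightforward consequence of Lemma~\ref{lem:finitesubset} and Proposition~\ref{prop:semidirectprodHA}'', and you have supplied precisely the details that line suppresses, namely the choice of~$\lambda$ via Lemma~\ref{lem:finitesubset}, the verification that $K\cap H(\lambda)$ is a subHopf algebra normalised by~$L$, and the appeal to Proposition~\ref{prop:semidirectprodHA}. Your handling of coproduct closure via the subspace identity $(K\otimes K)\cap(H(\lambda)\otimes H(\lambda))=(K\cap H(\lambda))\otimes(K\cap H(\lambda))$ is the right way to make that step rigorous.
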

\begin{proof}
This is a straightforward consequence
of Lemma~\ref{lem:finitesubset} and
Proposition~\ref{prop:semidirectprodHA}.
\end{proof}

\subsection*{Locally finite $H$-modules and $H^\circ$-comodules}
For a locally Frobenius Hopf algebra $H$ the
finite dual $H^\circ$ is also a Hopf algebra.
It is standard that the categories of locally
finite $H$-modules $\Mod_H^{\mathrm{l.f.}}$
and left $H^\circ$-comodules $\Comod_{H^\circ}$
are equivalent (in fact isomorphic). The latter
category has as injectives summands of extended
comodules $H^\circ\otimes W$, so when viewed as
locally finite $H$-modules these are the injectives
of~$\Mod_H^{\mathrm{l.f.}}$. It is also well known
that $\Comod_{H^\circ}$ and $\Mod_H^{\mathrm{l.f.}}$
lack projectives so only the right derived functors
of
\[
\Hom_H(M,-)\iso\Cohom_{H^\circ}(M,-)=\Comod_{H^\circ}(M,-)
\]
are defined and $\Coext_{H^\circ}(-,-)$ is not
a balanced functor. We study the graded analogue
of this for $P$-algebras in~\cite{AB:Palgebras}.

Of course the existence of injectives makes it
possible to define a stable module category
using these. The tensor product of two locally
finite $H$-modules is also locally finite so
$\Mod_H^{\mathrm{l.f.}}$ has a monoidal
structure and this passes to the stable module
category since for any $H^\circ$-comodule~$M$,
$H^\circ\otimes M$ is isomorphic to an extended
comodule.

%

\section{Some examples of locally Frobenius
Hopf algebras}\label{sec:Examples}

In this section we decribe some examples
which occur in the literature whose
understanding might be aided by viewing
them as locally Frobenius algebras but
we leave detailed investigation for
future work.

\subsection{Group algebras of locally finite groups}
\label{subsec:LocFinGpAlg}

Recall that a countable group $G$ is \emph{locally
finite} if every finite subset $S\subseteq G$
is contained in a finite subgroup. The group
algebras of such groups have been studied in
the literature, for example in the work of
Hartley, Richardson and Musson
~\cite{BH&JSR:SocleGpRngs,JSR:GpRngsnonzerosocle,IMM:InjModGpAlgLOcFinGps}.
\begin{prop}\label{prop:LocFinGp}
Let\/ $\k$ be a field and $G$ a locally
finite group. Then\/~$\k G$ is a locally
Frobenius Hopf algebra.
\end{prop}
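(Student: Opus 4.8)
The plan is to apply Proposition~\ref{prop:LOcFrobHopfAlg}, which reduces the claim to producing a $\Lambda$-directed system of finite dimensional subHopf algebras of $\k G$, each involutive and unimodular, with union all of $\k G$ and with $\k$ as the initial term. The natural choice of shape is the poset $\Lambda$ of finite subgroups of $G$ ordered by inclusion, with initial element the trivial subgroup $\{e\}$, and I would set $H(F)=\k F$ for each finite subgroup $F\leq G$. Since Proposition~\ref{prop:LOcFrobHopfAlg} already supplies the Frobenius, symmetric and freeness conditions (via Larson--Sweedler, Farnsteiner and Nichols--Zoeller), nothing further about the extensions needs to be checked by hand.

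First I would verify that $\Lambda$ has the shape required in Section~\ref{sec:LocFrobAlg}. Local finiteness is precisely what gives the filtering property: for finite subgroups $F',F''$ the finite set $F'\cup F''$ lies in some finite subgroup $F$, which is then a common upper bound. The trivial subgroup is the unique minimal element, and $\k\{e\}=\k$. Assuming $G$ is infinite---so that $\k G$ is infinite dimensional, as a locally Frobenius algebra must be---there are no maximal finite subgroups: given a finite $F$ and any $g\in G\setminus F$, local finiteness applied to $F\cup\{g\}$ produces a finite subgroup strictly containing $F$. The inclusions $\k F'\hookrightarrow\k F''$ for $F'\leq F''$ are maps of subHopf algebras, and every element of $\k G$ is a finite $\k$-linear combination of group elements, which together generate a finite subgroup; hence $\k G=\bigcup_F\k F$.

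It then remains to check condition~(b) of Proposition~\ref{prop:LOcFrobHopfAlg} for each $\k F$. Involutivity is immediate because a group algebra is cocommutative, with antipode $g\mapsto g^{-1}$, which squares to the identity. For unimodularity I would use the element $\sum_{g\in F}g$: it satisfies $h\sum_{g\in F}g=\sum_{g\in F}g=\bigl(\sum_{g\in F}g\bigr)h$ for every $h\in F$, and conversely any left integral must be fixed by left multiplication by each group element and so is a scalar multiple of $\sum_{g\in F}g$, with the symmetric statement on the right; thus $\lint_{\k F}=\rint_{\k F}=\k\cdot\sum_{g\in F}g$ and $\k F$ is unimodular.

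With both hypotheses of Proposition~\ref{prop:LOcFrobHopfAlg} verified, that proposition yields that $\k G$ is a locally Frobenius Hopf algebra of shape~$\Lambda$. No step presents a real obstacle; the only point needing care is the standing assumption that $G$ is infinite, which is implicit in the statement and is exactly what guarantees that $\Lambda$ has no maximal element and hence that $\k G$ is genuinely infinite dimensional.
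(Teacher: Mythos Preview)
Your proof is correct and takes a slightly different route from the paper's own argument. You invoke Proposition~\ref{prop:LOcFrobHopfAlg} and then verify its hypotheses: the shape conditions on the poset of finite subgroups, involutivity (via cocommutativity), and unimodularity (via the two-sided integral $\sum_{g\in F}g$). The paper instead verifies Definition~\ref{defn:LocFrobAlg} more directly: after the same choice of $\Lambda$ and the same argument that finite dimensional subspaces land in some $\k H$, it checks the Frobenius extension condition for $\k K:\k H$ by hand, writing the integral of $\k K$ as a $\k K$-multiple of the integral of $\k H$ using coset representatives and appealing to the criterion in Lorenz~\cite{ML:TourRepThy}*{12.4.1(a)}.

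Your approach is cleaner and more in keeping with the machinery the paper has set up: Proposition~\ref{prop:LOcFrobHopfAlg} was introduced precisely so that Larson--Sweedler, Farnsteiner and Nichols--Zoeller handle the symmetric Frobenius and freeness conditions uniformly, and you exploit this. The paper's argument, by contrast, gives an explicit and elementary verification of the Frobenius extension property in this particular case, independent of those general Hopf algebra theorems; it tacitly relies on the reader knowing that group algebras are symmetric Frobenius and that $\k K$ is free over $\k H$ via coset representatives. Both arguments compute the same integral $\sum_{g\in F}g$, but use it for different purposes.
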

\begin{proof}
We take the indexing set $\lambda$ to be
the set of finite subgroups ordered by
inclusion and for each $H\in\lambda$ take
the finite group algebras~$\k H$ which is
a subHopf algebra of~$\k G$.

A finite dimensional subspace $V\subseteq\k G$
has a basis whose elements are expressible
as linear combinations of finitely many
elements of~$G$, therefore it must be
contained in some finite subgroup~$H\leq G$,
so~$V\subseteq\k H$.

For two finite subgroups $H\leq K\leq G$,
there are left and right integrals
\[
\sum_{h\in H}h\in\sint_{\k H},
\quad
\sum_{k\in K}h\in\sint_{\k K}.
\]
Then taking a complete set of right
coset representatives
$k_1,\ldots,k_{|K:H|}$ for $K/H$ we
have
\[
\biggl(\k_1+\cdots+k_{|K:H|}\biggr)
\sum_{h\in H}h
= \sum_{k\in K}k,
\]
so $\sint_{\k K}\subseteq\k K\sint_{\k H}$.
The condition of
Lorenz~\cite{ML:TourRepThy}*{~12.4.1(a)}
shows that $\k K:\k H$ is a Frobenius
extension.
\end{proof}

\subsection{Dual profinite group algebras}
\label{subsec:ProfiniteGp}

Let $G$ be a profinite group,
\[
G = \lim_{\substack{N\lhd G\\|G:N|<\infty}}G/N.
\]
For a field $\k$ the pro-group algebra
\[
\k G = \lim_{\substack{N\lhd G\\|G:N|<\infty}}\k G/N
\]
is a complete topological Hopf algebra.
However, there is also dual object
\[
\k(G) =
\colim_{\substack{N\lhd G\\|G:N|<\infty}}
\k(G/N)
\]
where
\[
\k(G/N) = \Map(G/N,\k) \iso \Hom_\k(\k G/N,\k)
\]
is the dual group ring. Alternatively,~$\k(G)$
agrees with the algebra of locally constant
(continuous) functions $G\to\k$ with respect
to the profinite topology. Since each $\k(G/N)$
is a commutative Hopf algebra so is~$\k(G)$.
In fact, $\k(G)$ also agrees with the finite
dual of~$\k G$, commonly denoted by
$(\k G)^{\mathrm{o}}$.

\begin{prop}\label{prop:ProfinGp}
For a profinite group $G$,\/~$\k(G)$ is a
commutative locally Frobenius Hopf algebra.
\end{prop}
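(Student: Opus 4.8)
The plan is to verify the hypotheses of Proposition~\ref{prop:LOcFrobHopfAlg} directly, so that the Frobenius extension and freeness conditions come for free via the Larson--Sweedler and Nichols--Zoeller theorems invoked there. Take the index set $\Lambda$ to be the collection of open normal subgroups $N\lhd G$ of finite index, ordered by reverse inclusion, i.e.\ $N\preccurlyeq N'$ iff $N\supseteq N'$. This is filtered: given $N_1,N_2$ the intersection $N_1\cap N_2$ is again open, normal and of finite index and is an upper bound. The initial element is $\lambda_0=G$ itself, and (assuming $G$ is infinite, which is exactly the case in which $\k(G)$ is infinite dimensional and the statement has content) every $N$ is properly contained in a smaller open normal finite-index subgroup, so $\Lambda$ has no maximal element. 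For each such $N$ set
\[
H(N)=\k(G/N)=\Map(G/N,\k)\iso\Hom_\k(\k[G/N],\k),
\]
the linear dual of the finite-dimensional cocommutative Hopf algebra $\k[G/N]$, hence itself a finite-dimensional commutative Hopf algebra.

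First I would check that $\{H(N)\}_{N\in\Lambda}$ is a $\Lambda$-directed system of subHopf algebras of $\k(G)$ with union $\k(G)$. Whenever $N\supseteq N'$ the quotient $G/N'\twoheadrightarrow G/N$ induces, by pullback of functions, an injective homomorphism $\k(G/N)\hookrightarrow\k(G/N')$, which is a morphism of Hopf algebras since it is dual to the surjection of group algebras $\k[G/N']\twoheadrightarrow\k[G/N]$. These are precisely the structure maps of the defining colimit, so $\k(G)=\colim_N\k(G/N)=\bigcup_{N}H(N)$ realises each $H(N)$ as a subHopf algebra, and $H(\lambda_0)=\k(G/G)=\k$ as required in Proposition~\ref{prop:LOcFrobHopfAlg}(a).

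It remains to verify condition (b): each $H(N)$ is involutive and unimodular. Involutivity is immediate from commutativity of $\k(G/N)$ (as noted just after Proposition~\ref{prop:LOcFrobHopfAlg}, a commutative or cocommutative Hopf algebra has $\chi^2=\id$; concretely the antipode is $\chi(f)(x)=f(x^{-1})$, so $\chi^2=\id$). Unimodularity is likewise automatic: $\k(G/N)$ is a commutative augmented Frobenius algebra, so for $y$ and $h$ the conditions $yh=\epsilon(y)h$ and $hy=\epsilon(y)h$ coincide, giving $\lint_{H(N)}=\rint_{H(N)}$. With (a) and (b) in hand, Proposition~\ref{prop:LOcFrobHopfAlg} shows $\k(G)$ is a locally Frobenius Hopf algebra, and commutativity of $\k(G)$ is inherited from the commutative pieces $H(N)$.

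The substance here is bookkeeping rather than any deep obstacle: the one point needing care is the direction of the order on $\Lambda$ (reverse inclusion of subgroups, so that shrinking $N$ enlarges the function algebra) and the tacit hypothesis that $G$ is infinite, which is what guarantees the strict increase of the $H(N)$ and hence the infinite-dimensionality demanded by Definition~\ref{defn:LocFrobAlg}. The freeness of each extension $\k(G/N'):\k(G/N)$ — geometrically the $N/N'$-bundle $G/N'\to G/N$ — need not be checked by hand, as it is supplied by Nichols--Zoeller within the proof of Proposition~\ref{prop:LOcFrobHopfAlg}.
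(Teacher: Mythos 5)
Your proposal is correct, but it takes a different route from the paper. You verify the hypotheses of Proposition~\ref{prop:LOcFrobHopfAlg} and let the general machinery (Larson--Sweedler, Farnsteiner's symmetry criterion, Nichols--Zoeller) do the work: since each $\k(G/N)$ is commutative, involutivity and unimodularity are both automatic, so condition (b) is essentially free, and the only real content is the bookkeeping of the directed system of open normal finite-index subgroups under reverse inclusion. The paper instead verifies the Frobenius extension condition for each pair $\k(G/M):\k(G/N)$ directly: it exhibits the explicit integrals $\delta_{1_{G/M}}$ and $\delta_{1_{G/N}}$, computes $\pi^*\delta_{1_{G/N}}=\sum_{gM\in\ker\pi}\delta_{gM}$ and $\delta_{1_{G/M}}\,\pi^*\delta_{1_{G/N}}=\delta_{1_{G/M}}$, and then appeals to the criterion of Lorenz~\cite{ML:TourRepThy}*{12.4.1(a)}, exactly parallel to its treatment of locally finite group algebras in Proposition~\ref{prop:LocFinGp}. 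Both arguments are sound; yours is shorter and makes clear that commutativity trivialises the hypotheses, while the paper's is more self-contained and produces concrete integrals that mirror the group-algebra case. Your explicit remarks about the direction of the order on $\Lambda$ and the tacit assumption that $G$ is infinite (needed for the proper inclusions and infinite-dimensionality required by Definition~\ref{defn:LocFrobAlg}) are points the paper passes over silently, and are worth having.
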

\begin{proof}
We need to check that for cofinite normal
subgroups~$M\lhd G$ and~$N\lhd G$ with~$M\lhd N$,
$\k(G/M):\k(G/N)$ is a Frobenius extension.

The functions $\delta_{1_{G/M}}\in\k(G/M)$
and $\delta_{1_{G/N}}\in\k(G/N)$ with
\begin{align*}
\delta_{1_{G/M}}(gM)
&=
\begin{dcases*}
1 & if $g\in M$, \\
0 & otherwise,
\end{dcases*}    \\
\delta_{1_{G/N}}(gN)
&=
\begin{dcases*}
1 & if $g\in N$, \\
0 & otherwise,
\end{dcases*}
\end{align*}
are integrals for $\k(G/M)$ and $\k(G/N)$.
The quotient homomorphism $\pi\:G/M\to G/N$
satisfies
\[
\pi^*\delta_{1_{G/N}} =
\sum_{gM\in\ker\pi}\delta_{gM}
\]
and
\[
\delta_{1_{G/M}}\pi^*\delta_{1_{G/N}}
= \delta_{1_{G/M}},
\]
so the condition of Lorenz~\cite{ML:TourRepThy}*{~12.4.1(a)}
shows that $\k(G/M):\k(G/N)$ is a Frobenius
extension.
%
\end{proof}

\begin{bibdiv}
\begin{biblist}

\bib{AB:Palgebras}{article}{
    author={Baker, A.},
    title={On $P$-algebras and their duals},
    date={2022},
    eprint={arXiv:2205.09541},
}

%

\bib{Bourbaki:HomAlg}{book}{
   author={Bourbaki, N.},
   title={\'El\'ements de Math\'ematique:
   Alg\`ebre, chapitre~\emph{10} --
   Alg\`ebre Homologique},
   publisher={Masson},
   date={1980},
}

%
%
%

\bib{JMC:Coherent}{article}{
   author={Cohen, J. M.},
   title={Coherent graded rings and the
   non-existence of spaces of finite
   stable homotopy type},
   journal={Comment. Math. Helv.},
   volume={44},
   date={1969},
   pages={217\ndash228},
}


\bib{SE&JCM:RelHomAlg}{article}{
   author={Eilenberg, S.},
   author={Moore, J. C.},
   title={Foundations of Relative Homological
   Algebra},
   journal={Mem. Amer. Math. Soc.},
   volume={55},
   date={1965},
}

\bib{RF:FrobExtnHopfAlgs}{article}{
   author={Farnsteiner, R.},
   title={On Frobenius extensions defined by
   Hopf algebras},
   journal={J. Algebra},
   volume={166},
   date={1994},
   number={1},
   pages={130\ndash141},
}

\bib{DF&SM&H-JS:FrobExtns}{article}{
   author={Fischman, D.},
   author={Montgomery, S.},
   author={Schneider, H-J.},
   title={Frobenius extensions of
   subalgebras of Hopf algebras},
   journal={Trans. Amer. Math. Soc.},
   volume={349},
   date={1997},
   pages={4857\ndash4895},
}


\bib{BH&JSR:SocleGpRngs}{article}{
   author={Hartley, B.},
   author={Richardson, J. S.},
   title={The socle in group rings},
   journal={J. London Math. Soc. (2)},
   volume={15},
   date={1977},
   pages={51\ndash54},
}


\bib{LNM254}{book}{
   author={Jensen, C. U.},
   title={Les foncteurs d\'{e}riv\'{e}s
   de\/ $\varprojlim$ et leurs applications
   en th\'{e}orie des modules},
   series={Lect. Notes in Math.},
   date={1972},
   volume={254},
}

\bib{IK:ProjMod}{article}{
   author={Kaplansky, I.},
   title={Projective modules},
   journal={Ann. of Math (2)},
   volume={68},
   date={1958},
   pages={372\ndash377},
}
		
\bib{FK:ProjFrobExtns}{article}{
   author={Kasch, F.},
   title={Projektive Frobenius-Erweiterungen},
   language={German},
   journal={S.-B. Heidelberger Akad. Wiss. Math.-Nat. Kl.},
   volume={1960/61},
   date={1960/1961},
   pages={87\ndash109},
}


\bib{TYL:NonCommRings}{book}{
   author={Lam, T. Y.},
   title={A First Course in Noncommutative Rings},
   series={Graduate Texts in Mathematics},
   volume={131},
   edition={2},
   publisher={Springer-Verlag},
   date={2001},
}
	
\bib{TYL:LectModules&Rings}{book}{
   author={Lam, T. Y.},
   title={Lectures on Modules and Rings},
   series={Graduate Texts in Mathematics},
   volume={189},
   publisher={Springer-Verlag},
   date={1999},
}


\bib{LarsonSweedlerThm}{article}{
   author={Larson, R. G.},
   author={Sweedler, M. E.},
   title={An associative orthogonal bilinear
   form for Hopf algebras},
   journal={Amer. J. Math.},
   volume={91},
   date={1969},
   pages={75\ndash94},
}


\bib{ML:TourRepThy}{book}{
   author={Lorenz, M.},
   title={A Tour of Representation Theory},
   series={Graduate Studies in Mathematics},
   volume={193},
   publisher={American Mathematical Society},
   date={2018},
}
		
\bib{HRM:Book}{book}{
   author={Margolis, H. R.},
   title={Spectra and the Steenrod Algebra:
   Modules over the Steenrod algebra and
   the stable homotopy category},
   publisher={North-Holland},
   date={1983},
}


%

\bib{SM:HopfAlgActions}{book}{
   author={Montgomery, S.},
   title={Hopf Algebras and their Actions on Rings},
   series={CBMS Regional Conference Series in Mathematics},
   volume={82},
   date={1993},
}

\bib{JCM&FPP:NearlyFrobAlg}{article}{
   author={Moore, J. C.},
   author={Peterson, F. P.},
   title={Nearly Frobenius algebras, Poincar\'{e}
   algebras and their modules},
   journal={J. Pure Appl. Algebra},
   volume={3},
   date={1973},
   pages={83\ndash93},
}

\bib{IMM:InjModGpAlgLOcFinGps}{article}{
   author={Musson, I. M.},
   title={Injective modules for group
   algebras of locally finite groups},
   journal={Math. Proc. Cambridge Philos. Soc.},
   volume={84},
   date={1978},
   pages={247\ndash262},
}

\bib{WDN&MBZ:HAfreeness}{article}{
   author={Nichols, W. D.},
   author={Zoeller, M. B.},
   title={A Hopf algebra freeness theorem},
   journal={Amer. J. Math.},
   volume={111},
   date={1989},
   pages={381\ndash385},
}

\bib{WDN&MBR:HAfreenessinfdim}{article}{
   author={Nichols, W. D.},
   author={Richmond, M. B.},
   title={Freeness of infinite-dimensional
   Hopf algebras},
   journal={Comm. Algebra},
   volume={20},
   date={1992},
   pages={1489\ndash1492},
}

\bib{DER:HAbook}{book}{
   author={Radford, D. E.},
   title={Hopf Algebras},
   series={Series on Knots and Everything},
   volume={49},
   publisher={World Scientific Publishing},
   date={2012},
}

\bib{JSR:GpRngsnonzerosocle}{article}{
   author={Richardson, J. S.},
   title={Group rings with non-zero socle},
   journal={Proc. London Math. Soc. (3)},
   volume={35},
   date={1977},
   number={3},
   pages={385\ndash406},
}

\bib{H-JS:RemQGps}{article}{
   author={Schneider, H-J.},
   title={Some remarks on exact sequences
   of quantum groups},
   journal={Comm. Algebra},
   volume={21},
   date={1993},
   pages={3337\ndash3357},
}

\bib{CAW:HomAlg}{book}{
   author={Weibel, C. A.},
   title={An Introduction to Homological Algebra},
   series={Cambridge Studies in Adv. Math.},
   volume={38},
   publisher={Cambridge University Press},
   date={1994},
}

\bib{CAW:Ktheory}{book}{
   author={Weibel, C. A.},
   title={The $K$-book: An Introduction to
   Algebraic $K$-theory},
   series={Grad. Studies in Math.},
   volume={145},
   publisher={Amer. Math. Soc.},
   date={2013},
}

	
\end{biblist}
\end{bibdiv}

\end{document}